\documentclass[11pt, reqno]{amsart}
\usepackage{a4wide}
\usepackage{wrapfig}
\usepackage{amsmath}
\usepackage{amsthm}
\usepackage{mathrsfs}
\usepackage{color}
\usepackage{xcolor}
\usepackage{graphicx}
\usepackage{amssymb}
\usepackage{ dsfont }
\usepackage{url}
\usepackage{multicol}
\usepackage{tikz}
\usepackage{amsmath}
\usepackage{fancyhdr}
\usetikzlibrary{matrix}
\usetikzlibrary{arrows}
\usepackage{subfig}
\usepackage{hyperref}
\usepackage{listings}

\usepackage{esint}

\usepackage{enumitem}
\usepackage{varwidth}
\allowdisplaybreaks

\title[Orbital stability and instability of periodic wave solutions for the $\phi^4$-model]{Orbital stability and instability of periodic wave solutions for the $\phi^4$-model}

\author[J. M. Palacios]{Jos\'e Manuel Palacios}
\address{Institut Denis Poisson, Universit\'e de Tours, Universit\'e d'Orleans, CNRS, Parc Grandmont 37200, Tours, France}
\email{jose.palacios@lmpt.univ-tours.fr}

\newcommand{\be}{\begin{equation}}
\newcommand{\ee}{\end{equation}}
\newcommand{\bp}{\begin{proof}}
\newcommand{\ep}{\end{proof}}
\newcommand{\bel}{\begin{equation}\label}
\newcommand{\eeq}{\end{equation}}
\newcommand{\bea}{\begin{eqnarray}}
\newcommand{\eea}{\end{eqnarray}}
\newcommand{\bee}{\begin{eqnarray*}}
\newcommand{\eee}{\end{eqnarray*}}
\newcommand{\ben}{\begin{enumerate}}
\newcommand{\een}{\end{enumerate}}

\newcommand{\R}{\mathbb{R}}

\newcommand{\Z}{\mathbb{Z}}
\newcommand{\T}{\mathbb{T}}

\newtheorem{thm}{Theorem}[section]
\newtheorem{cor}[thm]{Corollary}
\newtheorem{lem}[thm]{Lemma}
\newtheorem{prop}[thm]{Proposition}

\theoremstyle{remark}
\newtheorem{rem}{Remark}[section]

\definecolor{codegreen}{rgb}{0,0.6,0}
\definecolor{codegray}{rgb}{0.5,0.5,0.5}
\definecolor{codepurple}{rgb}{0.58,0,0.82}
\definecolor{backcolour}{rgb}{0.95,0.95,0.92}

\lstdefinestyle{mystyle}{
	backgroundcolor=\color{backcolour},   
	commentstyle=\color{codegreen},
	keywordstyle=\color{magenta},
	numberstyle=\tiny\color{codegray},
	stringstyle=\color{codepurple},
	basicstyle=\footnotesize,
	breakatwhitespace=false,         
	breaklines=true,                 
	captionpos=b,                    
	keepspaces=true,                 
	numbers=left,                    
	numbersep=5pt,                  
	showspaces=false,                
	showstringspaces=false,
	showtabs=false,                  
	tabsize=2
}
\lstset{style=mystyle}

\numberwithin{equation}{section}

\usepackage{pgfplots}
\pgfplotsset{compat=newest}




\setlength{\parindent}{0pt}

\makeatletter

\makeatother

\theoremstyle{definition}

\numberwithin{ej}{section}



\begin{document}





\renewcommand{\sectionmark}[1]{\markright{\thesection.\ #1}}
\renewcommand{\headrulewidth}{0.5pt}
\renewcommand{\footrulewidth}{0.5pt}
\begin{abstract}
In this work we find explicit periodic wave solutions for the classical $\phi^4$-model, and study their corresponding orbital stability/instability in the energy space. In particular, for this model we find at least four different branches of spatially-periodic wave solutions, which can be written in terms of Jacobi elliptic functions. Two of these branches corresponds to \emph{superluminal} waves, a third-one corresponding to a \emph{sub-luminal} wave and the remaining one corresponding to a stationary complex-valued wave. In this work we prove the orbital instability of real-valued sub-luminal traveling waves. Furthermore, we prove that under some additional hypothesis, complex-valued stationary waves as well as the real-valued zero-speed sub-luminal wave are all stable. This latter case is related (in some sense) to the classical Kink solution.
\end{abstract}
\maketitle 
\section{Introduction}

\subsection{The model} This paper is concerned with the stability properties of traveling/standing wave solutions to the $1+1$ dimensional $\phi^4$ equation on the torus \begin{align}\label{phif}
\partial_{t}^2\phi-\partial_x^2\phi=\phi-\vert\phi\vert^2\phi, \quad t\in\R,\  x\in\T_L:=\R/L\Z.
\end{align}
Here, $\phi(t,x)$ denotes a scalar $L$-periodic function with values either in $\R$ or $\mathbb{C}$. This equation arises in Quantum Field Theory as a model for self-interactions of scalar fields (represented by $\phi$) and is one of the simplest examples where to apply Feynman diagram techniques to do perturbative analysis in quantum theory. Moreover, equation \eqref{phif} has also been derived as a simple continuum model of lightly doped polyacetylene \cite{Ri}. We refer the interested reader to \cite{MaPa,PeSc,Va} for some other physical motivations. 

\medskip

Equation \eqref{phif} can be understood as a particular case of the general family of nonlinear Klein-Gordon equations: \begin{align}\label{klein}
\partial_t^2\phi-\partial_x^2\phi=m\phi+f(\phi),
\end{align}
where $m\in\R$ and $f:\R\to\R$ denotes the nonlinearity. Many important nonlinear models can be recovered as particular cases of this latter equation, such as the $\phi^6$ and the sine-Gordon equations. Of course, different signs for the right-hand side will produce completely different dynamics of the solutions. However, under rather general assumptions, it is still possible to obtain some stability results for model \eqref{klein}. We refer the reader to \cite{De,KMM2} for a fairly general theory for small solution to equation \eqref{klein} and to \cite{LiSo,St} for studies of the long time asymptotics for some generalizations of equation \eqref{phif} with variable coefficients.

\medskip

On the other hand, since \eqref{phif} corresponds to a wave-like equation, it can be rewritten in the standard form as a first order system for $\vec{\phi}=(\phi_1,\phi_2)$ as \begin{align}\label{phif_2}
\begin{cases}
\partial_t\phi_1=\phi_2,
\\ \partial_t\phi_2=\partial_x^2\phi_1+\phi_1-\vert\phi_1\vert^2\phi_1.
\end{cases}
\end{align}
Moreover, from the Hamiltonian structure of the equation it follows that, at least formally, the energy of system \eqref{phif_2} is conserved along the trajectory, that is,
\begin{align}\label{energy} 
\mathcal{E}(\vec{\phi}(t))&:=\dfrac{1}{2}\int_0^L \big(\vert \phi_2\vert^2+\vert\phi_{1,x}\vert^2+\dfrac{1}{2}(1-\vert\phi_1\vert^2)^2\big)(t,x)dx=\mathcal{E}(\vec{\phi}_0).
\end{align}
In the real-valued case, the conservation of momentum shall also play a fundamental role for our current purposes, which is given by:
\begin{align}\label{momentum}
\mathcal{P}(\vec{\phi}(t)):=\int_0^L \phi_2(t,x)\phi_{1,x}(t,x)dx=\mathcal{P}(\vec{\phi}_0).
\end{align}
In the complex-valued case, we have the following additional conservation law which shall be key in our analysis:
\begin{align}\label{niE_nim}
\mathcal{F}\big(\vec{\phi}(t)\big):=\mathrm{Im}\int_0^L \overline{\phi}_1(t,x)\phi_2(t,x)dx =\mathcal{F}\big(\vec{\phi}_0\big),
\end{align}
where $\mathrm{Im}(\cdot)$ stands for the imaginary part of a complex number. We point out that from these conservation laws it follows that $H^1(\T_L)\times L^2(\T_L)$ defines the natural energy space associated to system \eqref{phif_2}.

\medskip

Additionally, equation \eqref{phif} is known for satisfying several symmetries. Among the most important ones we have the invariance under space and time translations. We point out that in the aperiodic setting there is an extra invariance, the so-called  Lorentz boost, that means, if $\vec{\phi}(t,x)$ is a solution to the equation, then so is \[
\vec{\varphi}(t,x):=\vec{\phi}\big(\gamma(t-\beta x),\gamma(x-\beta t)\big) \quad \hbox{where}\quad \gamma:= \sqrt{1-\beta^2} \quad \hbox{and}\quad \beta\in(-1,1). 
\]
However, notice that this transformation does not let the period fixed, and hence it is not and invariance of the equation in our current setting.

\medskip

On the other hand, two of the most important objects in nonlinear dynamics are traveling and standing wave solutions, particularly in the context of dispersive PDEs due to the so-called \emph{soliton conjecture}. The existence and (if the case) the corresponding orbital stability of such type of solutions has become a fundamental issue in the area. In this regard, we prove the existence of at least three different branches of traveling wave solutions to equation \eqref{phif} in the periodic setting, as well as one branch of standing wave solutions, which, up to the best of our knowledge, were not known for equation \eqref{phif} until now. These solutions are formally given by \begin{align}\label{rof_def}
\widetilde{\beta}_1\mathrm{dn}\big(\ell_1(x-ct);\kappa_1\big), \qquad &\qquad  \widetilde{\beta_2}\mathrm{cn}\big(\ell_2(x-ct);\kappa_2\big),
\\   \widetilde{\beta}_3\mathrm{sn}\big(\ell_3(x-ct);\kappa_3\big),\qquad &\qquad  \widetilde{\beta}_4e^{ict}\mathrm{sn}\big(\ell_4 x;\kappa_4\big),\label{rof_def_2}
\end{align}
where $\mathrm{dn}(\cdot;\cdot)$, $\mathrm{cn}(\cdot;\cdot)$ and $\mathrm{sn}(\cdot;\cdot)$ denotes the standard Jacobi elliptic functions (see Section \ref{preliminaries} for their  complete definition), and $\beta_i$, $\ell_i$ and $\kappa_i$ are positive parameters satisfying some specific relations where not all combinations are allow. See Propositions \ref{MT_DN_CURVE}, \ref{MT_CN_CURVE}, \ref{MT_SN_CURVE},  \ref{MT_SN_COMPLEX} for the specific definitions of each case respectively.

\medskip

One of the key points in our analysis is the use of classical results of Grillakis-Shatah-Strauss (see \cite{GSS,GSS2}) which set a general framework to study the orbital stability/instability for both traveling and standing wave solutions. These general results are based on the spectral information of the linearize Hamiltonian around these specific type of solutions. Thereby, it is worth to notice that, in the real-valued case, equation \eqref{phif_2} can be rewritten in the abstract Hamiltonian form as \[
\partial_t\vec\phi=\mathbf{J}\mathcal{E}'(\vec\phi) \quad \hbox{where} \quad \mathbf{J}:=\left(\begin{matrix}
0 & 1 \\ -1 & 0 
\end{matrix}\right),
\]
where $\mathcal{E}'$ denotes the Frechet derivative of the energy functional $\mathcal{E}$ (see Section \ref{os_c} for the analogous expression that we shall use for the complex valued case).

\medskip

Regarding the well-posedness of the equation, we recall that by applying classical Kato theory for quasilinear equations we obtain the local well-posedness in the energy space $H^1(\T_L)\times L^2(\T_L)$ of equation \eqref{phif} (see \cite{Kato}).  We refer the reader to  \cite{De,HaNa,Kl,Kl2} for several other local and global well-posedness results in one-dimensional and higher dimensional Klein-Gordon equations.

\medskip

About orbital stability of explicit solutions to equations \eqref{phif} and \eqref{klein}, there exists a vast literature regarding the aperiodic case. We refer the reader to \cite{HPW} for a classical and rather general result about the orbital stability of Kink solutions, and to \cite{KMM} for the first result regarding asymptotic stability of Kink solutions for equation \eqref{phif} (see also \cite{AlMuPa3} for a recent work in this direction). We also refer to \cite{Cu} for an study of the asymptotic stability properties of these solutions in dimension $3$. For the case of standing wave solutions $e^{ict}\phi(x)$ to Klein-Gordon equations, we refer to \cite{ShSt,ShSt2} for classical result in this setting. Nevertheless, for the periodic case there are only a few well-known results. We refer the reader to \cite{AnNa2,NaCa,NaPa} for the treatment of periodic solutions for a specific type of Klein-Gordon equations, when the sign of the right-hand side is the opposite one. Specifically, the first two of these works considers the stability problem of periodic solutions with $-\phi+\vert \phi\vert^4\phi $ as right-hand side in \eqref{phif}, while the third one considers $+\vert\phi\vert^2\phi$ and $-\phi+\vert\phi\vert^2\phi$ as right-hand sides. We point out that \eqref{phif} does not fit in any of these settings. We also refer the reader to \cite{AnNa} for the first work applying these type of techniques to obtain the main spectral information needed in our analysis. Regarding orbital stability of periodic wavetrains, we refer the reader to \cite{JoMaMiPl}. We remark that this latter result seems to be the first one (up to the best of our knowledge) for wavetrains in the periodic case (see also \cite{JoMaMiPl3}). On the other hand, we refer to \cite{DeMc,JoMaMiPl2} for orbital stability results in a particularly interesting Klein-Gordon setting (but different from the previous-ones), the so-called sine-Gordon equation. We point out that in these two works the authors also treat the case of \emph{superluminal waves}, case that we do not treat in this work. About stability of periodic traveling waves in Hamiltonian equations that are first-order in time, we refer to \cite{DeUp} for stability results for the nonlinear Schr\"odinger equation and to \cite{An,DeKa,DeNi} for the KdV and mKdV settings. Finally, we refer the reader to \cite{AlMuPa} for an stability study for more complex periodic structures that do not fit into the framework of Grillakis \emph{et al.} \cite{GSS}, such as spatiallty-periodic \emph{Breathers}. These are explicit solutions to the equation which behave as solitons but are also time-periodic. See also \cite{AlMuPa2,MP} for some stability results of aperiodic Breathers in the sine-Gordon equation.

\subsection{Main results}

In order to present our main results, let us first define what it means for a solution to be \emph{Orbitally Stable}. In the real-valued case, we say that a traveling wave solution $\vec{\varphi}_c$ is orbitally stable if for all $\varepsilon>0$ there exists $\delta>0$ small enough such that for every initial data $\vec{\phi}_0\in X$, with $X:=H^1(\T_L)\times L^2(\T_L)$, satisfying $
\Vert \vec{\phi}_0-\vec{\varphi}_c\Vert_{X}\leq\delta$, then
\[
\sup_{t\in\R}\inf_{\rho\in[0,L)}\Vert\vec{\phi}(t)-\vec{\varphi}_c(\cdot-\rho) \Vert_{X}<\varepsilon.
\]
Additionally, in the complex-valued case the latter condition has to be replaced by \[
\sup_{t\in\R}\inf_{\theta\in[0,2\pi)}\Vert\vec{\phi}(t)-e^{i\theta}\vec{\varphi}_c\Vert_{X}<\varepsilon.
\]
Otherwise, we say that $\vec{\varphi}_c$ is orbitally unstable. In particular, this latter is the case when the solution cease to exist in finite time. We point out that the differences in the previous definitions come from the differences in their orbits. While in the real valued case the orbit of $\vec\varphi_c$ is generated by the action of the translation group $T(s)u=u(\cdot-s)$, in the complex-valued case, since we are only dealing with standing wave solutions, their orbits are generated by the action of the gauge group $T(s)u=e^{is}u$ (see \eqref{rof_def} and \eqref{rof_def_2}). 

\medskip

It is worth to notice that, even when it is not explicitly said, we shall always assume that $L$ is the fundamental period of $\vec{\varphi}_c$. In particular, we are only considering perturbations with exactly the same period as our fundamental solution.

\medskip

Now, in order to avoid overly introducing new notation and definitions in this introductory section, we shall only formally state our main results.
\begin{thm}[Orbital Instability of snoidal waves: real-valued case]\label{MT3_SN}
The real-valued snoidal wave solution (see the first member of \eqref{rof_def_2}) is orbitally unstable in the energy space by the periodic flow of the $\phi^4$ equation. 
\end{thm}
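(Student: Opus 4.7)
The natural approach is the Grillakis-Shatah-Strauss (GSS) instability framework \cite{GSS,GSS2}. I would introduce the action functional $\mathcal{S}_c:=\mathcal{E}+c\mathcal{P}$, verify that $\vec{\varphi}_c=(\varphi_c,-c\varphi_c')$ with $\varphi_c(y)=\widetilde{\beta}\,\mathrm{sn}(\ell y;\kappa)$ is a critical point of $\mathcal{S}_c$ via the profile equation $(1-c^2)\varphi_c''+\varphi_c-\varphi_c^3=0$, and compute the Hessian
\[
\mathcal{H}_c := \mathcal{S}_c''(\vec{\varphi}_c) = \begin{pmatrix}
-\partial_y^2-1+3\varphi_c^2 & -c\partial_y \\ c\partial_y & 1
\end{pmatrix}
\]
on the energy space $X=H^1(\T_L)\times L^2(\T_L)$. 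Completing the square in the $u_2$-component (taking $u_2=-c\,u_{1,y}$) shows that $n(\mathcal{H}_c)=n(\mathcal{L}_c)$ and $\ker\mathcal{H}_c$ is in bijection with $\ker\mathcal{L}_c$, where $\mathcal{L}_c:=-(1-c^2)\partial_y^2-1+3\varphi_c^2$ is a scalar Schrödinger-type operator on $L^2(\T_L)$.

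Next, rescaling $z=\ell(x-ct)$ and using the profile identities $\ell^2(1-c^2)(1+\kappa^2)=1$ and $\widetilde{\beta}^{\,2}=2\kappa^2/(1+\kappa^2)$ yields
\[
\mathcal{L}_c = \tfrac{1}{1+\kappa^2}\bigl[\Lambda_2 - (1+\kappa^2)\bigr],\qquad \Lambda_2:=-\partial_z^2+6\kappa^2\mathrm{sn}^2(z;\kappa),
\]
the classical Lamé operator of degree two. Its algebraic band-edges are known explicitly; in particular
\[
\lambda_0 \;=\; 2(1+\kappa^2)-2\sqrt{1-\kappa^2+\kappa^4} \;<\; \lambda_1 \;=\; 1+\kappa^2,
\]
with periodic eigenfunctions of type $\mathrm{sn}^2-a_0$ and $\mathrm{cn}\,\mathrm{dn}$, respectively. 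Since $\varphi_c'\propto\mathrm{cn}\,\mathrm{dn}$ realises precisely the translation-kernel eigenvalue $\lambda_1$, and $\lambda_0$ is the only band-edge of $\Lambda_2$ strictly below $1+\kappa^2$ (no non-algebraic eigenvalue occurs in the first spectral gap on the torus), we conclude $n(\mathcal{H}_c)=n(\mathcal{L}_c)=1$ and $\dim\ker\mathcal{H}_c=1$, spanned by the translation mode.

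Third, I would compute the GSS scalar $d(c):=\mathcal{S}_c(\vec{\varphi}_c)$. Since $\mathcal{S}_c'(\vec{\varphi}_c)=0$, we get $d'(c)=\mathcal{P}(\vec{\varphi}_c)$, and a direct evaluation of the momentum on the snoidal profile gives
\[
\mathcal{P}(\vec{\varphi}_c) = -c\!\int_0^L (\varphi_c')^2\,dx = -\frac{32\,c\,K(\kappa)\bigl[(1+\kappa^2)E(\kappa)-(1-\kappa^2)K(\kappa)\bigr]}{3L(1+\kappa^2)},
\]
where $K,E$ are the complete elliptic integrals and $\kappa=\kappa(c)$ is fixed implicitly by the period constraint $L\sqrt{(1-c^2)(1+\kappa^2)}=4K(\kappa)$. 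Using the classical derivatives $dE/d\kappa=(E-K)/\kappa$ and $dK/d\kappa=(E-(1-\kappa^2)K)/(\kappa(1-\kappa^2))$, together with the identity $\tfrac{d}{d\kappa}[(1+\kappa^2)E-(1-\kappa^2)K]=3\kappa E$, the bracket above is strictly positive and strictly increasing in $\kappa\in(0,1)$. Implicit differentiation of the period constraint then yields $\sgn(d\kappa/dc)=\sgn(c)$, so that both terms in $d''(c)=\partial_c\mathcal{P}(\vec{\varphi}_c)$ share the same negative sign, and $d''(c)<0$ for every $c\neq 0$.

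Finally, with $n(\mathcal{H}_c)=1$ and $d''(c)<0$, so that $p(d''(c))=0$, the GSS instability index $n(\mathcal{H}_c)-p(d''(c))=1$ is odd, whence orbital instability of $\vec{\varphi}_c$ in $X$ follows. The main technical obstacle is the sign analysis of $d''(c)$: it rests on non-trivial manipulations of complete elliptic integrals through the implicit relation $\kappa=\kappa(c)$, and one must also be careful to distinguish $2K$- from $4K$-periodic Lamé modes when counting the negative eigenvalues of $\Lambda_2$ on the fundamental torus $\T_L$.
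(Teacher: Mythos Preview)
Your approach coincides with the paper's: reduce the Hessian via the quadratic form to the scalar Lam\'e-type operator $\mathcal{L}_c=-(1-c^2)\partial_y^2-1+3\varphi_c^2$, invoke the explicit Lam\'e spectrum to get $n(\mathcal{H}_c)=1$ with simple kernel $\varphi_c'$, and then show $d''(c)<0$ by differentiating $d'(c)=-c\int(\varphi_c')^2$ through the elliptic-integral identities and the implicit relation $\kappa=\kappa(c)$; the paper parametrises by $\beta_1$ rather than $\kappa$, but this is cosmetic.

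Two points to tighten. First, your period constraint is inverted: from $\ell^2(1-c^2)(1+\kappa^2)=1$ and $L=4K/\ell$ one gets $L=4K(\kappa)\sqrt{(1-c^2)(1+\kappa^2)}$, not $L\sqrt{(1-c^2)(1+\kappa^2)}=4K$. With the correct relation your conclusion $\sgn(d\kappa/dc)=\sgn(c)$ does hold, but the formula as written would give the opposite sign. Second, to conclude that ``both terms in $d''(c)$ share the same negative sign'' you need $\tfrac{d}{d\kappa}\bigl[K\,B/(1+\kappa^2)\bigr]>0$, not merely $B'=3\kappa E>0$; the increasing denominator $1+\kappa^2$ must be handled. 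One line suffices: $(1+\kappa^2)(K'B+3\kappa KE)-2\kappa KB=(1+\kappa^2)K'B+\kappa K\bigl[(1+\kappa^2)E+2(1-\kappa^2)K\bigr]>0$. With these fixes your argument is complete and matches the paper's.
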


Moreover, as a by-product of our analysis we also conclude the orbital instability for the standing case $c=0$. However, under some additional hypothesis we have the following result. 
\begin{thm}[Orbital Stability: stationary case]\label{MT_STATIONARY}
The real-valued stationary ($c=0$) snoidal wave solution (see the first member of \eqref{rof_def_2}) is orbitally stable by the periodic flow of the $\phi^4$ equation under $(\mathrm{odd},\mathrm{odd})$ perturbations in the energy space.
\end{thm}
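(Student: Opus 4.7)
The plan is to exploit the fact that the $(\text{odd},\text{odd})$ subspace $X_{\text{odd}}:=H^1_{\text{odd}}(\T_L)\times L^2_{\text{odd}}(\T_L)$ is invariant under the flow of \eqref{phif_2}: this follows from the symmetry $\vec{\phi}(t,x)\mapsto -\vec{\phi}(t,-x)$ together with the oddness of the nonlinearity $\phi^3$. Since the stationary snoidal profile $\vec{\varphi}_0=(\varphi,0)$ with $\varphi(x)=\widetilde{\beta}_3\,\mathrm{sn}(\ell_3 x;\kappa_3)$ itself lies in $X_{\text{odd}}$, and since its translation orbit meets $X_{\text{odd}}$ only at the two isolated points $\pm\vec{\varphi}_0$, the orbital distance $\inf_{\rho}\Vert\vec{\phi}(t)-\vec{\varphi}_0(\cdot-\rho)\Vert_{X}$ is controlled, in a small neighborhood of $\vec{\varphi}_0$, by the unmodulated distance $\Vert\vec{\phi}(t)-\vec{\varphi}_0\Vert_{X}$. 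Consequently no modulation parameter needs to be introduced, and it is enough to exhibit $\mathcal{E}$ as a coercive Lyapunov functional for $\vec{\varphi}_0$ on $X_{\text{odd}}$.

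Since $\vec{\varphi}_0$ is a critical point of $\mathcal{E}$, a Taylor expansion produces
\[
\mathcal{E}(\vec{\phi})-\mathcal{E}(\vec{\varphi}_0)=\tfrac12\big\langle \mathcal{E}''(\vec{\varphi}_0)(\vec{\phi}-\vec{\varphi}_0),\vec{\phi}-\vec{\varphi}_0\big\rangle+O\big(\Vert\vec{\phi}-\vec{\varphi}_0\Vert_{X}^{3}\big),
\]
with $\mathcal{E}''(\vec{\varphi}_0)=\mathrm{diag}(L_+,1)$ and $L_+:=-\partial_x^2-1+3\varphi^2$. The $\phi_2$-block is the identity, so the entire question reduces to proving that $L_+$ is strictly positive definite on $L^2_{\text{odd}}(\T_L)$. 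Matching the stationary ODE $\varphi''=\varphi^3-\varphi$ with the Jacobi identity for $\mathrm{sn}$ yields the snoidal relations $\ell_3^{\,2}=1/(1+\kappa_3^{\,2})$ and $\widetilde{\beta}_3^{\,2}=2\kappa_3^{\,2}/(1+\kappa_3^{\,2})$, and the change of variable $u=\ell_3 x$ then identifies $L_+$ with $\ell_3^{\,2}\!\cdot\![-\partial_u^2+6\kappa_3^{\,2}\mathrm{sn}^2(u;\kappa_3)-(1+\kappa_3^{\,2})]$, i.e.\ a rescaled and shifted version of the classical order-two Lam\'e operator.

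The first five band-edge eigenvalues of this Lam\'e operator and the parities of their eigenfunctions are classical and explicit: $\mu_1<0$, even, of the form $2\kappa_3^{\,2}\mathrm{sn}^2-C$; $\mu_2=0$, even, proportional to $\varphi'\propto \mathrm{cn}\,\mathrm{dn}$ (the translation mode); $\mu_3=3\kappa_3^{\,2}/(1+\kappa_3^{\,2})>0$, odd, proportional to $\mathrm{sn}\,\mathrm{dn}$; $\mu_4=3/(1+\kappa_3^{\,2})>0$, odd, proportional to $\mathrm{sn}\,\mathrm{cn}$; and $\mu_5>0$, even (top of the second spectral gap). The remaining part of the spectrum lies inside $[\mu_5,\infty)$ and consists of alternately even and odd periodic/antiperiodic eigenfunctions. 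Restricting to $L^2_{\text{odd}}(\T_L)$ therefore eliminates exactly the three problematic eigenvectors (those attached to $\mu_1,\mu_2,\mu_5$), leaving the strict bound
\[
\langle L_+ u,u\rangle_{L^2(\T_L)}\,\geq\,\mu_3\,\Vert u\Vert_{L^2(\T_L)}^{\,2}\qquad\text{for every }u\in H^1_{\text{odd}}(\T_L).
\]

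Combining this coercivity of $L_+$ with the positivity of the $\phi_2$-block yields a quadratic lower bound $\mathcal{E}(\vec{\phi})-\mathcal{E}(\vec{\varphi}_0)\geq c_0\Vert\vec{\phi}-\vec{\varphi}_0\Vert_{X}^{\,2}$ valid for every $\vec{\phi}\in X_{\text{odd}}$ in a small enough $X$-neighborhood of $\vec{\varphi}_0$, and the conservation of $\mathcal{E}$ along the flow then closes the argument via the standard continuity/contradiction scheme. The main obstacle I anticipate is the rigorous bookkeeping of the Lam\'e spectrum on the fundamental period $L$: in particular, verifying via Sturm--Liouville node-counting on Hill operators with an even potential that the five explicit band-edge eigenfunctions exhaust \emph{all} eigenvalues below $\mu_5$, and confirming that every odd eigenfunction living above $\mu_5$ really carries a strictly positive eigenvalue, so that the coercivity persists on the whole of $L^2_{\text{odd}}(\T_L)$ and not merely on a finite-dimensional subspace.
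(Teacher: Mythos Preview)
Your proposal is correct and follows essentially the same route as the paper: use the energy $\mathcal{E}$ as a Lyapunov functional on the flow-invariant subspace $X_{\text{odd}}$, reduce everything to coercivity of $L_+$ on odd functions via the Lam\'e spectrum, and conclude from energy conservation without any modulation. The paper dispatches your anticipated obstacle more directly by using Floquet node-counting to pin down only the first three eigenvalues of $L_+$ (the first two eigenfunctions are even, the third eigenvalue equals $\tfrac{3}{2}\beta_2^2>0$ with odd eigenfunction $\mathrm{sn}\,\mathrm{dn}$), so no tracking of $\mu_4,\mu_5$ or higher modes is needed; it also makes explicit the routine upgrade from $L^2$- to $H^1$-coercivity of $L_+$ that you pass over when asserting the bound in the $X$-norm.
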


\begin{rem}
We remark that the oddness character of the initial data is preserved by the periodic flow associated to equation \eqref{phif}. In other words, if $\vec{\phi}_0=(\phi_{0,1},\phi_{0,2})=(\mathrm{odd},\mathrm{odd})$, then so is the solution for all times. Then, noticing that the stationary snoidal wave $(S,0)$ correspond to an $(\mathrm{odd},\mathrm{odd})$ vector (see Section \ref{mt_r_s}), we obtain that, under the assumptions of the previous theorem, the solution shall always remain odd.  Here, and for the rest of this paper, when we refer to an \emph{odd} function, we mean that it is odd regarded as a function in the whole line.
\end{rem}

Finally we show that, in the complex-valued case, we have stability in the \emph{odd energy space}. 
\begin{thm}[Orbital Stability of snoidal waves: complex-valued case]\label{MT_COMPLEX}
The complex-valued snoidal wave solution (see the second member of \eqref{rof_def_2}) is orbitally stable  by the periodic $\phi^4$-flow under $(\mathrm{odd},\mathrm{odd})$ perturbations in the energy space.
\end{thm}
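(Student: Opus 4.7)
The plan is to apply the Grillakis--Shatah--Strauss abstract framework \cite{GSS,GSS2} to the complex standing wave $\vec{\varphi}_c(t,x) = \bigl(e^{ict}\psi(x),\, ic\, e^{ict}\psi(x)\bigr)$ with $\psi(x) := \widetilde{\beta}_4\,\mathrm{sn}(\ell_4 x;\kappa_4)$, using the one-parameter gauge action $T(\theta)\vec u := e^{i\theta}\vec u$; this action leaves \eqref{phif_2} invariant, and its associated conserved charge is $\mathcal{F}$ from \eqref{niE_nim}. Passing to the rotating frame $\vec\phi = e^{ict}\vec{\tilde\phi}$ turns $\vec{\varphi}_c$ into a stationary critical point $\vec\psi_c := (\psi, ic\psi)$ of the action $\mathcal{S}_c := \mathcal{E} - c\,\mathcal{F}$, and the goal becomes to prove coercivity of the Hessian $\mathcal{H}_c := \mathcal{S}_c''(\vec\psi_c)$ on the symplectic complement of the gauge orbit inside the $(\mathrm{odd},\mathrm{odd})$-subspace of $X$, after which the abstract GSS criterion yields orbital stability.

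First, I would compute $\mathcal{H}_c$ explicitly. Writing a perturbation as $\vec\eta = (u_1+iv_1,\,u_2+iv_2)$ with $u_j,v_j$ real and odd, and completing squares in the algebraic variables $u_2,v_2$, one obtains
\begin{equation*}
\langle\mathcal{H}_c\vec\eta,\vec\eta\rangle \,=\, \langle\mathcal{L}_+ u_1,u_1\rangle + \langle\mathcal{L}_- v_1,v_1\rangle + \Vert u_2+cv_1\Vert_{L^2}^2 + \Vert v_2-cu_1\Vert_{L^2}^2,
\end{equation*}
with the scalar Schr\"odinger operators
\begin{equation*}
\mathcal{L}_+ := -\partial_x^2 + 3\psi^2 - (1+c^2), \qquad \mathcal{L}_- := -\partial_x^2 + \psi^2 - (1+c^2).
\end{equation*}
The non-negative $\phi_2$-contribution reduces the analysis of $\mathcal{H}_c|_{\mathrm{odd}\times\mathrm{odd}}$ to the spectra of $\mathcal{L}_\pm$ on $L^2_{\mathrm{odd}}(\T_L)$.

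The profile equation gives $\mathcal{L}_-\psi=0$ with $\psi$ odd, so $\psi$ spans $\ker(\mathcal{L}_-|_{\mathrm{odd}})$; the resulting kernel direction of $\mathcal{H}_c$ is precisely the gauge generator $i\vec\psi_c = (i\psi,-c\psi)$, which lies in $(\mathrm{odd},\mathrm{odd})$. Differentiating the profile equation yields $\mathcal{L}_+\psi'=0$, but $\psi'$ is \emph{even}, hence $\ker(\mathcal{L}_+|_{\mathrm{odd}})=\{0\}$. Identifying odd $L$-periodic functions with Dirichlet eigenfunctions on $[0,L/2]$, and using that $\mathrm{sn}(\ell_4\cdot;\kappa_4)$ has no interior zeros on this interval, Sturm oscillation gives $n(\mathcal{L}_-|_{\mathrm{odd}})=0$; the known band-edge structure of the Lam\'e-$n{=}2$ operator (to which $\mathcal{L}_+$ reduces after rescaling, via the relations for $(\widetilde{\beta}_4,\ell_4,\kappa_4)$ from Proposition \ref{MT_SN_COMPLEX}) yields $n(\mathcal{L}_+|_{\mathrm{odd}})=1$. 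Thus $\mathcal{H}_c|_{\mathrm{odd}\times\mathrm{odd}}$ has exactly one negative direction and a one-dimensional kernel spanned by the gauge generator.

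To close the argument I would finally verify the Vakhitov--Kolokolov convexity condition $d''(c)>0$ for $d(c):=\mathcal{S}_c(\vec{\varphi}_c)$. Criticality of $\vec{\varphi}_c$ gives $d'(c) = -\mathcal{F}(\vec{\varphi}_c) = -c\Vert\psi\Vert_{L^2}^2$, so the condition reduces to $\tfrac{d}{dc}\bigl[c\Vert\psi\Vert_{L^2}^2\bigr]<0$; using the identity $\int_0^{4K(\kappa)}\mathrm{sn}^2(y;\kappa)\,dy = 4[K(\kappa)-E(\kappa)]/\kappa^2$, the period constraint $\ell_4 L = 4K(\kappa_4)$, and the relations of Proposition \ref{MT_SN_COMPLEX}, this becomes a monotonicity inequality for an explicit rational combination of the complete elliptic integrals $K(\kappa_4),E(\kappa_4)$ along the admissible range. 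I expect this last step to be the main obstacle: while the GSS reduction, the decoupling into $\mathcal{L}_\pm$, and the parity-based spectral count all follow the same Lam\'e/Sturm pattern developed for Theorem \ref{MT_STATIONARY}, the sign of $d''(c)$ hinges on a delicate elliptic-integral monotonicity that must be verified by hand over the whole admissible $\kappa_4$-range.
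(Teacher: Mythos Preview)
Your overall architecture --- GSS with the gauge symmetry, decoupling of the Hessian into the scalar operators $\mathcal{L}_+$ and $\mathcal{L}_-$, and a parity-based spectral count --- is exactly the route the paper takes. But the spectral count for $\mathcal{L}_+$ on the odd subspace is wrong, and this propagates to a wrong expected sign for $d''(c)$.

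Concretely: after the rescaling $x\mapsto\ell^{-1}x$, $\mathcal{L}_+$ becomes the Lam\'e $n{=}2$ operator \eqref{sn_lame}. Its first three periodic eigenfunctions are (ground state) an even function with no zeros, then $\mathrm{cn}\,\mathrm{dn}$ (even) at $\sigma_1=1+\kappa^2$, then $\mathrm{sn}\,\mathrm{dn}$ (odd) at $\sigma_2=1+4\kappa^2$. Under the correspondence \eqref{eq_eig_sn_complex}, $\sigma_1$ gives $\lambda=0$ and $\sigma_2$ gives $\lambda=3(1+c^2)-\tfrac{3}{2}\beta_1^2>0$. Thus the \emph{lowest odd} eigenfunction of $\mathcal{L}_+$ is $\mathrm{sn}\,\mathrm{dn}$, and it has \emph{strictly positive} eigenvalue. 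Equivalently, in your Dirichlet picture on $[0,L/2]$ the ground state of $\mathcal{L}_+$ is $\mathrm{sn}(\ell\cdot)\mathrm{dn}(\ell\cdot)$, which is positive on $(0,L/2)$ and lies at a positive level. Hence $n(\mathcal{L}_+|_{\mathrm{odd}})=0$, not $1$; combined with your (correct) $n(\mathcal{L}_-|_{\mathrm{odd}})=0$, the Hessian restricted to $(\mathrm{odd},\mathrm{odd})$ has \emph{no} negative direction and a one-dimensional kernel spanned by the gauge generator.

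This changes the Vakhitov--Kolokolov step. With $n=0$, the GSS\,II criterion requires $\mathfrak{p}(d'')=0$, i.e.\ $d''(c)\le 0$, and indeed the paper computes $d''(c)<0$ for all admissible $c$ (Section~\ref{os_c}). If you had pushed your computation you would have found $\tfrac{d}{dc}\big[c\Vert\psi\Vert_{L^2}^2\big]>0$, hence $d''(c)<0$, and with your mistaken count $n=1$ this would have forced the (false) conclusion of instability. So the ``main obstacle'' you anticipate is not just delicate --- it goes the other way, and only matches up once the Lam\'e count on the odd sector is corrected to zero.
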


\begin{rem}
We point out that, since equation \eqref{phif} is also invariant under the maps: \[
u(t,x)\mapsto u(-t,x),\quad u(t,x)\mapsto -u(t,x) \quad \hbox{and} \quad u(t,x)\mapsto -u(-t,x),
\]
we also deduce Theorems \ref{MT3_SN}, \ref{MT_STATIONARY} and\footnote{Notice that, for the case of Theorem \ref{MT_COMPLEX}, all three of these symmetries preserves the oddness of the solution.} \ref{MT_COMPLEX} for both, snoidal and \emph{anti-snoidal} waves in both cases, moving to the left or right respectively.
\end{rem}

\subsection{Organization of this paper} This paper is organized as follow. In Section \ref{preliminaries} we introduce the main objects needed in our analysis and review their most important properties. In Sections \ref{curve_r} and \ref{curve_c} we deal with the existence of smooth curves of solitary waves in the real-valued and complex-valued case respectively. Then, in Section \ref{mt_r_t} we prove our main instability result for the real-valued case. In Section \ref{mt_r_s} we prove the Stability Theorem \ref{MT_STATIONARY}.  Finally, in Section \ref{os_c} we prove our stability theorem for the complex-valued case, that is, we prove Theorem \ref{MT_COMPLEX}.

\medskip

\section{Premiliminaries: Jacobi Elliptic Functions}\label{preliminaries}

We refer to \cite{An,BiFr} for more extensive and detailed properties of these functions. First of all, we introduce the complete elliptic integral of first kind:
\begin{align}\label{CEIFK}
K(k):=\int_0^1\dfrac{dx}{\sqrt{(1-x^2)(1-k^2x^2)}}=\int_0^{\pi/2}\dfrac{d\theta}{\sqrt{1-k^2\sin^2\theta}}, \quad k\in (0,1).
\end{align}
In addition, we also introduce the so-called complete elliptic integral of second kind:
\begin{align}\label{CEISK}
E(k):=\int_0^1\sqrt{\dfrac{1-k^2x^2}{1-x^2}}dx=\int_0^{\pi/2}\sqrt{1-k^2\sin^2\theta}d\theta, \quad k\in(0,1)
\end{align}
We remark that from now on, every time we write $E(k)$ or $E(\kappa)$ we shall be referring to this function (recall that we use $\mathcal{E}$ for the energy, see \eqref{energy}). These two elliptic integrals shall play a fundamental role in the following sections. Some basic properties of these functions are:
\begin{align}\label{EK_zero_one}
E(0)=\tfrac{\pi}{2}, \quad E(1)=1, \quad K(0)=\tfrac{\pi}{2}, \quad K(1)=+\infty
\end{align}
Moreover, these two function are strictly monotone. Specifically, for all $k\in(0,1)$ we have
\begin{align}\label{Eprime}
E'(k)<0,\quad E''(k)<0, \quad K'(k)>0, \quad K''(0)>0.
\end{align}
On the other hand, $E(k)<K(k)$ for all $k\in(0,1)$ and the maps $k\mapsto E+K$ and $k\mapsto EK$ are also strictly increasing. Additionally, $E$ and $K$ satisfies the following ODEs: \begin{align}\label{ek_derivative}
E'(k)=\dfrac{E(k)-K(k)}{k} \quad \hbox{ and } \quad K'(k)=\dfrac{E(k)-(1-k^2)K(k)}{k(1-k^2)}.
\end{align}
Now we turn our attention to Jacobi elliptic functions. First of all, consider the elliptic integral: \[
u(y,k):=\int_0^y\dfrac{dt}{\sqrt{(1-t^2)(1-k^2t^2)}}=\int_0^{\varphi}\dfrac{d\theta}{\sqrt{1-k^2\theta^2}},
\]
which defines a \textbf{strictly increasing} function with respect to $y$, and hence, it has a well-defined inverse given by the so-called snoidal function \begin{align}\label{sn_preliminaries}
y=\sin\varphi=\mathrm{sn}(u;k).
\end{align}
Once the snoidal function has been introduced, we are in position to define the so-called \emph{cnoidal} and \emph{dnoidal} elliptic functions, which are given by (respectively): \[
\mathrm{cn}(u;k)=\sqrt{1-y^2}=\sqrt{1-\mathrm{sn}^2(u;k)} \quad \hbox{and}\quad \mathrm{dn}(u;k)=\sqrt{1-k^2y^2}=\sqrt{1-k^2\mathrm{sn}^2(u;k)},
\]
requiring that $\mathrm{sn}(0;k)=0$, $\mathrm{cn}(0,k)=1$ and $\mathrm{dn}(0,k)=1$. All of these functions have parity properties, specifically, they are odd, even and even functions respectively. Moreover, they have well-known fundamental periods given by $4K(k)$, $4K(k)$ and $2K(k)$ respectively. Summarizing the most important properties of Jacobi elliptic functions we have \begin{align*}
\mathrm{sn}^2u+\mathrm{cn}^2u=1, \qquad & \qquad  \mathrm{dn}^2u+k^2\mathrm{sn}^2u=1, 
\\ -1\leq \mathrm{sn}u,\mathrm{cn}u\leq 1, \qquad & \qquad  \sqrt{1-k^2}\leq \mathrm{dn}u\leq 1. 
\end{align*}
Additionally, all of these functions satisfies some elliptic ODEs. Specifically, 
$\mathrm{sn}(x;k)$ satisfies:
\begin{align}\label{ode_sn}
\left(\dfrac{dy}{dx}\right)^2=(1-y^2)(1-\kappa^2y^2).
\end{align}
On the other hand, $\mathrm{cn}(x;k)$ and $\mathrm{dn}(x,k)$ satisfy the following equations (respectively): \[
\left(\dfrac{dy}{dx}\right)^2=(1-y^2)(1-k^2+k^2y^2) \quad \hbox{and} \quad \left(\dfrac{dy}{dx}\right)^2=(y^2-1)(1-k^2-y^2).
\]

\medskip

\section{Existence of a smooth curve of periodic traveling waves: Real-valued case}\label{curve_r}

\subsection{Superluminal  waves}
Our first goal is to establish the existence of smooth curves of periodic traveling wave solutions to equation \eqref{phif}. In fact, in this subsection we seek for solutions of the form $u(t,x)=\phi_c(x-ct)$. In addition, for the rest of this subsection we shall assume we are in the superluminal case, that is,  we shall consider $c^2>1$. Thus, plugging $\phi_c(x-ct)$ into the equation we obtain that if $u(t,x)$ is a traveling waves solution, then $\phi_c$ must satisfy: \begin{align}\label{solit_eq}
(c^2-1)\phi_c''=\phi_c-\phi_c^3.
\end{align}
Hence, by multiplying both sides of the equation by $\phi_c'$, and then integrating, we obtain that $\phi_c$ must to satisfy the following first-order differential equation in quadrature form: \begin{align}\label{int_eliptic}
(\phi_c')^2=\dfrac{1}{2\omega_{sl}}\left(2\phi_c^2-\phi_c^4+4A_{\phi_c}\right)=-\dfrac{1}{2\omega_{sl}}F_{\phi_c}(\phi_c),
\end{align}
where $A_{\phi_c}$ stands for an arbitrary integration constant, $\omega_{sl}:=c^2-1$ and the polynomial function $F_{\phi_c}$ is given by \[
F_{\phi_c}(z):=z^4-2z^2-4A_{\phi_c}.
\] 
It is worth to notice that in sharp contrast with the aperiodic setting, in this case we cannot assume that $A_{\phi_c}=0$ as usual. Moreover, regarding the factorization of the polynomial function $F_{\phi_c}$, notice that we only have two cases. In fact, either we have four real roots of the form $\pm\beta_1$, $\pm\beta_2$ or two real and two complex roots. That is, either we have \[
F_{\phi_c}(z)=(z^2-\beta_1^2)(z^2-\beta_2^2) \quad \hbox{or}\quad F_{\phi_c}(z)=(z^2+\beta_1^2)(z^2-\beta_2^2),
\]
where $\beta_1,\beta_2\in\R$ stands for the zeros of the polynomial $F_{\phi_c}$.

\medskip

\textbf{First case:} Our aim now is to find the periodic traveling wave solutions assuming that $F_{\phi_c}$ has exactly four real roots, that is, assuming that $$
F_{\phi_c}(z)=(z^2-\beta_1^2)(z-\beta_2^2),
$$
for some $\beta_1,\beta_2\in\R$. Notice that without loss of generality we can also assume $\beta_1>\beta_2>0$. Thus, if we seek for positive solutions\footnote{Notice that if $\phi_c$ is a solution, then so is $-\phi_c$, and hence, there is no loss of generality in this assumption.}, equation \eqref{int_eliptic} imposes the additional constraint $\phi_c(x)\in [\beta_2,\beta_1]$. Hence, if $F_{\phi_c}$ has four real roots it follows that \[
\beta_1^2+\beta_2^2=2 \quad \hbox{ and }\quad \beta_1^2\beta_2^2=-4A_{\varphi_c}.
\]
Now, for the sake of simplicity we define the auxiliary variables $\psi:=\beta_1^{-1}\phi_c$ and $\kappa^2=\beta_1^{-2}(\beta_1^2-\beta_2^2)$. Then, equation \eqref{int_eliptic} becomes \begin{align}\label{aux_eq_1}
(\psi')^2=-\dfrac{\beta_1^2}{2\omega_{sl}}(\psi^2-1)(\psi^2-1+\kappa^2)
\end{align}
Now we change variables once again. In fact, by considering the auxiliary variable given by the relation \[
\psi^2=:1-\kappa^2\sin^2\eta \quad \implies \quad 2\psi\psi'=-2\kappa^2\eta'\sin\eta \cos\eta,
\]
we obtain that equation \eqref{aux_eq_1} can be conveniently rewritten as \[
(\eta')^2=\dfrac{\beta_1^2}{2\omega_{sl}}(1-\kappa^2\sin^2\eta).
\]
Thus, by using Jacobi elliptic function we infer that $\sin \psi=\mathrm{sn}(\ell x;\kappa)$ where $\ell^2:=\tfrac{\beta_1^2}{2\omega_{sl}}$. Going back to the $\psi$ variable we obtain \[
\psi(x)=\sqrt{1-\kappa^2\mathrm{sn}^2(\ell x;\kappa)}=\mathrm{dn}(\ell x;\kappa).
\]
Therefore, the explicit solution to equation \eqref{solit_eq} is given by  $\phi_c(x)=\beta_1 \mathrm{dn}(\ell x;\kappa)$. On the other hand, notice that due to the relation $\beta_1^2+\beta_2^2=2$ and the fact that $\beta_1>\beta_2$ we deduce that $\beta_1\in(1,\sqrt{2})$. Finally, recall that the fundamental period of $\mathrm{dn}(\cdot;\kappa)$ is exaclty $2K(\kappa)$. Hence, we obtain that solution $u(t,x)$  has fundamental period (wavelength, denoted by $T_{\mathrm{dn}}$) given by \begin{align}\label{dn_period}
T_{\mathrm{dn}}:=\tfrac{2\sqrt{2\omega_{sl}}}{\beta_1}K(\kappa).
\end{align}
We point out that from the previous formula it follows that (a priori) the wavelength depends on the propagation speed $c$. However, as we shall see below, by making use of the parameter $\beta_1$ one can disengage $T_{\mathrm{dn}}$ from $c$, and hence, for $T_{\mathrm{dn}}$ fixed, there shall exists a whole family of traveling waves solutions with different speeds and the same period (see Proposition \ref{MT_DN_CURVE}). 

\begin{rem}[Range of the wavelength] We have the following scenarios:

\smallskip

\textbf{Case $\beta_1\to 1^+$:} Noticing that $\beta_2\to 1^-$ as $\beta_1\to 1^+$, it immediately follows from the relation \[\kappa^2=\tfrac{1}{\beta_1^{2}}(\beta_1^2-\beta_2^2)\] that $\kappa(\beta_1) \to0^+$ as $\beta_1\to 1^+$. On the other hand, from \eqref{EK_zero_one} we know that $K(0)=\tfrac{\pi}{2}$, and hence we obtain that  $T_{\mathrm{dn}}\to \pi\sqrt{2\omega}$.

\smallskip

\textbf{Case $\beta_1\to\sqrt{2}^-$:} In this case, by using the relation between $\kappa$ and $(\beta_1,\beta_2)$ again, we deduce that $\kappa(\beta_1)\to1^{-}$ as $\beta_1\to\sqrt{2}^{-}$. Then, recalling that from \eqref{EK_zero_one} we know that $K(1)=+\infty$, we infer that  $T_{\mathrm{dn}}\to+\infty$. 
\end{rem}

\begin{rem}\label{dn_almost_proof}
It is worth to notice that for any given period $L>0$ and any given speed \begin{align}\label{dn_c_interval} \vert c\vert \in \left(1,\sqrt{1+\tfrac{L^2}{2\pi^2}}\right),
\end{align} there exists a unique pair $(\beta_1,-A_{\varphi_c})\in (1,\sqrt{2})\times (0,\tfrac{1}{4})$ such that the corresponding $\mathrm{dn}(\cdot,\cdot)$ wave solution found above satisfies $
T_{\mathrm{dn}}=L$. In fact, it is enough to notice that, by writing $T_{\mathrm{dn}}$ as a function of $\beta_1$, we have\footnote{We shall rigorously prove this inequality in the proof of Proposition \ref{MT_DN_CURVE} below.} that $\tfrac{d}{d\beta_1}T_{\mathrm{dn}}>0$ for all $\beta_1\in(1,\sqrt{2})$, together with the fact that $T_{\mathrm{dn}}((1,\sqrt{2}))=(\pi\sqrt{2\omega_{sl}},+\infty)$. Notice also that under condition \eqref{dn_c_interval}, we have the bound $\pi\sqrt{2\omega_{sl}}<L$. Then, we conclude by applying the Implicit Function Theorem. We point out that once we choose $\beta_1$, the arbitrary constant $A_{\phi_c}$ is fixed by the relation $\beta_1^2(2-\beta_1^2)=-4A_{\phi_c}$.  
\end{rem}

\smallskip

Gathering all the above information we are in position to conclude the following proposition.
\begin{prop}[Smooth curve of dnoidal waves]\label{MT_DN_CURVE}
Let $L>0$ be arbitrary but fixed. Then, for any speed $c$ satisfying $$\vert c\vert \in\left(1,\sqrt{1+\tfrac{L^2}{2\pi^2}}\right),$$
there exists a unique $\beta_1=\beta_1(c)\in(1,\sqrt{2})$ such that the dnoidal wave solution to equation \eqref{phif} given by \[
u(t,x)=\beta_1\mathrm{dn}\big(\ell(x-ct);\kappa\big), \quad \hbox{where}\quad \ell^2=\tfrac{\beta_1^2}{2\omega_{sl}} \quad \hbox{and}\quad \kappa^2=\tfrac{2(\beta_1^2-1)}{\beta_1^2},
\]
has fundamental period $T_{\mathrm{dn}}=L$ and satisfies equation \eqref{solit_eq}, where $\omega_{sl}=c^2-1$. Moreover, the map $c\mapsto u(0,x)\in H^1(\T_L)$ is smooth.
\end{prop}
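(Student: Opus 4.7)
The proof can be organized in three steps. First, I would observe that the derivation preceding the statement already establishes the existence part: the function $\phi_c(x) = \beta_1 \mathrm{dn}(\ell x; \kappa)$ with the indicated $\ell$ and $\kappa$ does solve \eqref{solit_eq} for any admissible $\beta_1 \in (1, \sqrt{2})$, and its fundamental period is given by $T_{\mathrm{dn}} = \tfrac{2\sqrt{2\omega_{sl}}}{\beta_1} K(\kappa)$. Therefore, the work reduces to showing that the map $\beta_1 \mapsto T_{\mathrm{dn}}$ is a smooth strictly increasing bijection from $(1, \sqrt{2})$ onto $(\pi\sqrt{2\omega_{sl}}, +\infty)$. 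Smoothness on $(1, \sqrt{2})$ is clear, while the two limiting values at the endpoints follow from \eqref{EK_zero_one} and are already recorded in the first remark of this section. Once the monotonicity is in hand, I would apply the Implicit Function Theorem to the equation $T_{\mathrm{dn}}(\beta_1) = L$ in order to obtain a smooth branch $c \mapsto \beta_1(c)$ on the interval $\vert c \vert \in \big(1, \sqrt{1 + L^2/(2\pi^2)}\big)$ (the upper bound being exactly what ensures $\pi\sqrt{2\omega_{sl}} < L$). The smoothness of $c \mapsto u(0, \cdot) \in H^1(\T_L)$ then follows by composing $\beta_1(c)$, $\ell(c)$ and $\kappa(c)$ with the smooth dependence of $\mathrm{dn}(\ell\, \cdot\,; \kappa)$ on its parameters.

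The main content of the proof is thus the strict monotonicity. Since $\kappa^2 = 2 - 2/\beta_1^2$ defines a smooth increasing bijection from $(1, \sqrt{2})$ onto $(0, 1)$, I would reparameterize by $\kappa$ and rewrite the period as $T_{\mathrm{dn}} = 2\sqrt{\omega_{sl}(2-\kappa^2)}\, K(\kappa)$. Differentiating with respect to $\kappa$ and substituting the formula for $K'(\kappa)$ from \eqref{ek_derivative}, a direct manipulation shows that the sign of $\tfrac{d}{d\kappa} T_{\mathrm{dn}}$ coincides with that of
\[
g(\kappa) := (2-\kappa^2)\,E(\kappa) - 2(1-\kappa^2)\, K(\kappa).
\]

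To prove $g(\kappa) > 0$ on $(0,1)$ I would use the boundary value $g(0) = 2E(0) - 2K(0) = 0$, which comes from \eqref{EK_zero_one}, and then differentiate once more. Applying both identities in \eqref{ek_derivative} and grouping the resulting terms over the common denominator $\kappa$, the expression should collapse to $g'(\kappa) = 3\kappa\big(K(\kappa) - E(\kappa)\big)$, which is strictly positive on $(0,1)$ by the standard inequality $E < K$ recalled in the preliminaries. Hence $g > 0$ and the strict monotonicity of $T_{\mathrm{dn}}$ follows. The only delicate point in the whole argument is precisely this cancellation: differentiating $g$ naively produces several terms involving $1-\kappa^2$ in the denominator, and these must be combined carefully to reveal the clean form of $g'(\kappa)$. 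Apart from this computation, everything reduces to a routine use of the Implicit Function Theorem together with the smoothness of Jacobi elliptic functions in their parameters.
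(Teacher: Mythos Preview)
Your proof is correct and follows essentially the same strategy as the paper: reduce to showing strict monotonicity of $T_{\mathrm{dn}}$, rewrite the derivative in terms of an auxiliary function vanishing at the endpoint, and differentiate once more using \eqref{ek_derivative} together with $K>E$. The only difference is cosmetic: the paper differentiates with respect to $\beta_1$ and works with $F(\beta_1)=E(\kappa)+(\beta_1^2-2)K(\kappa)$, whereas you reparameterize by $\kappa$ and work with $g(\kappa)=(2-\kappa^2)E(\kappa)-2(1-\kappa^2)K(\kappa)$; since $\beta_1^2-2=-2(1-\kappa^2)/(2-\kappa^2)$ one has $g(\kappa)=(2-\kappa^2)F(\beta_1)$, so the two auxiliary functions differ by a positive factor. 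Your choice makes the second differentiation collapse more cleanly to $g'(\kappa)=3\kappa\big(K(\kappa)-E(\kappa)\big)$, while the paper obtains $F'(\beta_1)=(2\beta_1-\tfrac{1}{\beta_1})K(\kappa)-\tfrac{1}{\beta_1}E(\kappa)$ and then invokes $K>E$; both conclude in one line.
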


\begin{proof}
In fact, as we already discussed, it only remains to prove inequality $\tfrac{d}{d\beta_1}T_{\mathrm{dn}}>0$. We anticipate that, since the computations become cumbersome due to the appearance of many terms, we shall successively seek to reduce the problem to simpler equivalent problems. Indeed, by explicitly differentiating relation \eqref{dn_period} we obtain:
\begin{align}\label{db_T_dn}
\dfrac{d}{d\beta_1}T_{\mathrm{dn}}=-\dfrac{2\sqrt{2\omega_{sl}}}{\beta_1^2}K(\kappa)+\dfrac{2\sqrt{2\omega_{sl}}}{\beta_1}\cdot\dfrac{dK(\kappa)}{d\kappa}\cdot\dfrac{d\kappa(\beta_1)}{d\beta_1}.
\end{align}
On the other hand, by using the explicit form of $K$ (see \eqref{CEIFK}) we obtain that 
\begin{align}\label{dK_dk_dn}
\dfrac{dK}{d\kappa}=\dfrac{E(\kappa)-(1-\kappa^2)K(\kappa)}{\kappa(1-\kappa^2)}\quad \hbox{and}\quad 
\dfrac{d\kappa}{d\beta_1}=\dfrac{2}{\kappa \beta_1^3},
\end{align}
where $E$ denotes the complete elliptic integral of second kind (see \eqref{CEISK}). Then, by plugging \eqref{dK_dk_dn} into \eqref{db_T_dn} and after some trivial re-arrengements, we infer that proving inequality $\tfrac{d}{d\beta_1}T_{\mathrm{dn}}>0$ is equivalent to prove that \begin{align}\label{dtdb_equiv}
\dfrac{E(\kappa)-(1-\kappa^2)K(\kappa)}{\kappa^2(1-\kappa^2)}>\dfrac{\beta_1^2}{2}K(\kappa).
\end{align}
Hence, we turn our attention to prove inequality \eqref{dtdb_equiv}. In fact, first of all we recall that $1-\kappa^2>0$. Then, notice that the latter inequality can be conveniently rewritten as: \begin{align}\label{proof_dn_ineq_E_K}
E(\kappa)>\kappa^2(1-\kappa^2)\Big(\dfrac{\beta_1^2}{2}+\dfrac{1}{\kappa^2}\Big)K(\kappa).
\end{align}
Moreover, notice that the factor multiplying $K(\kappa)$ on the latter inequality can be simplified: \[
\kappa^2(1-\kappa^2)\Big(\dfrac{\beta_1^2}{2}+\dfrac{1}{\kappa^2}\Big)=2-\beta_1^2.
\]
Hence, from now on we shall work with the function $
F(\beta_1):=E(\kappa)+(\beta_1^2-2)K(\kappa)$.
Thus, in order to prove that inequality \eqref{proof_dn_ineq_E_K} holds, it is enough to show that $F(\beta_1)>0$ for all $\beta_1\in(1,\sqrt{2})$. Indeed, first of all notice that \[
\lim_{\beta_1\to1^+}F(\beta_1)=0.
\]
Then, the proof reduces to prove that $F'(\beta_1)>0$ for all $\beta_1\in(1,\sqrt{2})$. In fact, notice that on the one-hand by direct computations we have
\begin{align}\label{prime_F_proof}
F'(\beta_1)=2\beta_1K+\dfrac{2}{\kappa\beta_1^3}\left(E'(\kappa)+(\beta_1^2-2)K'(\kappa)\right).
\end{align}
On the other hand, we recall that the complete elliptic integral $E$ satisfies (see \eqref{CEISK}): 
\[
E'(k)=\dfrac{E(k)-K(k)}{k}
\]
Thus, gathering both expressions for $K'$ and $E'$ respectively and plugging them into \eqref{prime_F_proof} we obtain:
\begin{align*}
F'(\beta_1)&=\left(2\beta_1-\dfrac{2}{\kappa^2\beta_1^3}-\dfrac{2(\beta_1^2-2)}{\kappa^2\beta_1^3}\right)K(\kappa)+\dfrac{2}{\kappa\beta_1^3}\left(\dfrac{1}{\kappa}+\dfrac{\beta_1^2-2}{\kappa(1-\kappa^2)}\right)E(\kappa)
\\ & =:\mathrm{A}_1(\beta_1)K(\kappa)+\mathrm{A}_2(\beta_1)E(\kappa).
\end{align*}
Now, notice that, by using the explicit form of $\kappa(\beta_1)$, functions $\mathrm{A}_1$ and $\mathrm{A}_2$ can be significantly simplified as:
\[
\mathrm{A}_1(\beta_1)=2\beta_1-\dfrac{1}{\beta_1}>1 \quad \hbox{and} \quad \mathrm{A}_2(\beta_1)=-\dfrac{1}{\beta_1}.
\]
Therefore, recalling that $K(k)>E(k)$ for all $k\in(0,1)$ we conclude $F'(\beta_1)>0$ for all $\beta_1\in(1,\sqrt{2})$ which is the positivity we were looking for, and hence inequality \eqref{proof_dn_ineq_E_K} holds. Then, the proof follows from the Implicit Function Theorem, what finish the proof.
\end{proof}

\begin{figure}[h!]
   \centering
   \includegraphics[scale=0.45]{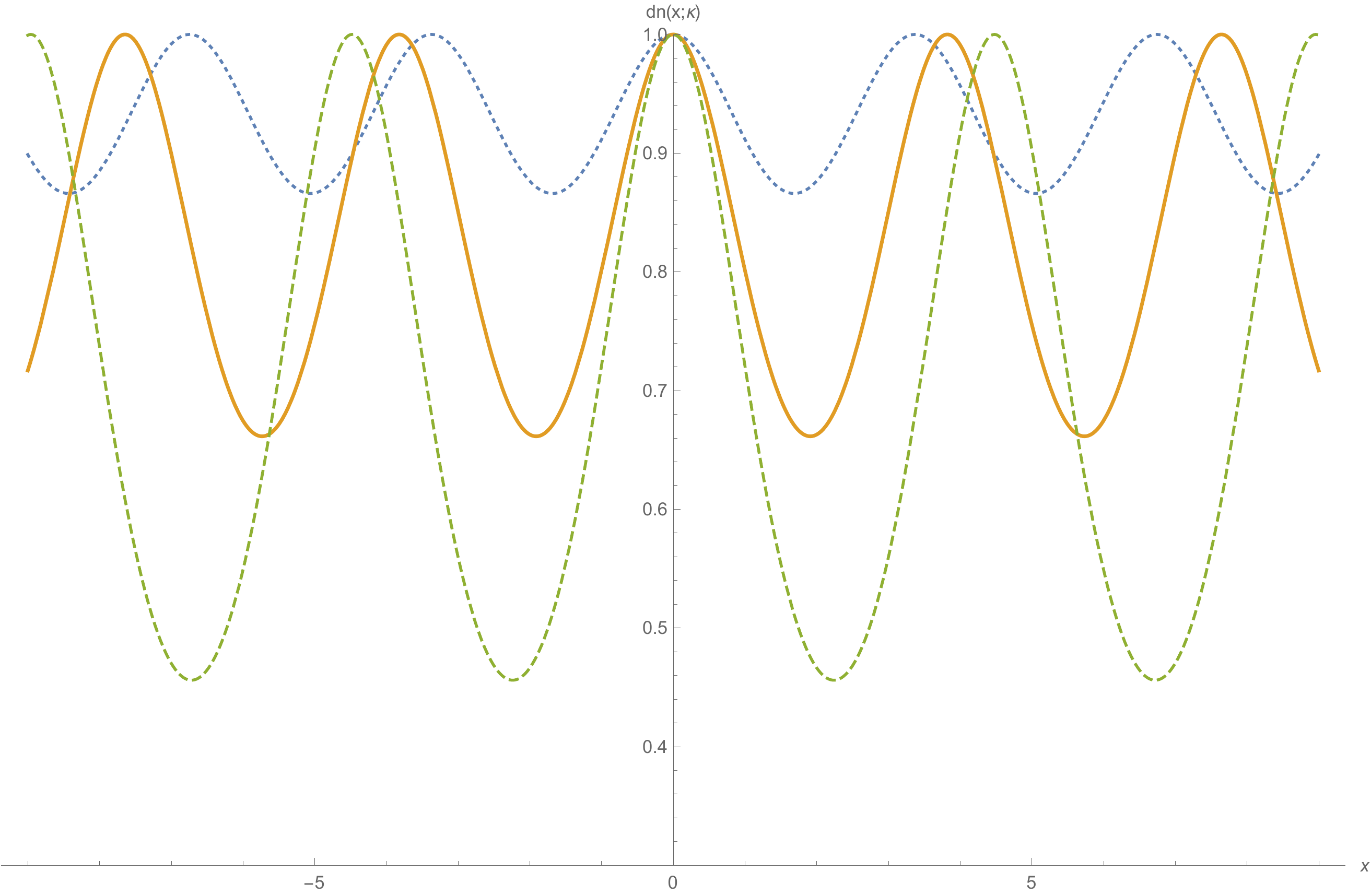}
   \caption{Dnoidal wave for different values of $\kappa\in(0,1)$. The dotted blue line represents $\mathrm{dn}(x;\kappa)$ for $\kappa=0.5$. On the thick orange line we have $\mathrm{dn}(x;\kappa)$ for $\kappa=0.75$. Finally, the dashed green line represents $\mathrm{dn}(x;\kappa)$ for $\kappa=0.9$. Notice that all of them are spatially-even and never vanish.}\label{Fig:1}
\end{figure}

\textbf{Second case:} Our goal now is to study the existence of periodic solitary waves in the case when $F_{\phi_c}$ can be factorized as $$
F_{\phi_c}(z)=(z^2+\beta_1^2)(z^2-\beta_2^2).
$$
Again, without loss of generality we can assume that $\beta_2>0$. Moreover, notice that due to equation \eqref{int_eliptic} and the current form of $F_{\phi_c}$, we have $\phi_c(x)\in[-\beta_2,\beta_2]$. On the other hand, from \eqref{int_eliptic} we also obtain that $\beta_1$, $\beta_2$ satisfy the equations \[
\beta_2^2-\beta_1^2=2 \quad \hbox{and}\quad \beta_1^2\beta_2^2=4A_{\varphi_c}.
\]
We proceed similarly as before. In fact, in order to write the equation in a standard (easily solvable) form, we define the new variables $\psi$ and $\kappa$ given by: \[
\psi(x):=\tfrac{1}{\beta_2}\phi_c(x) \quad \hbox{and}\quad \kappa^2:=\tfrac{\beta_2^2}{\beta_1^2+\beta_2^2}.
\]
Thus, by plugging these new variables into equation \eqref{int_eliptic} we get \begin{align}\label{aux_eq_one_cn}
(\psi')^2=-\dfrac{\beta_2^2}{2\omega_{sl}}(\psi^2+\tfrac{\beta_1^2}{\beta_2^2})(\psi^2-1).
\end{align}
Now, in order to write the equation in a standard form, we change variables once again by defining $\eta$ given by the relation $\psi^2=:\cos^2\eta$. Hence, by differentiating the previous relation we obtain that $2\psi\psi'=-2\eta'\sin\eta\cos\eta$ , and therefore, replacing into equation \eqref{aux_eq_one_cn} we obtain \[
(\eta')^2=\dfrac{\beta_1^2+\beta_2^2}{2\omega_{sl}}(1-\kappa^2\sin^2\eta),
\]
where we have used the fact that $\cos^2x=1-\sin^2x$. Then, in the same fashion as before, by using Jacobi elliptic functions we deduce that \[
\psi^2(x)=1-\mathrm{sn}^2(\ell x;\kappa)=\mathrm{cn}^2(\ell x,\kappa), \quad\hbox{where}\quad \ell^2:=\tfrac{1}{2\omega_{sl}}(\beta_1^2+\beta_2^2).
\]
Hence, going back to our original variable we obtain $\phi_c(x)$ is given by $\phi_c=\beta_2\mathrm{cn}(\ell x;\kappa)$. Thus, recalling that  $\beta_2^2-\beta_1^2=2$, and since we have assumed $\beta_2$ positive, we infer that $\beta_2>\sqrt{2}$. Therefore, we have found a second periodic wave solution to equation \eqref{phif}
given by \begin{align}\label{cn_u_kappa}
u(t,x):=\beta_2\mathrm{cn}\big(\ell(x-ct);\kappa\big)\quad \hbox{where}\quad  \kappa^2=\dfrac{\beta_2^2}{2\beta_2^2-2}\quad \hbox{and}\quad \ell^2=\dfrac{\beta_2^2-1}{\omega_{sl}}.
\end{align}
Finally, we recall that $\mathrm{cn}(\cdot,\kappa)$ has fundamental period $4K(\kappa)$ (see Section \eqref{preliminaries}), and hence, $u(t,x)$ has fundamental period (denoted by $T_{\mathrm{cn}}$ from now on): \begin{align}\label{period_cn}
T_{\mathrm{cn}}=\tfrac{4\sqrt{\omega_{sl}}}{\sqrt{\beta_2^2-1}}K(\kappa),
\end{align}
and just as before, we conclude that a priori its period (wavelength) depends on its speed $c$. However, we shall prove again that by taking advantage of $\beta_2$, it is possible to disengage $T_{\mathrm{cn}}$ from $c$, and hence, for $T_{\mathrm{cn}}$ fixed, there exists a whole family of traveling waves solutions with different speeds and the same period (see Proposition \ref{MT_CN_CURVE}).

\begin{rem}[Range of the wavelength, second case] Notice that in this case we only have one interesting scenario: In fact, from \eqref{cn_u_kappa} we immediately see that $\kappa(\beta_2) \to 1$ as $\beta_2\to \sqrt{2}^+$. Thus, recalling that $K(1)=+\infty$, we deduce that in this case $T_{\mathrm{cn}}\to +\infty$. The case when we let $\beta_2$ tends to $+\infty$ we obtain a wave which oscillates faster and faster,  with a greater amplitude each time (so that in this case $T_{\mathrm{cn}}\to0$).
\end{rem}

\begin{rem}
It is worth to notice that for any given period $L>0$ and any given speed \[
\vert c\vert \in(1,+\infty),
\] there exists a unique pair $(\beta_2,A_{\phi_c})\in (\sqrt{2},+\infty)\times (0,+\infty)$ such that the corresponding $\mathrm{cn}(\cdot,\cdot)$ wave solution found in \eqref{cn_u_kappa} satisfies: \[
T_{\mathrm{cn}}=L.
\]
In fact, it is enough to notice that in this case\footnote{We shall rigorously prove this inequality in the proof of Proposition \ref{MT_CN_CURVE} below.} we have $\tfrac{d}{d\beta_2}T_{\mathrm{cn}}<0$ for all $\beta_2\in(\sqrt{2},+\infty)$, together with the fact that $T_{\mathrm{cn}}((\sqrt{2},+\infty))=(0,+\infty)$. Then, we conclude by applying the Implicit Function Theorem. 
\end{rem}

Gathering all the above information we are in position to conclude the following Proposition.

\begin{prop}[Smooth curve of cnoidal waves]\label{MT_CN_CURVE}
Let $L>0$ be arbitrary but fixed. Then, for any speed $c$ satisfying $\vert c\vert \in\left(1,+\infty\right)$, there exists a unique $\beta_2\in(\sqrt{2},+\infty)$ such that the dnoidal wave solution to equation \eqref{phif} given by  \[
u(t,x)=\beta_2\mathrm{cn}\big(\ell(x-ct);\kappa\big), \quad \hbox{where}\quad \ell^2=\tfrac{\beta_2^2-1}{\omega_{sl}} \quad \hbox{and}\quad \kappa^2=\tfrac{\beta_2^2}{2\beta_2^2-2},
\]
has fundamental period $T_{\mathrm{cn}}=L$ and satisfies equation \eqref{solit_eq}, where $\omega_{sl}=c^2-1$. Moreover, the map $c\mapsto u(0,x)\in H^1(\T_L)$ is smooth.
\end{prop}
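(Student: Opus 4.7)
The proof strategy is identical to that of Proposition \ref{MT_DN_CURVE}. In view of the remark immediately preceding the statement, it suffices to verify that the map $\beta_2 \mapsto T_{\mathrm{cn}}(\beta_2)$ defined by \eqref{period_cn} is strictly decreasing on $(\sqrt{2}, +\infty)$, together with the boundary behavior $T_{\mathrm{cn}}\to +\infty$ as $\beta_2\to\sqrt{2}^+$ and $T_{\mathrm{cn}}\to 0$ as $\beta_2\to +\infty$; then the Implicit Function Theorem applied to the relation $T_{\mathrm{cn}}(\beta_2) = L$ (with $\omega_{sl}=c^2-1$ fixed) yields both the unique existence of $\beta_2 = \beta_2(c)$ and the smoothness of the map $c \mapsto u(0,\cdot)\in H^1(\T_L)$. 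Both boundary limits follow at once from the explicit form of $\kappa(\beta_2)$: as $\beta_2 \to \sqrt{2}^+$ one has $\kappa\to 1^-$ so that $K(\kappa)\to +\infty$ by \eqref{EK_zero_one} while the prefactor $(\beta_2^2-1)^{-1/2}$ remains bounded, whereas as $\beta_2\to +\infty$ one has $\kappa^2\to 1/2$, so $K(\kappa)$ stays bounded while the prefactor vanishes.

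The monotonicity $\frac{d}{d\beta_2}T_{\mathrm{cn}} < 0$ will in fact be substantially shorter than its dnoidal analogue, because both factors of $T_{\mathrm{cn}}=4\sqrt{\omega_{sl}}\,(\beta_2^2-1)^{-1/2}K(\kappa(\beta_2))$ turn out to be positive and strictly decreasing in $\beta_2$, so no competition of signs has to be resolved. Differentiating $\kappa^2=\frac{\beta_2^2}{2(\beta_2^2-1)}$ directly one finds
\[
\frac{d\kappa}{d\beta_2}=-\frac{\beta_2}{2\kappa\,(\beta_2^2-1)^2}<0,
\]
so $\kappa$ is a strictly decreasing function of $\beta_2$ on $(\sqrt{2},+\infty)$. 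Combined with the classical monotonicity $K'(\kappa)>0$ recorded in \eqref{Eprime} (equivalently, with the identity $K'(\kappa)=\frac{E(\kappa)-(1-\kappa^2)K(\kappa)}{\kappa(1-\kappa^2)}$ from \eqref{ek_derivative}), this shows that $\beta_2\mapsto K(\kappa(\beta_2))$ is strictly decreasing. Since the prefactor $(\beta_2^2-1)^{-1/2}$ is manifestly strictly decreasing on $(\sqrt{2},+\infty)$ as well, the product $T_{\mathrm{cn}}(\beta_2)$ is a product of two positive strictly decreasing functions, hence is itself strictly decreasing.

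The main (and essentially only) obstacle is therefore the bookkeeping needed to check the sign of $\frac{d\kappa}{d\beta_2}$ and to verify that $1-\kappa^2=\frac{\beta_2^2-2}{2(\beta_2^2-1)}>0$ throughout $(\sqrt{2},+\infty)$ so that the formula for $K'(\kappa)$ is applicable; no auxiliary function analogous to $F(\beta_1)=E(\kappa)+(\beta_1^2-2)K(\kappa)$ from the dnoidal case needs to be introduced. With the strict monotonicity of $T_{\mathrm{cn}}(\beta_2)$ and the boundary limits in hand, a direct application of the Implicit Function Theorem finishes the proof exactly as in Proposition \ref{MT_DN_CURVE}.
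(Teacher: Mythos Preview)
Your proof is correct and uses the same basic ingredients as the paper's (the sign of $\tfrac{d\kappa}{d\beta_2}$ and the monotonicity $K'>0$), but you package the argument more cleanly. The paper differentiates $T_{\mathrm{cn}}$ directly after rewriting $(\beta_2^2-1)^{-1/2}=\sqrt{2}\beta_2^{-1}\kappa$, then substitutes the explicit formula $K'(\kappa)=\tfrac{E-(1-\kappa^2)K}{\kappa(1-\kappa^2)}$ and simplifies the resulting expression to $-\tfrac{\kappa}{\beta_2}K-\tfrac{1}{\sqrt{2}(\beta_2^2-1)^{3/2}(1-\kappa^2)}E<0$. Your observation that $T_{\mathrm{cn}}$ is a product of two positive strictly decreasing factors bypasses this algebraic simplification entirely and makes the argument shorter; the paper's route, on the other hand, produces an explicit formula for $\tfrac{d}{d\beta_2}T_{\mathrm{cn}}$ in terms of $E$ and $K$, which could in principle be reused downstream. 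Both approaches are valid and lead to the same application of the Implicit Function Theorem.
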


\begin{proof}
In fact, just as in the previous case, it only remains to prove inequality $\tfrac{d}{d\beta_2}T_{\mathrm{cn}}<0$. Indeed, first of all notice that we can rewrite $(\beta_2^2-1)^{-1/2}=\sqrt{2}\beta_2^{-1}\kappa$. Hence, \begin{align}
\dfrac{d}{d\beta_2}T_{\mathrm{cn}}=-\dfrac{4\sqrt{2\omega_{sl}}\kappa}{\beta_2^2}K+\dfrac{4\sqrt{2\omega_{sl}}\kappa'}{\beta_2}K+\dfrac{4\sqrt{2\omega_{sl}}\kappa'\kappa}{\beta_2}\dfrac{dK}{d\kappa}.
\end{align}
Then, the problem is reduced to prove $
-\tfrac{\kappa}{\beta_2}K(\kappa)+\kappa'K(\kappa)+\kappa'\kappa K'(\kappa)<0$. Now we recall again that due to the explicit form of $\kappa$ and the complete elliptic integral $K$, we have the following formulas (see \eqref{ek_derivative}):
\[
\dfrac{dK}{d\kappa}=\dfrac{E(\kappa)-(1-\kappa^2) K(\kappa)}{\kappa(1-\kappa^2)}\quad\hbox{and}\quad \dfrac{d\kappa}{d\beta_2}=-\dfrac{1}{\sqrt{2}(\beta_2^2-1)^{3/2}}.
\]
Thus, gathering both identities we obtain that \[
-\dfrac{\kappa}{\beta_2}K+\kappa'K+\kappa'\kappa K'=-\dfrac{\kappa}{\beta_2}K-\dfrac{1}{\sqrt{2}(\beta_2^2-1)^{3/2}(1-\kappa^2)}E<0,
\]
what concludes the proof.
\end{proof}

\begin{figure}[h!]
   \centering
   \includegraphics[scale=0.45]{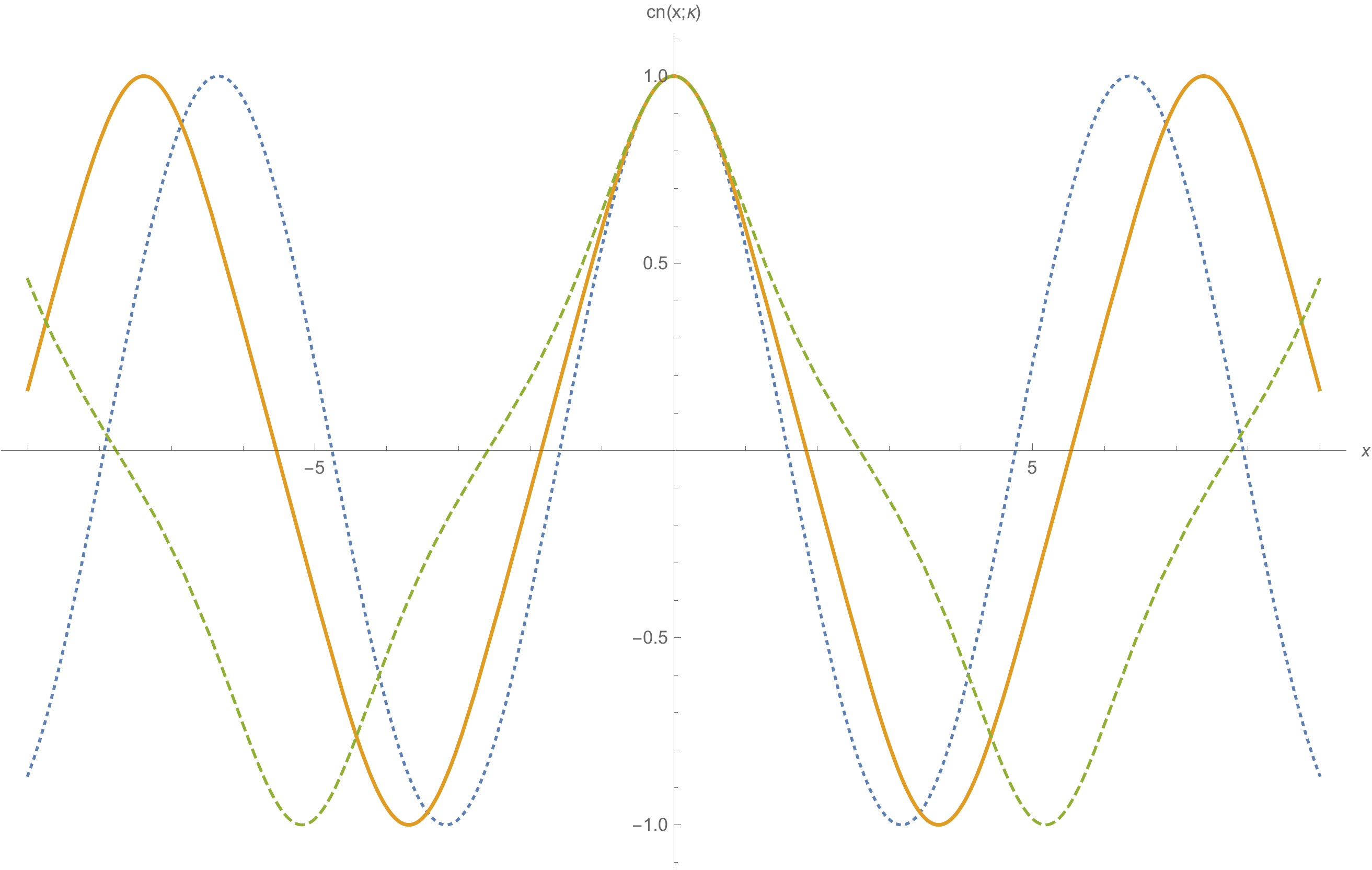}
   \caption{Cnoidal wave curve for different values of $\kappa$. The dotted blue line represents $\mathrm{cn}(x;\kappa)$ for $\kappa=0.2$. On the thick orange line we have $\mathrm{dn}(x;\kappa)$ for $\kappa=0.7$. Finally, the dashed green line represents $\mathrm{dn}(x;\kappa)$ for $\kappa=0.95$. Notice that all of them are spatially-even and have two zeros in each period.}\label{Fig:1}
\end{figure}

\subsection{Sub-luminal waves}

In this subsection we seek for \emph{sub-luminal} waves, that is, solutions to equation \eqref{phif} of the form $u(t,x)=\phi_c(x-ct)$ where $c$ is assumed to takes values $c\in(-1,1)$. Then, in a similar fashion as in the previous subsection, by plugging these type of functions into the equation we obtain that if $u(t,x)$ is a solution to equation \eqref{phif}, then $\phi_c$ must satisfy: \begin{align}\label{subluminal_der}
-(1-c^2)\phi_c''=\phi_c-\phi_c^3.
\end{align}
Hence, by multiplying both sides of the equation by $\phi_c'$ and integrating, we obtain that $\phi_c$ must to satisfies the first-order differential equation in quadrature form: \begin{align}\label{subluminal_int}
(\phi_c')^2=\dfrac{1}{2\omega_{\mathrm{sb}}}\left(\phi_c^4-2\phi_c^2-4A_{\phi_c}\right)=\dfrac{1}{2\omega_{\mathrm{sb}}}F_{\phi_c}(\phi_c),
\end{align}
where $A_{\phi_c}$ stands for an arbitrary integration constant, $w_{\mathrm{sb}}:=1-c^2$ and the polynomial function $F_{\phi_c}$ is given by \[
F_{\phi_c}(z):=z^4-2z^2-4A_{\phi_c}.
\] 
Thus, in contrast with the previous cases, now  we assume that $F_{\phi_c}$ has exactly four real roots, that is, we assume that $F_{\phi_c}$ can be factorize as: \begin{align}\label{roots_sb_sn}
F_{\phi_c}(z)=(z^2-\beta_1^2)(z^2-\beta_2^2) \quad \hbox{where} \quad \beta_1^2+\beta_2^2=2 \quad \hbox{ and }\quad \beta_1^2\beta_2^2=-4A_{\varphi_c}.
\end{align}
Without loss of generality we can also assume that $\beta_1>\beta_2>0$. Now, we seek for sign changing solutions, and hence equation \eqref{subluminal_int} imposes the additional constraint $\phi_c(x)\in [-\beta_2,\beta_2]$. Thus, in the same fashion as before, in order to write the equation in a more standard (easily solvable) form, we define the auxiliary variables $\psi:=\beta_2^{-1}\phi_c$ and $\kappa^2=\beta_2^{-2}(\beta_1^2-\beta_2^2)$. Then, equation \eqref{subluminal_int} becomes \begin{align}\label{subl_aux}
(\psi')^2=\dfrac{\beta_1^2}{2\omega_{\mathrm{sb}}}(1-\psi^2)\left(1-\tfrac{\beta_2^2}{\beta_1^2}\psi^2\right)
\end{align}
Notice that the latter ODE is in the form of \eqref{ode_sn}. Hence, by using the snoidal wave function defined in \eqref{sn_preliminaries} we obtain the explicit solution to equation \eqref{phif}:\begin{align}\label{subl_proof_def_sol}
u(t,x):=\beta_2\mathrm{sn}\big(\ell(x-ct)\big) \quad \hbox{where} \quad \ell:=\dfrac{\beta_1}{\sqrt{2\omega_{\mathrm{sb}}}},\quad \kappa:=\dfrac{\beta_2}{\beta_1} \quad \hbox{and}\quad \beta_2^2:=2-\beta_1^2.
\end{align}
Notice that from \eqref{roots_sb_sn} and the fact that $\beta_1>\beta_2$ we infer that $\beta_1\in(1,\sqrt{2})$. Finally, we recall that $\mathrm{sn}(\cdot,\kappa)$ has period $4K(\kappa)$, and hence, $u(t,x)$ has fundamental period (wavelength, denoted by $T_{\mathrm{sb}}$) given by \begin{align}\label{sb_period}
T_{\mathrm{sb}}:=\tfrac{4\sqrt{2 \omega_{\mathrm{sb}}}}{\beta_1}K(\kappa).
\end{align}

\begin{rem}[Range of the wavelength] In this case we have the following scenarios:

\smallskip

\textbf{Case $\beta_1\to 1^+$:} By taking limit in \eqref{subl_proof_def_sol} it follows that $\kappa(\beta_1) \to1^-$ as $\beta_1$ tends to $1^+$. Then, recalling that $K(1)=+\infty$, we obtain that  $T_{\mathrm{sb}}\to +\infty$ as $\beta_1$ tends to $1^+$.

\smallskip

\textbf{Case $\beta_1\to\sqrt{2}^-$:} In this case, by using formula \eqref{subl_proof_def_sol} again, we deduce that $\kappa(\beta_1)\to0^{+}$. Then, recalling that $K(0)=\tfrac{\pi}{2}$, we obtain $T_{\mathrm{sb}}\to2\pi\sqrt{\omega_{\mathrm{sb}}}$. 
\end{rem} 

Gathering all the above information we are in position to conclude the following Proposition. 
\begin{prop}[Smooth curve of snoidal waves]\label{MT_SN_CURVE}
Let $L>0$ be arbitrary but fixed. Then, for any speed $c$ satisfying \begin{align}\label{csb_condition}
\vert c\vert  \in \left(c_{\mathrm{sb}},1\right) \quad \hbox{where} \quad c_{\mathrm{sb}}^2=\max\left\{0,1-\tfrac{L^2}{4\pi^2}\right\},
\end{align}
there exists a unique $\beta_1=\beta_1(c)\in(1,\sqrt{2})$ such that the dnoidal wave solution to equation \eqref{phif} given by \begin{align}\label{sn_sol_def}
u(t,x):=\beta_2\mathrm{sn}\big(\ell(x-ct)\big) \quad \hbox{where} \quad \ell:=\dfrac{\beta_1}{\sqrt{2\omega_{\mathrm{sb}}}},\quad \kappa:=\dfrac{\beta_2}{\beta_1} \quad \hbox{and}\quad \beta_2^2:=2-\beta_1^2,
\end{align}
has fundamental period $T_{\mathrm{sb}}=L$ and satisfies equation \eqref{subluminal_der}, where $\omega_{sb}=1-c^2$. Moreover, the map $c\mapsto u(0,x)\in H^1(\T_L)$ is smooth.
\end{prop}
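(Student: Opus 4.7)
The strategy mirrors the one used for Propositions \ref{MT_DN_CURVE} and \ref{MT_CN_CURVE}: the fact that \eqref{sn_sol_def} satisfies \eqref{subluminal_der} with fundamental period given by \eqref{sb_period} has already been verified in the previous discussion, so the only nontrivial task is to invert the period map $\beta_1\mapsto T_{\mathrm{sb}}(\beta_1)$ on $(1,\sqrt{2})$ and conclude by the Implicit Function Theorem. From the limit analysis recorded in the remark just above the proposition, the image of $(1,\sqrt{2})$ under $T_{\mathrm{sb}}(\cdot)$ lies between $2\pi\sqrt{\omega_{\mathrm{sb}}}$ and $+\infty$; once monotonicity is established, the range is exactly $(2\pi\sqrt{\omega_{\mathrm{sb}}},+\infty)$. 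Then the condition $L\in (2\pi\sqrt{\omega_{\mathrm{sb}}},+\infty)$ is equivalent to $L^2>4\pi^2(1-c^2)$, which rearranges to $c^2>1-L^2/(4\pi^2)$, matching precisely condition \eqref{csb_condition}.

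The main task is therefore to prove that $\frac{d}{d\beta_1}T_{\mathrm{sb}}<0$ for every $\beta_1\in(1,\sqrt{2})$. I would first record, from $\kappa^2=\beta_2^2/\beta_1^2=2/\beta_1^2-1$, the basic identities
\[
\dfrac{d\kappa}{d\beta_1}=-\dfrac{2}{\kappa\beta_1^3}, \qquad 1-\kappa^2=\dfrac{2(\beta_1^2-1)}{\beta_1^2},\qquad \kappa^2(1-\kappa^2)=\dfrac{2(2-\beta_1^2)(\beta_1^2-1)}{\beta_1^4},
\]
and then differentiate \eqref{sb_period} directly, obtaining
\[
\dfrac{dT_{\mathrm{sb}}}{d\beta_1}=-\dfrac{4\sqrt{2\omega_{\mathrm{sb}}}}{\beta_1^2}K(\kappa)+\dfrac{4\sqrt{2\omega_{\mathrm{sb}}}}{\beta_1}\cdot\dfrac{dK}{d\kappa}\cdot\dfrac{d\kappa}{d\beta_1}.
\]
Using the ODE \eqref{ek_derivative} for $K$ and the expression for $d\kappa/d\beta_1$, the right-hand side can be written as
\[
-\dfrac{4\sqrt{2\omega_{\mathrm{sb}}}}{\beta_1^2}\left[\,K(\kappa)+\dfrac{\beta_1^2\bigl(E(\kappa)-(1-\kappa^2)K(\kappa)\bigr)}{(2-\beta_1^2)(\beta_1^2-1)}\,\right].
\]

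What makes this case pleasant (and easier than the dnoidal one treated in Proposition \ref{MT_DN_CURVE}) is that both terms inside the bracket are automatically positive: the prefactor $(2-\beta_1^2)(\beta_1^2-1)$ is strictly positive on $(1,\sqrt{2})$, and the quantity $E(\kappa)-(1-\kappa^2)K(\kappa)$ is positive for every $\kappa\in(0,1)$. The latter follows from a short one-line check: it vanishes at $\kappa=0$ and its derivative with respect to $\kappa$ equals $\kappa K(\kappa)>0$ (using $E'=(E-K)/\kappa$ and the ODE for $K'$). Thus $dT_{\mathrm{sb}}/d\beta_1<0$ throughout $(1,\sqrt{2})$, as desired.

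Finally, combining strict monotonicity with the two limiting values $T_{\mathrm{sb}}\to+\infty$ as $\beta_1\to 1^+$ and $T_{\mathrm{sb}}\to 2\pi\sqrt{\omega_{\mathrm{sb}}}$ as $\beta_1\to\sqrt{2}^-$, the map $\beta_1\mapsto T_{\mathrm{sb}}(\beta_1)$ is a smooth diffeomorphism from $(1,\sqrt{2})$ onto $(2\pi\sqrt{\omega_{\mathrm{sb}}},+\infty)$. The Implicit Function Theorem applied to $T_{\mathrm{sb}}(\beta_1;c)-L=0$ then yields a unique $\beta_1=\beta_1(c)$ for every $c$ in the range \eqref{csb_condition}, together with the smooth dependence of $\beta_1$, hence of $\ell$, $\kappa$ and the profile $u(0,\cdot)$ on $c$; this also passes to $H^1(\T_L)$ because the Jacobi function $\mathrm{sn}(\cdot;\kappa)$ depends smoothly on its parameters. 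The mild obstacle, as usual, is just to keep the algebra under control when isolating the sign of $dT_{\mathrm{sb}}/d\beta_1$; here it collapses to the elementary positivity of $E-(1-\kappa^2)K$, so no delicate comparison analogous to the one needed in the dnoidal case is required.
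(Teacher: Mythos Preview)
Your proof is correct and follows essentially the same route as the paper: show $\tfrac{d}{d\beta_1}T_{\mathrm{sb}}<0$ on $(1,\sqrt{2})$ and conclude by the Implicit Function Theorem. The only difference is that you take a small detour by expanding $K'(\kappa)$ via \eqref{ek_derivative} and then invoking the positivity of $E-(1-\kappa^2)K$; the paper avoids this step entirely by observing directly that in
\[
\dfrac{dT_{\mathrm{sb}}}{d\beta_1}=-\dfrac{4\sqrt{2\omega_{\mathrm{sb}}}}{\beta_1^2}K(\kappa)+\dfrac{4\sqrt{2\omega_{\mathrm{sb}}}}{\beta_1}K'(\kappa)\dfrac{d\kappa}{d\beta_1},
\]
both summands are already negative, since $K'(\kappa)>0$ and $\tfrac{d\kappa}{d\beta_1}<0$.
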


\begin{proof}
See the Appendix \ref{app_dn_sb_case}.
\end{proof}

\begin{rem}[Real-valued periodic standing wave solutions]
It is worth to notice that in the previous proposition, if $L>2\pi$ we may consider letting $c\to0^+$. In fact, in this case we have found a real-valued periodic standing wave solution to equation \eqref{phif}. Moreover, notice that once setting $c=0$, by letting $\beta_1\to 1^+$ we have that: \[
\ell \to \tfrac{1}{\sqrt{2}} \quad \hbox{ and } \quad K(\kappa)\to+\infty.
\]
Additionally, due to the fact that $\mathrm{sn}(x,1)=\tanh(x)$, in this case we formally recover the standard Kink solution \begin{align}\label{standing_kink}
H(x):=\tanh\big(\tfrac{x}{\sqrt{2}}\big).
\end{align}
We refer to \cite{HPW,KMM} for some studies regarding the orbital and asymptotic stability properties of the Kink solution. Of course, since we are setting $c=0$, for each period $L>2\pi$, there exist \textbf{only one} of these standing waves. 
\end{rem}

\begin{figure}[h!]
   \centering
   \includegraphics[scale=0.45]{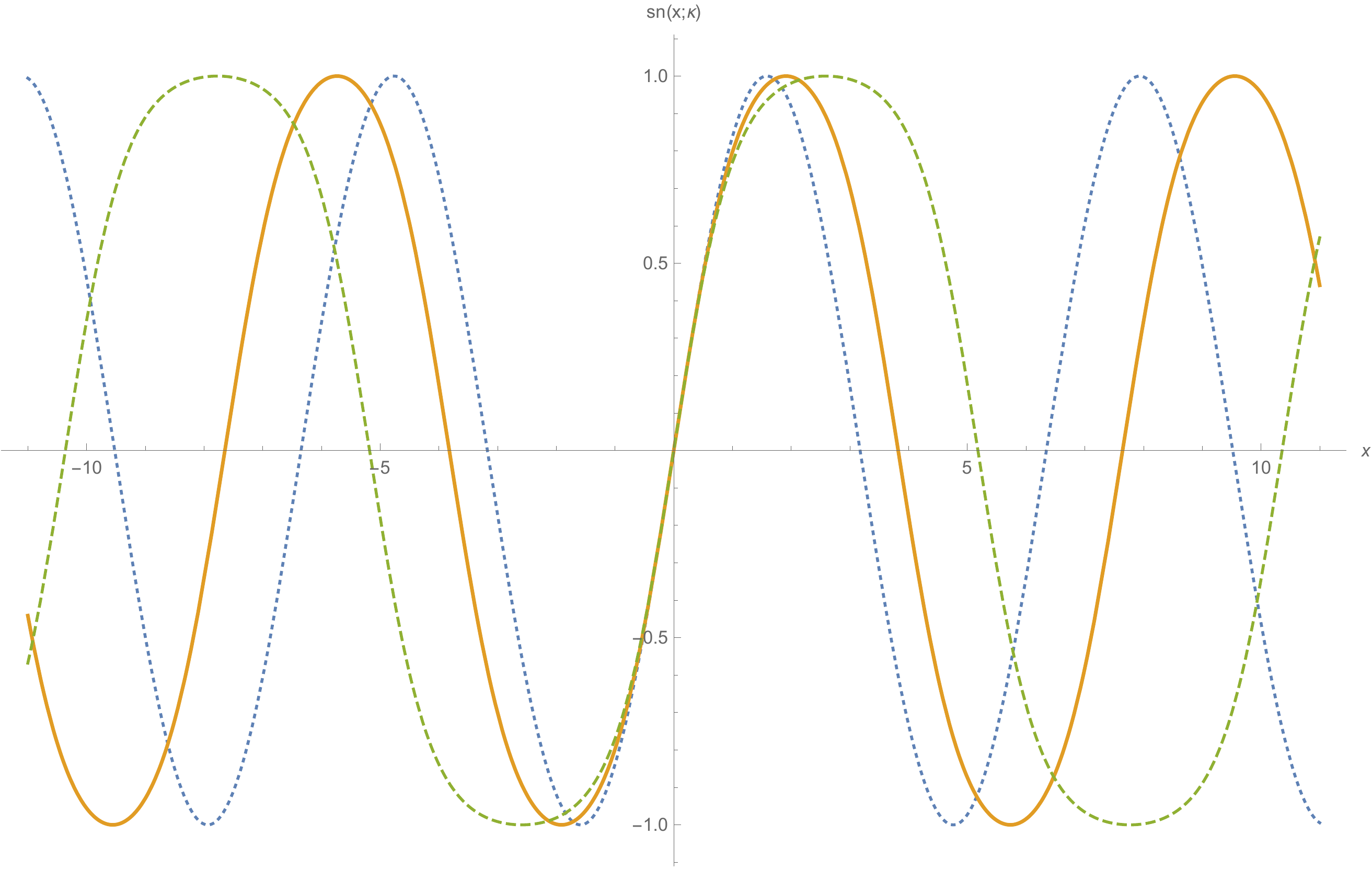}
   \caption{Snoidal wave for different values of $\kappa$. The dotted blue line represents $\mathrm{sn}(x;\kappa)$ for $\kappa=0.2$. On the thick orange line we have $\mathrm{dn}(x;\kappa)$ for $\kappa=0.75$. Finally, the dashed green line represents $\mathrm{dn}(x;\kappa)$ for $\kappa=0.95$. Notice that all of them are spatially-odd and have two zeros in each period.}\label{Fig:1}
\end{figure}

\medskip

\section{Existence of a smooth curve of periodic standing waves: Complex-valued case}\label{curve_c}

Our goal now is to establish the existence of a smooth curve of complex-valued periodic standing wave solutions to equation \eqref{phif}. Specifically, in this case we seek for solutions of the form $u(t,x)=e^{ict}\phi_c(x)$ where $\phi_c$ is assumed to be a real-valued function. Thus, by plugging this specific form of $u(t,x)$ into \eqref{phif} we obtain that if $u(t,x)$ is a solution to the equation, then $\phi_c$ must satisfy: \begin{align}\label{complex_solit_eq}
\phi_c''+(1+c^2)\phi_c-\phi_c^3=0.
\end{align}
Hence, by multiplying both sides of the equation by $\phi_c'$, after integration we obtain the following first-order differential equation in quadrature form: \begin{align}\label{complex_eliptic}
(\phi_c')^2=\dfrac{1}{2}\big(\phi_c^4-2\omega_c\phi_c^2+4A_{\phi_c}\big)=\dfrac{1}{2}F_{\phi_c}(\phi_c),
\end{align}
where, as before, $A_{\phi_c}$ stands for the integration constant, $\omega_{c}:=1+c^2$ and the polynomial function $F_{\phi_c}$ is given by \[
F_{\phi_c}(z):=z^4-2\omega_c z^2+4A_{\phi_c}.
\]
We recall again that $A_{\phi_c}$ is a nonzero (arbitrary) constant. Now, we assume we are in the particular case where $F_{\phi_c}$ takes the form:
\[
F_{\phi_c}=(z^2-\beta_1^2)(z^2-\beta_2^2) \quad \hbox{where} \quad
\beta_1^2+\beta_2^2=2\omega_c \quad \hbox{and}\quad \beta_1^2\beta_2^2=4A_{\phi_c}.
\]
Then, without loss of generality we also assume that $\beta_1>\beta_2>0$. In this case we seek for sign changing solutions, hence $\phi_c$ satisfies $\phi_c(x)\in[-\beta_2,\beta_2]$. For the sake of simplicity we define the auxiliary variable $ \psi := \beta_2^{-1}\phi_c$. Hence, by plugging this new function into the equation we obtain that $\psi$ satisfies
\[
(\psi')^2=\dfrac{\beta_1^2}{2}\big(1-\tfrac{\beta_2^2}{\beta_1^2}\psi^2\big)(1-\psi^2).
\]
On the other hand, we already know how to solve this equation, which has snoidal wave solutions. Indeed, by using \eqref{ode_sn} we obtain that the solution to equation \eqref{phif} is given by:
\begin{align}\label{proof_sn_def}
u(t,x)=\beta_2 e^{ict}\mathrm{sn}\big(\ell x;\kappa\big) \quad \hbox{where}\quad \ell:=\tfrac{\beta_1}{\sqrt{2}}, \quad \kappa:=\tfrac{\beta_2}{\beta_1} \quad \hbox{and}\quad \beta_2 = \big(2c^2+2-\beta_1^2\big)^{1/2}.
\end{align}
We point out that in  contrast with both previous cases, now $\beta_1$ explicitly depends on the speed $c$. Moreover, recalling that $\beta_1>\beta_2$, by the relation \[
\beta_1^2=2(1+c^2)-\beta_2^2 \quad \hbox{we obtain} \quad 1+c^2<\beta_1^2<2(1+c^2).
\]
Finally, since $\mathrm{sn}(\cdot,\kappa)$ has fundamental period $4K(\kappa)$, we deduce that $u(t,x)$ has fundamental period (denoted by $T_{\mathrm{c}}$): \begin{align}\label{period_sn}
T_{\mathrm{c}}=\tfrac{4\sqrt{2}}{\beta_1}K(\kappa).
\end{align}

\begin{rem}[Range of the wavelength] In this case have the following scenarios:

\smallskip

\textbf{Case $\beta_1^2\to (1+c^2)^+$:} From \eqref{proof_sn_def} we immediately see that $\kappa(\beta_1) \to1^-$ as $\beta_1\to \omega_c$. Thus, recalling that  $K(1)=+\infty$ we obtain that $T_{\mathrm{c}}\to +\infty$.

\smallskip

\textbf{Case $\beta_1^2\to2(1+c^2)^-$:} Notice that in this case, by using formula \eqref{proof_sn_def} again, we deduce that $\kappa(\beta_1)\to0^{+}$. Then, by using that $K(0)=\tfrac{\pi}{2}$ we conclude  $T_{\mathrm{c}}\to \tfrac{2\pi}{\sqrt{\omega_c}}^+$. It is worth to notice that letting $c\to 0$ we obtain that $T_{\mathrm{c}}\to2\pi$, while by letting $c\to+\infty$ we obtain $T_{\mathrm{c}}\to0$.
\end{rem}

\begin{rem}
It is worth to notice that for any given period $L>0$ and any given speed \[
\vert c\vert \in\left(c_L,+\infty\right) \quad \hbox{where} \quad c_L^2:=\max\left\{0,\tfrac{4\pi^2}{L^2}-1\right\}
\] there exists a unique pair $(\beta_1,A_{\phi_c})\in (\sqrt{\omega_c},\sqrt{2\omega_c})\times (0,+\infty)$ such that the corresponding $\mathrm{sn}(\cdot,\cdot)$ wave solution found on \eqref{proof_sn_def} satisfies $
T_{\mathrm{c}}=L$. In particular, after some point, the shorter its period, the faster its speed.
\end{rem}

Gathering all the above information we are in position to conclude the following proposition.
\begin{prop}[Smooth curve of snoidal waves]\label{MT_SN_COMPLEX}
Let $L>0$ be arbitrary but fixed. For any speed $c$ satisfying $$\vert c\vert \in\left(c_L,+\infty\right) \quad \hbox{with}\quad c_L^2:=\max\left\{0,\tfrac{4\pi^2}{L^2}-1\right\},$$
there exists unique $\beta_1\in(\sqrt{\omega_c},\sqrt{2\omega_c})$, with $\omega_c=1+c^2$, such that the dnoidal wave solution \begin{align}\label{sn_complex_def}
u(t,x)=\beta_2e^{ict}\mathrm{sn}\big(\ell x;\kappa\big), \quad \hbox{where}\quad \ell=\dfrac{\beta_1}{\sqrt{2}}, \quad  \kappa=\dfrac{\beta_2}{\beta_1} \quad \hbox{and}\quad \beta_2^2=2\omega_c-\beta_1^2,
\end{align}
has fundamental period $T_{\mathrm{sn}}=L$ and satisfies equation \eqref{complex_solit_eq}, where $\omega_c=1+c^2$. Moreover, the map $c\mapsto u(0,x)\in H^1(\T_L)$ is smooth.
\end{prop}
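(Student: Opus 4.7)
My plan is to follow the template of Propositions \ref{MT_DN_CURVE} and \ref{MT_CN_CURVE}: the computations preceding the statement already produce the explicit form \eqref{sn_complex_def} of the candidate solution together with the period formula \eqref{period_sn}, so what remains is (i) to show strict monotonicity of $T_c$ as a function of $\beta_1$ on the range $(\sqrt{\omega_c},\sqrt{2\omega_c})$ for each fixed $c$, and (ii) to apply the Implicit Function Theorem to the equation $T_c(\beta_1,c)=L$ in order to deduce uniqueness of $\beta_1(c)$ and smooth dependence on $c$.

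The key observation that simplifies the monotonicity step, and avoids the sort of delicate $E$-$K$ inequality that appears in the proof of Proposition \ref{MT_DN_CURVE}, is to reparameterize by the elliptic modulus $\kappa$ rather than by $\beta_1$. Indeed, from the defining relations $\kappa^2=\beta_2^2/\beta_1^2$ and $\beta_1^2+\beta_2^2=2\omega_c$ given in \eqref{sn_complex_def}, one obtains the explicit identity $\beta_1^2=2\omega_c/(1+\kappa^2)$, which is a smooth strictly decreasing diffeomorphism of $(0,1)$ onto $(\sqrt{\omega_c},\sqrt{2\omega_c})$. Substituting into \eqref{period_sn} produces the remarkably clean expression
\[
T_c \;=\; \frac{4\sqrt{1+\kappa^2}}{\sqrt{\omega_c}}\, K(\kappa),
\]
which is manifestly the product of two smooth, strictly positive, strictly increasing functions of $\kappa\in(0,1)$ (the monotonicity of $K$ being recorded in \eqref{Eprime}). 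Hence $\kappa\mapsto T_c(\kappa)$ is a strictly increasing smooth diffeomorphism from $(0,1)$ onto $(2\pi/\sqrt{\omega_c},+\infty)$ (the range being identified via the asymptotics listed in the Remark preceding the proposition), and correspondingly $\beta_1\mapsto T_c(\beta_1)$ is strictly decreasing on $(\sqrt{\omega_c},\sqrt{2\omega_c})$.

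The hypothesis $|c|>c_L$ is precisely $\omega_c>4\pi^2/L^2$, equivalently $2\pi/\sqrt{\omega_c}<L$, which places $L$ inside the range of $T_c$. The monotonicity therefore yields a unique $\kappa(c)\in(0,1)$ with $T_c(\kappa(c))=L$, and hence a unique $\beta_1(c)\in(\sqrt{\omega_c},\sqrt{2\omega_c})$. Viewing $T_c$ jointly as a function of $(\kappa,c)$, the non-degeneracy $\partial_\kappa T_c>0$ established above allows the Implicit Function Theorem to promote $c\mapsto\kappa(c)$ to a smooth curve, and therefore so are $c\mapsto\beta_1(c)$, $c\mapsto\ell(c)=\beta_1(c)/\sqrt{2}$, and $c\mapsto\beta_2(c)$. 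Smoothness of the map $c\mapsto u(0,x)\in H^1(\T_L)$ then follows from the smooth dependence of the Jacobi elliptic function $\mathrm{sn}(\cdot;\cdot)$ on both of its arguments.

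I do not anticipate any serious obstacle: the $\kappa$-parameterization gives the key monotonicity essentially for free, and the remainder is a routine IFT argument. The only mild subtlety worth checking is that $\omega_c$ itself depends on $c$, so one must view $T_c(\kappa,c)-L$ as a function of both variables; but the strict inequality $\partial_\kappa T_c>0$ holds uniformly on compact subsets of $(0,1)\times\{c:|c|>c_L\}$, so the IFT applies without further ado.
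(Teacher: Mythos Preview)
Your proof is correct. The paper itself omits the proof, noting only that it follows the same lines as Propositions \ref{MT_DN_CURVE}, \ref{MT_CN_CURVE} and \ref{MT_SN_CURVE}; the closest template (the Appendix proof of Proposition \ref{MT_SN_CURVE}) differentiates the period formula \eqref{period_sn} directly with respect to $\beta_1$, computes $d\kappa/d\beta_1=-2\omega_c/(\beta_1^2\sqrt{2\omega_c-\beta_1^2})<0$, and observes that both resulting terms in $dT_c/d\beta_1$ are negative. Your reparameterization by the modulus $\kappa$ via $\beta_1^2=2\omega_c/(1+\kappa^2)$, leading to the clean product $T_c=4\sqrt{1+\kappa^2}\,K(\kappa)/\sqrt{\omega_c}$, is a genuinely neater alternative: monotonicity becomes immediate and no derivative formula for $d\kappa/d\beta_1$ is needed. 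The payoff is modest here, however, since in this particular case (unlike Proposition \ref{MT_DN_CURVE}) the direct $\beta_1$-differentiation is already short and requires no $E$--$K$ inequality. Both routes feed the same non-degeneracy into the Implicit Function Theorem and yield the smoothness of $c\mapsto u(0,\cdot)$ in identical fashion.
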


\begin{proof}
The proof follows similar lines as the ones made for Propositions \ref{MT_DN_CURVE}, \ref{MT_CN_CURVE} and \ref{MT_SN_CURVE}, with some obvious modifications, and hence we omit it.
\end{proof}

\medskip

\section{Orbital Instability:  Subluminal case}\label{mt_r_t}

\subsection{Spectral analysis}

From now on and for the rest of this section, in addition to the hypotheses given in Proposition \ref{MT_SN_CURVE}  we shall assume (without loss of generality) that $c>0$. Now, we start by recalling that from Proposition \ref{MT_SN_CURVE} we have the existence of a smooth curve of explicit solutions to equation \eqref{phif} given by:
\begin{align}\label{recall_sn}
u(t,x):=\beta_2\mathrm{sn}\big(\ell(x-ct)\big) \quad \hbox{where} \quad \ell:=\tfrac{\beta_1}{\sqrt{2\omega_{\mathrm{sb}}}},\quad \kappa:=\tfrac{\beta_2}{\beta_1} \quad \hbox{and}\quad \beta_2^2:=2-\beta_1^2.
\end{align}
Now, for any given $c\in(0,1)$, for the sake of clarity we shall denote by $\vec{\phi}_c^{\mathrm{sn}}$ the vector solution associated to \eqref{recall_sn}, while we shall write $\phi_c^{\mathrm{sn}}$ to refer to its first component. It is worth to notice that the equation solved by the snoidal traveling wave solution can be rewritten in terms of the functional $\mathcal{E}$ and $\mathcal{P}$ as: \[
\mathcal{E}'\big(\vec{\phi}_c^{\mathrm{sn}}\big)+c\mathcal{P}'\big(\vec{\phi}_c^{\mathrm{sn}}\big)=0,
\]
where $\mathcal{E}'$ and $\mathcal{P}'$ denote the Frechet derivates of $\mathcal{E}$ and $\mathcal{P}$ in $H^1_{\mathrm{per}}\times L^2_{\mathrm{per}}$ respectively. Then, the linearized Hamiltonian around $\vec{\phi}_c^{\mathrm{sn}}$ is given by: \begin{align}\label{op_vec_sn}
\vec{\mathcal{L}}_{\mathrm{sn}}:=\big(\mathcal{E}''+c\mathcal{P}''\big)(\vec{\phi}_c^{\mathrm{sn}})=\left(\begin{matrix}
-\partial_x^2-1+3 (\phi_c^{\mathrm{sn}})^2 & -c\partial_x \\ c\partial_x & 1
\end{matrix}\right)
\end{align}
It is worth to notice that $\vec{\mathcal{L}}_{\mathrm{sn}}$ can be regarded as a bounded self-adjoint operator defined on \[
\vec{\mathcal{L}}_{\mathrm{sn}}: H^2(\T_L)\times H^1(\T_L)\subset L^2(\T_L)\times L^2(\T_L)\to L^2(\T_L).
\]
Moreover, notice that with this definition it immediately follows that $\vec{\mathcal{L}}_{\mathrm{sn}}(\vec{\phi}_{c,x}^{\mathrm{sn}})\equiv 0$. On the other hand, the quadratic form $Q_{\mathrm{sn}}$ associated to $\vec{\mathcal{L}}_{\mathrm{sn}}$ is given by: \begin{align*}
Q_{\mathrm{sn}}&:=\langle\vec{\mathcal{L}}_{\mathrm{sn}}(\phi_1,\phi_2),(\phi_1,\phi_2)\rangle=\int \big(\phi_{1,x}^2-\phi_{1}^2+3 (\phi_c^{\mathrm{sn}})^2 \phi_1^2 +2c\phi_{1,x}\phi_2+\phi_2^2\big)dx
\\ & \ =\int \big((1-c^2)\phi_{1,x}^2-\phi_1^2+3(\phi_c^{\mathrm{sn}})^2\phi_1^2\big)dx+\int \big(c\phi_{1,x}+\phi_2\big)^2dx.
\end{align*}
Now, notice that from the first integral term of the latter identity we recognize the quadratic form associated to the operator \begin{align}\label{operator_sn_scalar}
\mathcal{L}_{\mathrm{sn}}:=-(1-c^2)\partial_x^2-1+ 3(\phi_c^{\mathrm{sn}})^2.
\end{align}

\begin{prop}\label{spectra_sn}
Under the assumptions of Proposition \ref{MT_SN_CURVE}, the operator $\mathcal{L}_{\mathrm{sn}}$ given in \eqref{operator_sn_scalar} defined on $L^2(\T_L)$ with domain $H^2(\T_L)$ defines a bounded self-adjoint operator with a unique negative eigenvalue. Moreover, zero is the second eigenvalue, which is simple, and the rest of the spectrum is discrete and bounded away from zero.
\end{prop}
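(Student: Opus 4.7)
The plan is to reduce $\mathcal{L}_{\mathrm{sn}}$ to a classical Lam\'e operator of order $n=2$ via a rescaling, and then read off its spectrum from the well-known Lam\'e polynomials. After the substitution $y:=\ell(x-ct)$, and using the relations $(1-c^2)\ell^2=\beta_1^2/2$ together with $3(\phi_c^{\mathrm{sn}})^2=3\beta_1^2\kappa^2\mathrm{sn}^2(y;\kappa)$, one checks that $\mathcal{L}_{\mathrm{sn}}$ is unitarily equivalent (up to the positive factor $\beta_1^2/2$ and the additive shift $-1$) to the Lam\'e operator
\[
\widetilde{\mathcal{L}}_0:=-\partial_y^2+6\kappa^2\mathrm{sn}^2(y;\kappa)
\]
acting on $L^2([0,4K(\kappa)])$ with periodic boundary conditions. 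Self-adjointness is then automatic from Hill operator theory, and the spectra correspond with multiplicities via the affine map $\mu=\tfrac{\beta_1^2}{2}\lambda-1$.

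Next, I will exhibit three explicit eigenpairs of $\widetilde{\mathcal{L}}_0$ coming from the $n=2$ Lam\'e polynomials. Setting $\Delta:=\sqrt{1-\kappa^2+\kappa^4}$, a careful but routine differentiation using the Jacobi identities recalled in Section \ref{preliminaries} yields
\begin{align*}
\widetilde{\mathcal{L}}_0\bigl(1-(1+\kappa^2-\Delta)\mathrm{sn}^2(\cdot;\kappa)\bigr)&=2(1+\kappa^2-\Delta)\bigl(1-(1+\kappa^2-\Delta)\mathrm{sn}^2(\cdot;\kappa)\bigr),\\
\widetilde{\mathcal{L}}_0\bigl(\mathrm{cn}(\cdot;\kappa)\mathrm{dn}(\cdot;\kappa)\bigr)&=(1+\kappa^2)\,\mathrm{cn}(\cdot;\kappa)\mathrm{dn}(\cdot;\kappa),\\
\widetilde{\mathcal{L}}_0\bigl(\mathrm{sn}(\cdot;\kappa)\mathrm{dn}(\cdot;\kappa)\bigr)&=(1+4\kappa^2)\,\mathrm{sn}(\cdot;\kappa)\mathrm{dn}(\cdot;\kappa).
\end{align*}
The algebraic identity $\Delta^2-\kappa^4=1-\kappa^2>0$ forces $\Delta>\kappa^2$, so the function $f_-(y):=1-(1+\kappa^2-\Delta)\mathrm{sn}^2(y;\kappa)$ is strictly positive. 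Uniqueness of the positive periodic eigenfunction of a Hill operator identifies $\lambda_0=2(1+\kappa^2-\Delta)$ as the simple smallest eigenvalue of $\widetilde{\mathcal{L}}_0$.

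I then apply the Hill--Borg oscillation theorem to locate the next eigenvalues. Both $\mathrm{cn}\cdot\mathrm{dn}$ and $\mathrm{sn}\cdot\mathrm{dn}$ vanish exactly twice on $[0,4K(\kappa))$ (since $\mathrm{dn}\geq\sqrt{1-\kappa^2}>0$). Since periodic eigenfunctions with exactly two zeros correspond to the pair $\{\lambda_1,\lambda_2\}$ and the associated eigenspace has total dimension at most two, these two linearly independent functions with distinct eigenvalues $1+\kappa^2<1+4\kappa^2$ must exhaust this pair, yielding $\lambda_1=1+\kappa^2$ and $\lambda_2=1+4\kappa^2$, both simple; every remaining periodic eigenvalue of $\widetilde{\mathcal{L}}_0$ then satisfies $\lambda_n\geq \lambda_2$.

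Translating back via $\mu=\tfrac{\beta_1^2}{2}\lambda-1$ and using $\beta_1^2(1+\kappa^2)=\beta_1^2+\beta_2^2=2$ and $\beta_1^2=2-\beta_2^2$, I obtain
\[
\mu_0=1-\sqrt{3\beta_1^4-6\beta_1^2+4},\qquad \mu_1=0,\qquad \mu_2=\tfrac{3}{2}\beta_2^2,
\]
with $\mu_n\geq \mu_2$ for every $n\geq 2$. The identity $3\beta_1^4-6\beta_1^2+4-1=3(\beta_1^2-1)^2>0$ for $\beta_1\in(1,\sqrt{2})$ yields $\mu_0<0$; combined with $\mu_1=0<\mu_2$, this establishes every claim. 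The main obstacle is the algebraic verification of the three Lam\'e eigen-identities, which requires careful bookkeeping with Jacobi derivatives but no deeper input than the standard identities from Section \ref{preliminaries}.
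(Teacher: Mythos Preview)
Your proof is correct and follows essentially the same route as the paper: rescale $\mathcal{L}_{\mathrm{sn}}$ to the $n=2$ Lam\'e operator, identify $\mathrm{cn}\cdot\mathrm{dn}$ and $\mathrm{sn}\cdot\mathrm{dn}$ as the eigenfunctions with exactly two zeros on $[0,4K)$, and invoke Floquet/oscillation theory to place them as the second and third eigenvalues, with $\sigma_1=1+\kappa^2$ corresponding to the zero eigenvalue of $\mathcal{L}_{\mathrm{sn}}$. The only notable difference is that you go further and explicitly exhibit the positive ground state $f_-=1-(1+\kappa^2-\Delta)\mathrm{sn}^2$ and compute $\mu_0=1-\sqrt{3\beta_1^4-6\beta_1^2+4}<0$, whereas the paper simply infers the existence (and negativity) of the first eigenvalue from the fact that zero is the second one; your version is thus slightly more informative but not methodologically distinct.
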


\begin{proof}
First of all notice that from Weyl's essential spectral Theorem together with the compact embedding $H^2(\T_L)\hookrightarrow L^2(\T_L)$ it follows that the essential spectra of $\mathcal{L}_{\mathrm{sn}}$ is empty. Moreover, from compact self-adjoint operator theory it also follows that $\mathcal{L}_{\mathrm{sn}}$ has only point spectra, more specifically, the spectra of $\mathcal{L}_{\mathrm{sn}}$ is given by a countable infinite set of real numbers \[
\lambda_0\leq \lambda_1\leq\lambda_2\leq...
\]
satisfying that $\lambda_n\to+\infty$ as $n\to+\infty$. Therefore, the problem is reduced to study the periodic eigenvalue problem: \begin{align}\label{eig_sn_i}
\begin{cases}
\mathcal{L}_{\mathrm{sn}}f=\lambda f
\\ f(0)=f(L), \ \, f'(0)=f'(L).
\end{cases}
\end{align}
We point out that since the latter problem defines a second-order ODE, it might have (at most) two linearly independent solutions, in which case we have \emph{coexistence}, and hence, we have a double eigenvalue $\lambda_i=\lambda_{i+1}$. Then, by using the transformation $x\mapsto \ell^{-1} x$ and after some trivial re-arrangements, the latter eigenvalue problem is equivalent to solve the following (well-known) periodic problem \begin{align}\label{sn_lame}\begin{cases}
\dfrac{d^2y}{dx^2}+\big(\sigma-6\kappa^2 \mathrm{sn}^2(x)\big)y=0, 
\\ y(0)=y(4K), \ y'(0)=y'(4K),
\end{cases}
\end{align}
where the eigenvalue $\lambda\in\R$ of problem \eqref{eig_sn_i} is related to $\sigma\in\R$ by the equation:\begin{align}
\label{sigma_sn_lambda}\sigma:=\omega^{-1}\ell^{-2}(1+\lambda).
\end{align}
We recall that equation \eqref{sn_lame} is called the \emph{Jacobian form of Lame equation}. On the other hand, the latter problem has the advantage of having some well-known eigenvalues. In fact, the second and third eigenvalue of equation \eqref{sn_lame} are associated to the following values of $\sigma$ (respectively): \[
\sigma_1:=1+\kappa^2 \quad \hbox{and}\quad \sigma_2:=1+4\kappa^2.
\]
Moreover, these two eigenvalues have associated eigenfunctions given by (respectively): \begin{align}\label{eigen_functions}
Y_1:=\mathrm{cn}(x)\mathrm{dn}(x) \quad \hbox{and}\quad Y_2:=\mathrm{sn}(x)\mathrm{dn}(x).
\end{align}
We refer to \cite{An,In,MaWi} for these well-known facts. Notice also that each of these functions has exactly two zeros on the interval $[0,4K)$, and hence, by applying the inverse transformation $x\mapsto \ell x$  we infer that the corresponding transformations of $Y_1$ and $Y_2$ have exactly two zeros in $[0,L)$. Therefore, by Floquet Theory, we deduce that $\sigma_1$ and $\sigma_2$ are associated to the second and third eigenvalues of \eqref{eig_sn_i} by relation \eqref{sigma_sn_lambda}. Moreover, notice that, on the one-hand, by using \eqref{sigma_sn_lambda}  we obtain that $\sigma_1$ corresponds to the case $\lambda=0$. While on the other hand, by using relation \eqref{sigma_sn_lambda} again, we have that $\sigma_2$ is associated to \begin{align}\label{third_eig}
\lambda=\omega\ell^2\sigma_2-1=3\big(1-\tfrac{1}{2}\beta_1^2\big)>0 \quad \hbox{for}\quad \beta_1\in(1,\sqrt{2}).
\end{align}
Thus, zero is the second eigenvalue of \eqref{eig_sn_i}, what concludes the proof.
\end{proof}
As an application of the previous proposition we are able to obtain the main spectral information of $\vec{\mathcal{L}}_{\mathrm{sn}}$ required to apply Grillakis-Shatah-Strauss result.
\begin{cor}
Under the assumptions of Proposition \ref{MT_SN_CURVE} the following holds: The operator $\vec{\mathcal{L}}_{\mathrm{sn}}$ given in \eqref{op_vec_sn} defined in $L^2(\T_L)\times L^2(\T_L)$ with domain $H^2(\T_L)\times H^1(\T_L)$ defines a bounded self-adjoint operator. Moreover, its first three eigenvalues are simple, being the second one equals to zero, with associated eigenfunction given by $\vec{\phi}_{c,x}^{\mathrm{sn}}$. Additionally, the first eigenvalue is the only negative one, and the remaining part of the spectra is positive and bounded away from zero. 
\end{cor}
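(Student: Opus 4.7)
The plan is to transfer the spectral picture from the scalar operator $\mathcal{L}_{\mathrm{sn}}$ just established in Proposition \ref{spectra_sn} to the matrix operator $\vec{\mathcal{L}}_{\mathrm{sn}}$ by exploiting the quadratic-form identity
\[
Q_{\mathrm{sn}}(\phi_1,\phi_2)=\langle\mathcal{L}_{\mathrm{sn}}\phi_1,\phi_1\rangle_{L^2}+\|\phi_2+c\phi_{1,x}\|_{L^2}^2
\]
already derived just above the corollary statement. Self-adjointness of $\vec{\mathcal{L}}_{\mathrm{sn}}$ on $H^2(\T_L)\times H^1(\T_L)$ is routine: the diagonal entries are symmetric, the off-diagonal pair $(-c\partial_x,c\partial_x)$ is symmetric by periodic integration by parts, and the resulting symmetric operator is diagonalisable in the Fourier basis of $\T_L$.

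For the essential spectrum I would invoke Weyl's theorem to replace $\vec{\mathcal{L}}_{\mathrm{sn}}$ by the constant-coefficient operator $\vec{\mathcal{L}}_0$ obtained by setting $\phi_c^{\mathrm{sn}}\equiv 0$; the latter decouples into a $2\times 2$ matrix at each Fourier mode $k\in(2\pi/L)\Z$ with eigenvalues
\[
\lambda_\pm(k)=\tfrac{1}{2}\big(k^2\pm\sqrt{k^4-4(1-c^2)k^2+4}\big).
\]
A short expansion gives $\lambda_-(k)\uparrow 1-c^2$ and $\lambda_+(k)\to+\infty$ as $|k|\to\infty$, so the essential spectrum reduces to the single accumulation point $\{1-c^2\}\subset(0,\infty)$ in the subluminal regime $|c|<1$. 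In particular, everything strictly below $1-c^2$ is an isolated eigenvalue of finite multiplicity and min-max applies there.

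Next I identify the kernel directly: differentiating the profile equation \eqref{subluminal_der} gives $\mathcal{L}_{\mathrm{sn}}(\phi_c^{\mathrm{sn}})'=0$, from which $\vec{\mathcal{L}}_{\mathrm{sn}}\vec{\phi}_{c,x}^{\mathrm{sn}}=0$ follows by a direct check. Conversely, the second row of the kernel equation forces $\phi_2=-c\phi_{1,x}$ and reduces the first row to $\mathcal{L}_{\mathrm{sn}}\phi_1=0$; simplicity of $\ker\mathcal{L}_{\mathrm{sn}}$ from Proposition \ref{spectra_sn} then gives $\ker\vec{\mathcal{L}}_{\mathrm{sn}}=\R\,\vec{\phi}_{c,x}^{\mathrm{sn}}$. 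The counts of negative and non-positive eigenvalues now follow from the form identity together with min-max: if $V$ is a subspace on which $Q_{\mathrm{sn}}<0$ (resp.\ $\leq 0$), the projection onto the first factor is injective because $Q_{\mathrm{sn}}(0,\phi_2)=\|\phi_2\|^2\geq 0$, and on its image the inequality $\langle\mathcal{L}_{\mathrm{sn}}\phi_1,\phi_1\rangle\leq Q_{\mathrm{sn}}(\phi_1,\phi_2)$ forces $\dim V\leq n(\mathcal{L}_{\mathrm{sn}})=1$ (resp.\ $\leq n+z=2$). The reverse inequalities come from embedding the negative (resp.\ kernel) eigenspace of $\mathcal{L}_{\mathrm{sn}}$ via $\phi_1\mapsto(\phi_1,-c\phi_{1,x})$. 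Together with the essential-spectrum bound, this yields exactly one simple negative eigenvalue and a simple zero eigenvalue, with strictly positive spectrum beyond. Simplicity of the third eigenvalue and the existence of a positive spectral gap are then obtained by reducing, for $\lambda\neq 1,\,1-c^2$, the matrix eigenvalue equation to the Hill-type equation
\[
-(1-c^2-\lambda)\partial_x^2\phi_1+(1-\lambda)\big(3(\phi_c^{\mathrm{sn}})^2-1-\lambda\big)\phi_1=0,
\]
and applying a Floquet/Lamé analysis analogous to the one in Proposition \ref{spectra_sn}, using the simplicity of $\mathcal{L}_{\mathrm{sn}}$'s third eigenvalue $3(1-\tfrac{1}{2}\beta_1^2)>0$ recorded in \eqref{third_eig}.

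I expect the essential-spectrum step to be the main technical nuisance, since $\vec{\mathcal{L}}_{\mathrm{sn}}$ fails to have compact resolvent (its $(2,2)$ block is just the identity); nevertheless, the accumulation point $1-c^2$ lies strictly above $0$ in the subluminal regime, which is precisely what legitimises the variational characterisation for the three eigenvalues of interest and allows the algebraic reduction to $\mathcal{L}_{\mathrm{sn}}$ to be pushed all the way through.
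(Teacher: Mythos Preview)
Your argument is correct and rests on the same core idea as the paper: transfer the spectral information from $\mathcal{L}_{\mathrm{sn}}$ to $\vec{\mathcal{L}}_{\mathrm{sn}}$ via the quadratic-form identity $Q_{\mathrm{sn}}(\phi_1,\phi_2)=\langle\mathcal{L}_{\mathrm{sn}}\phi_1,\phi_1\rangle+\|\phi_2+c\phi_{1,x}\|_{L^2}^2$. The paper's execution is terser and slightly different in mechanics: it applies the min-max principle directly, using $(Y_0,0)$ (with $Y_0$ the ground state of $\mathcal{L}_{\mathrm{sn}}$) and $(\phi_{c,x}^{\mathrm{sn}},0)$ as explicit test/constraint vectors to pin down $\lambda_0<0$, $\lambda_1=0$ and $\lambda_2>0$, and it does not discuss the essential spectrum at all. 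Your inertia/dimension-counting via the injective projection onto the first factor is an equivalent repackaging of the same variational content. The extra care you take in locating the accumulation point $1-c^2>0$ (so that min-max is legitimate for the bottom eigenvalues despite the non-compact resolvent), and your Lam\'e-type reduction for the third eigenvalue, actually go beyond what the paper's proof supplies; in particular, the paper establishes $\lambda_2>0$ but does not separately justify the simplicity of $\lambda_2$ claimed in the statement.
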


\begin{proof}
The proof is somehow trivial once Proposition \ref{spectra_sn} has been established, however, for the sake of completeness we show its most important steps. In fact, first of all, for the sake of simplicity, from now on we shall write $X$ to refer to the space $X:=H^1(\T_L)\times L^2(\T_L)$. Now, notice that since we are more interested in the signs of these eigenvalues rather than in their specific values, we can use the min-max principle which is particularly useful for comparing eigenvalues (see for instance \cite{RS}). In fact, let us denote by $\lambda_0,\lambda_1,\lambda_2\in\R$ the first three eigenvalue of $\vec{\mathcal{L}}_{\mathrm{sn}}$ respectively. Additionally, let us denote by $Y_0$ the eigenfunction associated to the first eigenvalue of $\mathcal{L}_{\mathrm{sn}}$ given by Proposition \ref{spectra_sn}. Then, by the min-max principle we have \begin{align}\label{minimax_sn_second}
\lambda_1=\sup_{(\psi_1,\psi_2)\in X}\inf_{\substack{(\phi_1,\phi_2)\in X\setminus\{\vec{0}\},\\ \langle (\phi_1,\phi_2),(\psi_1,\psi_2)\rangle =0}}\dfrac{\langle \vec{\mathcal{L}}_{\mathrm{sn}}(\phi_1,\phi_2),(\phi_1,\phi_2)\rangle}{\Vert (\phi_1,\phi_2)\Vert_{X}}.
\end{align}
Now, we recall that due to the spectral properties of $\mathcal{L}_{\mathrm{sn}}$ given in Proposition \ref{spectra_sn} it immediately follows that for any $\phi\in H^1(\T_L)$ it holds: \[
\langle \phi,Y_0\rangle=0 \ \implies \ \langle \mathcal{L}_{\mathrm{sn}}\phi,\phi\rangle \geq 0. 
\]
Thus, by choosing $(\psi_1,\psi_2)=(Y_0,0)\in X$ in \eqref{minimax_sn_second} we deduce that \[
\lambda_1\geq 0.
\]
Now, on the one-hand, we know that $\vec{\phi}_{c,x}^{\mathrm{sn}}$ satisfies that $\vec{\mathcal{L}}_{\mathrm{sn}}\vec{\phi}_{c,x}^{\mathrm{sn}}\equiv 0$ as well as $\langle \vec{\phi}_{c,x}^{\mathrm{sn}},(Y_0,0)\rangle=0$, while on the other hand,  \[
\langle \vec{\mathcal{L}}_{\mathrm{sn}}(Y_0,0),(Y_0,0)\rangle=\langle \mathcal{L}_{\mathrm{sn}}Y_0,Y_0\rangle<0.
\]
Therefore, gathering all the previous information it follows that $\lambda_1=0$ and that $\lambda_0<0$. Finally, by using again the min-max principle we know that $\lambda_2 $ is given by \[
\lambda_2=\sup_{\substack{(\psi_1,\psi_2)\in X, \\ (\psi_3,\psi_4)\in X}}\inf_{\substack{(\phi_1,\phi_2)\in X\setminus\{\vec0\},\\ \langle (\phi_1,\phi_2),(\psi_1,\psi_2)\rangle =0,\\ \langle (\phi_1,\phi_2),(\psi_3,\psi_4)\rangle =0}}\dfrac{\langle \vec{\mathcal{L}}_{\mathrm{sn}}(\phi_1,\phi_2),(\phi_1,\phi_2)\rangle}{\Vert (\phi_1,\phi_2)\Vert_{X}}.
\]
Thus, in the same fashion as before, by choosing $(\psi_1,\psi_2)=(Y_0,0)$ and $(\psi_3,\psi_4)=(\phi_{c,x}^{\mathrm{sn}},0)$ together with Proposition \ref{spectra_sn} it follows that $\lambda_2>0$, what concludes the proof. 
\end{proof}

\subsection{Orbital Instability}
Finally, we are ready to prove our orbital instability result for snoidal traveling waves solutions. In fact, as we discussed before, in order to show the Instability Theorem, we shall apply the Grillakis-Shatah-Strauss classical result (see \cite{GSS}). In fact, once the existence of the smooth curve of traveling waves solutions and the main spectral information of the linearized Hamiltonian around $\vec{\phi}_c^{\mathrm{sn}}$ are established (see Proposition \ref{MT_SN_CURVE} and \ref{spectra_sn} respectively), the problem is reduced to study the convexity/concavity of the scalar function:
\[
d(c):=\big(\mathcal{E}+c\mathcal{P}\big)(\vec{\phi}_c^{\mathrm{sn}}).
\]
We recall that under our current hypothesis, the snoidal wave $\vec{\phi}_c^{\mathrm{sn}}$ is orbitally stable if and only if $d(c)$ is convex. In other words, if and only if $d''(c)>0$. Moreover, recalling that $\vec{\phi}_{c}^{\mathrm{sn}}$ is a critical point of the \emph{action functional} $\mathcal{E}+c\mathcal{P}$, we deduce that \begin{align}\label{d_prime_sn}
d'(c)=-c\int_0^L (\phi_{c,x}^{\mathrm{sn}})^2dx.
\end{align}
Since we still have to compute the next derivative of $d$, before going further it is convenient to establish a formula for the latter integral in terms of functions with well-known monotonicity properties. In fact, by using \eqref{subluminal_int} it follows that \[
\int (\phi_{c,x}^{\mathrm{sn}})^2=\dfrac{1}{2\omega_{\mathrm{sb}}}\int \big((\phi_c^{\mathrm{sn}})^4-2( \phi_c^{\mathrm{sn}})^2+\beta_1^2(2-\beta_1^2)\big)dx. 
\]
Thus, by using the relation between $\beta_1$ and $\kappa$ as well as identity \eqref{period_sn}, direct computations yield us to 
\begin{align}
\int_0^L \big(\phi_{c,x}^{\mathrm{sn}}\big)^2&=\dfrac{2\sqrt{2}\beta_1}{3\sqrt{\omega_{\mathrm{sb}}}}\Big(%
\beta_1^2(2+\kappa^2)K(\kappa)-2\beta_1^2(1+\kappa^2)E(\kappa)+6E(\kappa)-3\beta_1^2K(\kappa)%
\Big)\nonumber
\\ & = \dfrac{32}{3L}\Big(%
E(\kappa)+(1-\beta_1^2)K(\kappa)%
\Big)K(\kappa), \label{c_derivative_sn}
\end{align}
where we have used the well-known formulas (see for instance identities $(310.02)$ and $(310.04)$ in \cite{BiFr}): \begin{align}
\int_0^K\mathrm{sn}^2(x;\kappa)dx&=\dfrac{1}{\kappa^2}\Big(K(\kappa)-E(\kappa)\Big),\label{sn_l2norm_formula}
\\ \int_0^K \mathrm{sn}^4(x;\kappa)dx&=\dfrac{1}{3\kappa^4}\Big((2+\kappa^2)K(\kappa)-2(1+\kappa^2)E(\kappa)\Big).\nonumber
\end{align}
Finally, in order to compute the derivative of \eqref{c_derivative_sn} with respect to $c$, we shall need an expression for the derivative of $\beta_1(c)$. In fact, differentiating \eqref{sb_period} with respect to $c$, and recalling that $L$ is fixed, we deduce that
\[
\dfrac{d\beta_1}{dc}=\dfrac{4\sqrt{2}}{L}\Big(\sqrt{1-c^2}K'\dfrac{d\kappa}{d\beta_1}\dfrac{d\beta_1}{dc}-\dfrac{c}{\sqrt{1-c^2}}K\Big).
\]
Thus, by re-arranging terms we get \[
\Big(\dfrac{4\sqrt{2\omega_{\mathrm{sb}}}}{L}K'(\kappa)\dfrac{d\kappa}{d\beta_1}-1\Big)\dfrac{d\beta_1}{dc}=\dfrac{4\sqrt{2}c}{L\sqrt{\omega_\mathrm{sb}}}K(\kappa).
\]
On the other hand, we recall that 
\[
\dfrac{dK}{d\kappa}=\dfrac{E(\kappa)-(1-\kappa^2)K}{\kappa(1-\kappa^2)} \quad \hbox{and} \quad \dfrac{d\kappa}{d\beta_1}=-\dfrac{2}{\beta_1^2\sqrt{2-\beta_1^2}}.
\]
Therefore, gathering the previous identities, recalling that $K'>0$, we conclude that $\tfrac{d\beta_1}{dc}<0$. Hence, once the sign of $\beta_1'$ has been found, we are able to infer the monotonicity of \eqref{c_derivative_sn}. In fact, by direct differentiation with respect to $c$, we obtain
\begin{align*}
\dfrac{3L}{32}\dfrac{d}{dc}\int_0^L\big(\phi_{c,x}^{\mathrm{sn}}\big)^2&= \left(E'(\kappa)\dfrac{d\kappa}{d\beta_1}-2\beta_1K(\kappa)+(1-\beta_1^2)K'(\kappa)\dfrac{d\kappa}{d\beta_1}\right)\dfrac{d\beta_1}{dc}K(\kappa)
\\ & \quad +\Big(E(\kappa)+(1-\beta_1^2)K(\kappa)\Big)K'(\kappa)\dfrac{d\kappa}{d\beta_1}\dfrac{d\beta_1}{dc}
\\ &= \bigg[\dfrac{\kappa'}{\kappa(1-\kappa^2)} E^2(\kappa)+\dfrac{2(1-\beta_1^2)\kappa'}{\kappa(1-\kappa^2)}E(\kappa)K(\kappa)
\\ & \quad -\left(\dfrac{\kappa'}{\kappa}+2\beta_1+\dfrac{2(1-\beta_1^2)\kappa'}{\kappa}\right)K^2(\kappa)\bigg]\dfrac{d\beta_1}{dc}
\\ &=: \Big(\mathrm{I}\cdot E^2(\kappa)+\mathrm{II}\cdot E(\kappa)K(\kappa)+\mathrm{III}\cdot K^2(\kappa)\Big)\dfrac{d\beta_1}{dc}.
\end{align*}
Thus, in order to deduce the sign of the previous expression, we split the analysis into two steps. First of all, we rewrite the second term $\mathrm{II}$ as: \[
\mathrm{II}=\dfrac{\kappa'(1-\kappa^2)(1-\beta_1^2)}{\kappa(1-\kappa^2)}-\dfrac{\kappa'(1-\kappa^2)}{\kappa(1-\kappa^2)}=:\mathrm{A}_1+\mathrm{A}_2.
\]
Now, on the one-hand, recalling that $\kappa'=\tfrac{d\kappa}{d\beta_1}<0$ for all $\beta_1\in(1,\sqrt{2})$, together with the fact that $K'(\kappa)>0$ for all $\beta_1\in(1,\sqrt{2})$, we deduce that \[
\mathrm{I}\cdot E^2(\kappa)+\mathrm{A}_2E(\kappa)K(\kappa)=\kappa'E(\kappa)\left(\dfrac{E(\kappa)-(1-\kappa^2)K(\kappa)}{\kappa(1-\kappa^2)}\right)<0.
\]
On the other hand, noticing that $\mathrm{A}_1>0$ and recalling that $K(\kappa)>E(\kappa)$ we get \[
\mathrm{A}_1E(\kappa)K(\kappa)+\mathrm{III}\cdot K^2(\kappa)\leq \left(\mathrm{A}_1+\mathrm{III}\right)K^2(\kappa)=\dfrac{2(1-\beta_1^2)}{\beta_1} K^2(\kappa) < 0.
\]
Therefore, we conclude that for all $c\in(0,1)$ it holds: \[
\dfrac{3L}{32}\dfrac{d}{dc}\int_0^L\big(\phi_{c,x}^{\mathrm{sn}}\big)^2=\Big(\mathrm{I}\cdot E^2(\kappa)+\mathrm{II}\cdot E(\kappa)K(\kappa)+\mathrm{III}\cdot K^2(\kappa)\Big)\dfrac{d\beta_1}{dc}>0.
\]
Finally, noticing that \[
d''(c)\leq-c\dfrac{d}{dc}\int_0^L\big(\phi_{c,x}^{\mathrm{sn}}\big)^2dx,
\]
we obtain that $d''(c)<0$ for all $c\in(0,1)$, what concludes the proof by applying the instability result in \cite{GSS}. Specifically, we obtain the following result. 

\begin{thm}
Under the assumptions of Proposition \ref{MT_SN_CURVE}, the snoidal wave solution given by \eqref{sn_sol_def} is orbitally unstable under the periodic flow of the $\phi^4$-equation \eqref{phif}.
\end{thm}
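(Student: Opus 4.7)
The plan is to invoke the Grillakis--Shatah--Strauss instability criterion, which is applicable since all of its structural hypotheses have already been verified in the paper: Proposition \ref{MT_SN_CURVE} yields a smooth curve $c\mapsto \vec\phi_c^{\mathrm{sn}}$ of critical points of the action $\mathcal{E}+c\mathcal{P}$, and the corollary to Proposition \ref{spectra_sn} establishes that $\vec{\mathcal{L}}_{\mathrm{sn}}$ has exactly one negative simple eigenvalue, a simple zero eigenvalue with eigenfunction $\vec\phi_{c,x}^{\mathrm{sn}}$, and the rest of its spectrum positive and bounded away from zero. Under these conditions, GSS reduces orbital (in)stability to the sign of $d''(c)$, where $d(c):=(\mathcal{E}+c\mathcal{P})(\vec\phi_c^{\mathrm{sn}})$; concavity $d''(c)<0$ will give the desired instability.

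The first substantive step is to compute $d'(c)$. Since $\vec\phi_c^{\mathrm{sn}}$ is a critical point of $\mathcal{E}+c\mathcal{P}$, differentiating in $c$ kills the variational part and leaves only the explicit $c$-dependence of $c\mathcal{P}$, giving
\[
d'(c)=\mathcal{P}(\vec\phi_c^{\mathrm{sn}})=-c\int_0^L (\phi_{c,x}^{\mathrm{sn}})^2\,dx,
\]
using the form of the vector solution. Hence
\[
d''(c)=-\int_0^L(\phi_{c,x}^{\mathrm{sn}})^2dx-c\,\frac{d}{dc}\int_0^L(\phi_{c,x}^{\mathrm{sn}})^2dx,
\]
so it suffices to show that $\frac{d}{dc}\int_0^L(\phi_{c,x}^{\mathrm{sn}})^2\,dx\ge 0$ for $c\in(0,1)$, which immediately yields $d''(c)<0$.

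To analyze this derivative I would first convert the $L^2$-norm of $\phi_{c,x}^{\mathrm{sn}}$ into an explicit function of the modulus $\kappa$ and of $\beta_1$ via the quadrature identity \eqref{subluminal_int} and the standard integral formulas for $\mathrm{sn}^2$ and $\mathrm{sn}^4$ (such as \eqref{sn_l2norm_formula}), together with the relation $L=T_{\mathrm{sb}}$ to eliminate the prefactor. The result is an expression of the form $c_1(E(\kappa)+(1-\beta_1^2)K(\kappa))K(\kappa)$ with all $c$-dependence encoded through $\beta_1(c)$ (and $\kappa(\beta_1)$). Next, implicitly differentiate the period relation $T_{\mathrm{sb}}(\beta_1,c)=L$ using the derivative formulas \eqref{ek_derivative} and $d\kappa/d\beta_1=-2/(\beta_1^2\sqrt{2-\beta_1^2})$ to extract the sign of $d\beta_1/dc$, which turns out to be negative.

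Finally I would differentiate $\int_0^L(\phi_{c,x}^{\mathrm{sn}})^2 dx$ by the chain rule through $\beta_1$, expand the result as a quadratic combination $\mathrm{I}\cdot E^2+\mathrm{II}\cdot EK+\mathrm{III}\cdot K^2$ in $E(\kappa),K(\kappa)$, and then establish positivity of this expression after multiplication by $d\beta_1/dc<0$ by grouping terms: splitting the mixed coefficient $\mathrm{II}$ as $\mathrm{A}_1+\mathrm{A}_2$, one piece combines with $\mathrm{I}\cdot E^2$ to give a manifestly negative factor times $\kappa'<0$, and the other piece combines with $\mathrm{III}\cdot K^2$ using the bound $E(\kappa)\le K(\kappa)$ to give another negative factor. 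The main obstacle is precisely this last sign-tracking: elliptic derivative identities produce several competing terms whose individual signs are not obvious, and some care must be taken to regroup them so that $E\le K$ can be applied without losing the sign. Once positivity of $(d/dc)\int(\phi_{c,x}^{\mathrm{sn}})^2$ is established, $d''(c)<0$ follows for all $c\in(0,1)$, and GSS delivers the instability.
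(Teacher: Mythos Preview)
Your proposal is correct and follows essentially the same route as the paper: invoke GSS after the spectral corollary, reduce to the sign of $d''(c)$, compute $d'(c)=-c\int_0^L(\phi_{c,x}^{\mathrm{sn}})^2$, rewrite the integral via \eqref{subluminal_int} and the $\mathrm{sn}^2,\mathrm{sn}^4$ formulas as $\tfrac{32}{3L}\big(E+(1-\beta_1^2)K\big)K$, extract $d\beta_1/dc<0$ from the period constraint, then differentiate through $\beta_1$ and regroup the resulting combination $\mathrm{I}\,E^2+\mathrm{II}\,EK+\mathrm{III}\,K^2$ exactly by splitting $\mathrm{II}=\mathrm{A}_1+\mathrm{A}_2$ and using $E\le K$. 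The paper does precisely this, with the same grouping and the same concluding inequality $d''(c)\le -c\,\tfrac{d}{dc}\int(\phi_{c,x}^{\mathrm{sn}})^2<0$.
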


\medskip

\section{Orbital Stability:  Real-valued stationary case}\label{mt_r_s}

Within this section we shall assume that $L>2\pi$. Hence, by Proposition \ref{MT_SN_CURVE} we have the explicit real-valued time-independent periodic solution to equation \eqref{phif}, which is given by: \[
S(t,x)\equiv S(x):=\beta_2\mathrm{sn}\big(\ell x\big) \quad \hbox{where} \quad \ell:=\dfrac{\beta_1}{\sqrt{2}},\quad \kappa:=\dfrac{\beta_2}{\beta_1} \quad \hbox{and}\quad \beta_2^2:=2-\beta_1^2,
\]
where in this case $\beta_1\in(1,\sqrt{2})$ is uniquely defined once $L>2\pi$ is fixed. Recall that from the analysis made in the previous section it follows that these solution are orbitally unstable. However, in this section we shall prove that under some additional hypothesis it is still possible to obtain an orbital stability result. For this purposes we shall follow the strategy in \cite{HPW,KMM}. We point out that these additional hypothesis are not directly transferable to the non-zero speed case.

\medskip

One important advantage in this case is given by the preservation of the spatial-oddness by the periodic flow of the $\phi^4$-equation. That is, if the initial data is $(\mathrm{odd},\mathrm{odd})$, then so is the solution for all times. Then, recalling that $\mathrm{sn}(x)$ is odd, we obtain that if the initial perturbation $\vec\varepsilon_0=(\varepsilon_{0,1},\varepsilon_{0,2})=(\mathrm{odd},\mathrm{odd})$, then so is the solution associated to \[
(\phi_{0,1},\phi_{0,2})=(S,0)+(\varepsilon_{0,1},\varepsilon_{0,2}).
\]
Thus, it is natural to study the time evolution of an initial odd perturbation of $(S,0)$ in terms of the evolution of its perturbation $\vec{\varepsilon}(t)$. In other words, for all times we shall write the solution as $\vec{\phi}(t,x)=(S(x),0)+\vec{\varepsilon}(t,x)$.  Moreover, by using equation \eqref{phif_2} we deduce that $\vec{\varepsilon}(t,x)$ satisfy the first-order system
\begin{align}\label{eps_system}
\begin{cases}
\partial_t\varepsilon_1=\varepsilon_2,
\\ \partial_t\varepsilon_2=-\mathcal{L}_{\mathrm{s}}\varepsilon_1-3S\varepsilon_1^2-\varepsilon_1^3,
\end{cases}
\end{align}
where $\mathcal{L}_s$ is the linearize operator around $S$, which is given by: \[
\mathcal{L}_s=-\partial_x^2-1+3S^2.
\]
From the energy conservation of \eqref{phif} it follows that system \eqref{eps_system} has the following conservation law: \begin{align}\label{eps_energy}
\widetilde{\mathcal{E}}(\vec{\varepsilon}(t)):=\langle \mathcal{L}_s\varepsilon_1,\varepsilon_1\rangle+\int \varepsilon_2^2+2\int S\varepsilon_1^3+\dfrac{1}{2}\int \varepsilon_1^4=\widetilde{\mathcal{E}}(\vec{\varepsilon}_0).
\end{align}
Now, on the one-hand, from the spectral analysis developed in the latter section, we know that there is only one negative eigenvalue associated with the operator $\mathcal{L}_s$. Even more, due to the sign property satisfied by $Y_0$ (the eigenfunction associated to this negative direction), by standard Floquet Theory (see for instance \cite{MaWi}) we know that $Y_0$ is an even function. Furthermore, $S'(x)$, which is associated to the second eigenvalue $\lambda=0$, is also even. On the other hand, notice that since $Y_0$ and $S'$ are even regarded as functions defined on the whole line $\R$, and since they are also $L$-periodic at the same time, it follows that they are even with respect to $x=\tfrac{1}{2}L$. Of course, the same remark also holds for odd functions. Therefore, since $[0,L]$ is symmetric with respect to $x=\tfrac{1}{2}L$, it follows that odd and even functions (with respect to the whole line) belonging to $H^1(\T_L)$ are orthogonal in the associated $H^1(\T_L)$-inner product.

\medskip

Gathering all the previous analysis we are in position to establish the following lemma.

\begin{lem}\label{coercivity}
Under the assumptions and notations of Proposition \ref{MT_SN_CURVE}, for any odd function $\upsilon\in H^1(\T_L)$ the following holds: \[
\langle \mathcal{L}_{s}\upsilon,\upsilon\rangle \geq \lambda^2\Vert \upsilon\Vert_{H^1_{\mathrm{per}}}^2 \quad \hbox{where} \quad \lambda^2=\tfrac{3\beta_2^2}{4+3\beta_2^2}.
\]
\end{lem}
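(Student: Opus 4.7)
\medskip

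\textbf{Proof strategy.} The plan is to exploit the spectral decomposition of $\mathcal{L}_s$ already developed in Section \ref{mt_r_t} together with the parity of its low-lying eigenfunctions, and then bootstrap from $L^2$-coercivity to $H^1$-coercivity by absorbing the gradient term into the quadratic form.

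\medskip

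\textbf{Step 1: parity of the first three eigenfunctions.} From the proof of Proposition \ref{spectra_sn}, setting $c=0$ so that $\omega_{\mathrm{sb}}=1$ and $\ell^2=\beta_1^2/2$, the eigenvalue problem for $\mathcal{L}_s$ is conjugate (via $x\mapsto \ell x$) to the Jacobian Lam\'e problem \eqref{sn_lame}, whose first five eigenvalues and eigenfunctions are classical. The first eigenfunction $Y_0$ is of the form $1+\alpha\,\mathrm{sn}^2(\ell x)$ (even); the second is $Y_1\propto \mathrm{cn}(\ell x)\mathrm{dn}(\ell x)\propto S'(x)$ (even, since $S$ is odd); and the third is $Y_2\propto\mathrm{sn}(\ell x)\mathrm{dn}(\ell x)$, which is \emph{odd}. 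By \eqref{third_eig} the corresponding third eigenvalue of $\mathcal{L}_s$ equals $\lambda_2=\tfrac{3}{2}\beta_2^2$ (using $\beta_1^2+\beta_2^2=2$ to simplify). Since $\mathrm{sn}$ and $\mathrm{cn}\cdot\mathrm{dn}$ are $L$-periodic and also even/odd with respect to the origin viewed on all of $\mathbb{R}$, they are automatically even/odd with respect to $x=L/2$ as well, and therefore odd and even functions in $H^1(\T_L)$ are $L^2$-orthogonal on $[0,L]$.

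\medskip

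\textbf{Step 2: $L^2$-coercivity under oddness.} Let $\upsilon\in H^1(\T_L)$ be odd. Then $\langle \upsilon, Y_0\rangle=\langle\upsilon,Y_1\rangle=0$ by the parity argument above, so expanding $\upsilon$ in the orthonormal spectral basis $\{Y_n\}_{n\ge 0}$ of $\mathcal{L}_s$ we have $\upsilon=\sum_{n\ge 2}a_n Y_n$. Consequently,
\begin{equation}\label{eq:L2coerc_plan}
\langle \mathcal{L}_s\upsilon,\upsilon\rangle=\sum_{n\ge 2}\lambda_n a_n^2\ \ge\ \lambda_2\sum_{n\ge 2}a_n^2\ =\ \tfrac{3\beta_2^2}{2}\,\Vert\upsilon\Vert_{L^2}^2.
\end{equation}

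\medskip

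\textbf{Step 3: upgrade to the $H^1$-coercivity.} Writing out the quadratic form explicitly,
\begin{equation*}
\langle\mathcal{L}_s\upsilon,\upsilon\rangle=\Vert\upsilon_x\Vert_{L^2}^2-\Vert\upsilon\Vert_{L^2}^2+3\int_0^L S^2\upsilon^2\,dx\ \ge\ \Vert\upsilon_x\Vert_{L^2}^2-\Vert\upsilon\Vert_{L^2}^2,
\end{equation*}
since $S^2\ge 0$. Hence $\Vert\upsilon_x\Vert_{L^2}^2\le \langle\mathcal{L}_s\upsilon,\upsilon\rangle+\Vert\upsilon\Vert_{L^2}^2$, and so
\begin{equation*}
\Vert\upsilon\Vert_{H^1}^2=\Vert\upsilon_x\Vert_{L^2}^2+\Vert\upsilon\Vert_{L^2}^2\ \le\ \langle\mathcal{L}_s\upsilon,\upsilon\rangle+2\Vert\upsilon\Vert_{L^2}^2.
\end{equation*}
Substituting the $L^2$-bound from \eqref{eq:L2coerc_plan} yields
\begin{equation*}
\Vert\upsilon\Vert_{H^1}^2\ \le\ \Bigl(1+\tfrac{4}{3\beta_2^2}\Bigr)\langle\mathcal{L}_s\upsilon,\upsilon\rangle\ =\ \tfrac{4+3\beta_2^2}{3\beta_2^2}\,\langle\mathcal{L}_s\upsilon,\upsilon\rangle,
\end{equation*}
which is precisely $\langle\mathcal{L}_s\upsilon,\upsilon\rangle\ge \lambda^2\Vert\upsilon\Vert_{H^1}^2$ with $\lambda^2=3\beta_2^2/(4+3\beta_2^2)$.

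\medskip

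\textbf{Main obstacle.} The analytical content is rather soft: the spectrum and eigenfunctions of $\mathcal{L}_s$ are already determined in Section \ref{mt_r_t}, and the $L^2\to H^1$ bootstrap is a standard trick exploiting the pointwise sign of the potential $3S^2$. The one point requiring a little care is the identification of the parities of $Y_0,Y_1,Y_2$ directly from the Lam\'e normal form, together with the observation that an $L$-periodic function that is odd on $\mathbb{R}$ is also odd about $x=L/2$, which ensures $L^2$-orthogonality with the two even low-lying modes on $[0,L]$.
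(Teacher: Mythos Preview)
Your argument is correct and reaches the same sharp constant $\lambda^2=\tfrac{3\beta_2^2}{4+3\beta_2^2}$, but the route in Step~3 is genuinely cleaner than the paper's. Both proofs share the same starting point, namely the $L^2$-coercivity $\langle\mathcal{L}_s\upsilon,\upsilon\rangle\ge\tfrac{3}{2}\beta_2^2\Vert\upsilon\Vert_{L^2}^2$ for odd $\upsilon$, obtained from the parity of the first two Lam\'e eigenfunctions. From there the paper rewrites the potential as $3S^2=(3\beta_2^2-1)-3\beta_2^2(1-\mathrm{sn}^2)$, introduces two interpolation parameters $\alpha=\tfrac{4}{4+3\beta_2^2}$ and $\eta=\tfrac{1-\alpha}{3\beta_2^2-1}$, splits the quadratic form into a piece $\alpha\langle\mathcal{L}_s\upsilon,\upsilon\rangle$ on which the $L^2$-bound is applied and a remainder that is arranged to equal $(1-\alpha)\Vert\upsilon\Vert_{H^1}^2$ plus a manifestly nonnegative term. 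Your approach bypasses all of this by the single observation that $3S^2\ge 0$ gives $\Vert\upsilon_x\Vert_{L^2}^2\le\langle\mathcal{L}_s\upsilon,\upsilon\rangle+\Vert\upsilon\Vert_{L^2}^2$, after which the $L^2$-bound immediately yields the $H^1$-estimate. The payoff is a shorter, more transparent proof with no auxiliary parameters; the paper's decomposition, while correct, obscures the fact that the optimal constant is already determined by the third eigenvalue and the crude pointwise bound $S^2\ge 0$.
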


\begin{proof}
In fact, by using Proposition \ref{spectra_sn}, noticing that $3(1-\tfrac{1}{2}\beta_1^2)=\tfrac{3}{2}\beta_2^2$ (see \eqref{third_eig}), and the eveness of the first two eigenfunctions, by the Spectral Theorem we deduce that, for any odd function $\upsilon\in H^1(\T_L)$, we have \begin{align}\label{coer_st}
\langle\mathcal{L}_s\upsilon,\upsilon\rangle \geq \tfrac{3}{2}\beta_2^2\Vert \upsilon\Vert_{L^2}^2.
\end{align}
Now, we shall prove that by lowering the constant $\tfrac{3}{2}\beta_2^2$ we can \emph{improve} the latter inequality in the sense that we can change the $L^2$ by the $H^1$ norm. In fact, let us start by rewriting the quadratic form in a more convenient way:
\begin{align}\label{stationary_quadratic}
\langle\mathcal{L}_s\upsilon,\upsilon\rangle &=\int \upsilon_x^2+3\int S^2\upsilon^2-\int \upsilon^2=\int \upsilon_x^2+(3\beta_2^2-1)\int \upsilon^2-3\beta_2^2\int \upsilon^2(1-\mathrm{sn}^2(x)) .
\end{align}
Then, consider $\alpha,\eta\in\R$ given by: \[
\alpha:=\dfrac{4}{4+3\beta_2^2}, \quad \hbox{and}\quad \eta:=\dfrac{1-\alpha}{3\beta_2^2-1}.
\]
Notice that $1-\alpha>0$. We point out that we have chosen $\alpha$ and $\eta$ so that we have (in particular) the following relations (which can be verify by direct evaluation): \[
\big(1-\alpha+\eta\big)(3\beta_2^2-1)=3\beta_2^2(1-\alpha)=\dfrac{9\beta_2^4}{4+3\beta_2^2}>0.
\]
Thus, we can rewrite identity \eqref{stationary_quadratic} again as: \begin{align}
\langle\mathcal{L}_s\upsilon,\upsilon\rangle&=(1-\alpha)\int\upsilon_x^2-\eta(3\beta_2^2-1)\int \upsilon^2+\dfrac{9\beta_1^4}{4+3\beta_1^2}\int \mathrm{sn}^2(x)\upsilon^2+\alpha\langle\mathcal{L}_s\upsilon,\upsilon\rangle\nonumber
\\ & \geq(1-\alpha)\int\upsilon_x^2+\left(\dfrac{3\alpha\beta_2^2}{2}-\eta(3\beta_2^2-1)\right)\int \upsilon^2,\label{ineq_proof_coer}
\end{align}
where in the last line we have used \eqref{coer_st}. Finally, by straightforward computations we see that \[
\dfrac{3}{2}\alpha\beta_2^2-\eta(3\beta_2^2-1)=\dfrac{3\beta_2^2}{4+3\beta_2^2}=1-\alpha>0.
\]
Therefore, by plugging this latter identity into inequality \eqref{ineq_proof_coer} we obtain: \[
\langle \mathcal{L}_s\upsilon,\upsilon\rangle\geq (1-\alpha)\int \big(\upsilon_x^2+\upsilon^2\big),
\]
what finish the proof of the lemma.
\end{proof}

With the above information we are in position to establish our orbital stability result.

\begin{thm}
Under the assumptions of Proposition \ref{MT_SN_CURVE}, assuming additionally $L>2\pi$, there exists $\delta>0$ small enough such that for any initial data \[
\vec{\varepsilon}_0=(\varepsilon_{0,1},\varepsilon_{0,2})\in H^1(\T_L)\times L^2(\T_L) \quad \,\hbox{with} \,\quad (\varepsilon_{0,1},\varepsilon_{0,2}) =(\mathrm{odd},\mathrm{odd}),
\]
satisfying $\Vert (\varepsilon_{0,1},\varepsilon_{0,2})\Vert_{H^1_{\mathrm{per}}\times L^2_{\mathrm{per}}}\leq\delta$, then the following holds: There exists a constant $C>0$ such that the solution to equation \eqref{eps_system} associated to $\vec{\varepsilon}_0$ satisfies \[
\hbox{for all }\,t\in\R,\quad \Vert (\varepsilon_1,\varepsilon_2)(t)\Vert_{H^1_{\mathrm{per}}\times L^2_{\mathrm{per}}}\leq C\delta.
\]
\end{thm}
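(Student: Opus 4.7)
The plan is to run a standard energy-coercivity plus continuity argument in the odd subspace. First, I note that by the remark on preservation of parity, if $\vec{\varepsilon}_0$ is $(\mathrm{odd},\mathrm{odd})$ then the corresponding solution $\vec{\varepsilon}(t)$ of \eqref{eps_system} remains $(\mathrm{odd},\mathrm{odd})$ for all times where it exists. Hence $\varepsilon_1(t,\cdot)$ is an odd $H^1_{\mathrm{per}}$ function at every time, and Lemma \ref{coercivity} applies directly to it.

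Next, combining Lemma \ref{coercivity} with the conserved functional $\widetilde{\mathcal{E}}$ in \eqref{eps_energy}, I would get the lower bound
\begin{align*}
\widetilde{\mathcal{E}}(\vec{\varepsilon}(t)) \;\geq\; \lambda^2 \Vert \varepsilon_1(t)\Vert_{H^1_{\mathrm{per}}}^2 + \Vert \varepsilon_2(t)\Vert_{L^2_{\mathrm{per}}}^2 - 2\Vert S\Vert_{L^\infty}\int \vert \varepsilon_1\vert^3\,dx - \tfrac{1}{2}\int \varepsilon_1^4\,dx.
\end{align*}
By the one-dimensional Sobolev embedding $H^1(\T_L)\hookrightarrow L^\infty(\T_L)$, the cubic and quartic remainders are bounded by $C\Vert \varepsilon_1\Vert_{H^1_{\mathrm{per}}}^3 + C\Vert \varepsilon_1\Vert_{H^1_{\mathrm{per}}}^4$. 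Thus, setting $N(t):=\Vert \vec{\varepsilon}(t)\Vert_{H^1_{\mathrm{per}}\times L^2_{\mathrm{per}}}^2$, there is a universal constant $C_0>0$ with
\begin{align*}
\min(\lambda^2,1)\, N(t) \;\leq\; \widetilde{\mathcal{E}}(\vec{\varepsilon}(t)) + C_0 N(t)^{3/2} + C_0 N(t)^2.
\end{align*}
In the other direction, expanding \eqref{eps_energy} at $t=0$ and using the same Sobolev control, one obtains $\widetilde{\mathcal{E}}(\vec{\varepsilon}_0)\leq C_1 \delta^2$ for some $C_1>0$ depending only on $S$.

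Conservation of $\widetilde{\mathcal{E}}$ then gives, for every time of existence,
\begin{align*}
\min(\lambda^2,1)\, N(t) \;\leq\; C_1\delta^2 + C_0 N(t)^{3/2} + C_0 N(t)^2.
\end{align*}
A standard continuity/bootstrap argument closes the estimate: since $N(0)\leq \delta^2$ and $N$ is continuous in $t$, if we take $\delta$ small enough that the nonlinear terms $C_0 N^{3/2}+C_0 N^2$ are absorbed by $\tfrac{1}{2}\min(\lambda^2,1)\,N$ whenever $N(t)\leq (C\delta)^2$, then the bound $N(t)\leq 2C_1\min(\lambda^2,1)^{-1}\delta^2$ cannot be saturated, and therefore persists on the whole maximal interval. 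Local well-posedness of \eqref{phif} in $H^1_{\mathrm{per}}\times L^2_{\mathrm{per}}$ (Kato theory) then allows to extend the solution globally in time with this uniform bound, yielding the claimed estimate with $C:=\sqrt{2C_1/\min(\lambda^2,1)}$.

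The main obstacle is really the coercivity on odd functions, which has already been established in Lemma \ref{coercivity}; without it, the single negative direction of $\mathcal{L}_s$ given by Proposition \ref{spectra_sn} would prevent the energy from controlling $\Vert\varepsilon_1\Vert_{H^1_{\mathrm{per}}}$ from below, and indeed this is precisely what makes the stationary wave unstable in the full space. Once coercivity is granted, the remaining ingredients (Sobolev control of the cubic/quartic remainders, conservation, and bootstrap) are routine.
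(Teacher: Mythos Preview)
Your proposal is correct and follows essentially the same approach as the paper: preservation of oddness, the coercivity of Lemma~\ref{coercivity} to bound $\widetilde{\mathcal{E}}$ from below by $\lambda^2\Vert\varepsilon_1\Vert_{H^1}^2+\Vert\varepsilon_2\Vert_{L^2}^2$ minus higher-order remainders, an upper bound on $\widetilde{\mathcal{E}}(\vec{\varepsilon}_0)$, and conservation plus a smallness/continuity argument to close. The paper's proof is more terse (it simply writes the lower and upper bounds and invokes smallness of $\delta$), while you spell out the Sobolev control and the bootstrap explicitly, but the substance is the same.
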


\begin{proof}
In fact, noticing that the periodic flow of the system \eqref{eps_system} (as well as the periodic $\phi^4$ flow) preserves the oddness of the initial data, we have that for all $t\in\R$ the solution satisfies \[
\vec{\varepsilon}(t)=(\varepsilon_1,\varepsilon_2)(t)=(\mathrm{odd},\mathrm{odd})(t).
\]
Hence, by plugging the result given by Lemma \ref{coercivity} into the explicit form of $\widetilde{\mathcal{E}}$ (see \eqref{eps_energy}) it immediately follows that: \[
\widetilde{\mathcal{E}}(\vec{\varepsilon}(t))\geq \lambda^2\big(\Vert \varepsilon_2(t)\Vert_{L^2_{\mathrm{per}}}^2+\Vert \varepsilon_1(t)\Vert_{H^1_{\mathrm{per}}}^2\big)-O\big(\Vert \varepsilon_1(t)\Vert_{H^1_{\mathrm{per}}}^3\big).
\]
Recalling that $\widetilde{\mathcal{E}}$ is conserved along the trajectory, in order to conclude it is enough to notice that \[
\widetilde{\mathcal{E}}(\vec{\varepsilon}_0(t))\leq \Vert \varepsilon_{0,2}\Vert_{L^2_{\mathrm{per}}}^2+\Vert \varepsilon_{0,1}\Vert_{H^1_{\mathrm{per}}}^2+O\big(\Vert \varepsilon_{0,1}\Vert_{H^1_{\mathrm{per}}}^3\big).
\]
Thus, by making $\delta>0$ small enough, the proof follows by gathering both inequalities.
\end{proof}

\medskip

\section{Orbital Stability: Complex-valued case}\label{os_c}

\subsection{Spectral analysis}
From now on and for the rest of this section, in addition to the hypothesis given in Proposition \ref{MT_SN_COMPLEX}, we shall assume (without loss of generality) that $c>0$. Now, let us start by recalling that from Proposition \ref{MT_SN_COMPLEX} we have the existence of a smooth curve of complex-valued explicit solutions to equation \eqref{phif} given by: \begin{align}\label{recall_complex_snoidal}
u(t,x)=\beta_2e^{ict}\mathrm{sn}\big(\ell x;\kappa\big), \quad \hbox{where}\quad \ell=\dfrac{\beta_1}{\sqrt{2}}, \quad  \kappa=\dfrac{\beta_2}{\beta_1} \quad \hbox{and}\quad \beta_2^2=2\omega_c-\beta_1^2.
\end{align}
We also recall that in this case $\omega_c=1+c^2$ and $\beta_1\in(\sqrt{\omega_c},\sqrt{2\omega_c})$. On the other hand, in order to avoid misunderstandings with the previous case, from now on, for any given speed $c\in(0,+\infty)$, we denote by $\psi_c^{\mathrm{sn}}$ the function given by the relation $u(t,x)=e^{ict}\psi_c^{\mathrm{sn}}(x)$. Additionally, we shall denote by $\vec{\psi}_c^{\mathrm{sn}}$ to refer to the vector\footnote{See notation \eqref{complex_notation} below.} \[
\vec{\psi}_{c}^{\mathrm{sn}}=(\psi_c^{\mathrm{sn}},c\psi_c^{\mathrm{sn}},0,0).
\]
One of the most important differences with respect to the previous case is that, due to the complex character of the solution, in this case we have to write $\vec\phi$ as a $4$ dimensional vector. Specifically, from now on we shall write $\vec\phi$ as \begin{align}\label{complex_notation}
\vec{\phi}=\big(\mathrm{Re}\phi_1,\mathrm{Im}\phi_2,\mathrm{Im}\phi_1,\mathrm{Re}\phi_2\big)
\end{align}
We remark that the coordinates of $\vec{\phi}$ are not in the most intuitive order. On the other hand, it is worth to notice that in this case we can rewrite equation \eqref{phif} as \[
\partial_t\vec{\phi}=\mathbf{J}\mathcal{E}'(\vec{\phi}) \quad \hbox{where}\quad \mathbf{J}:=\left(\begin{matrix}
0 & 0 & 0 & 1
\\ 0 & 0 & -1 & 0
\\ 0 & 1 & 0 & 0
\\ -1 & 0 & 0 & 0
\end{matrix}\right).
\]
The most important advantage of rewriting $\vec{\phi}$ in this strange order is to ease the spectral analysis for the linearize Hamiltonian $\mathcal{E}''-c\mathcal{F}''$. In fact, this is not an arbitrary choice of coordinates and has been used several times before, we refer (for instance) to \cite{AnNa2} for a previous use of these coordinates in a similar context. Of course, by writing $\vec\phi=(\mathrm{Re}\phi_1,\mathrm{Im}\phi_1,\mathrm{Re}\phi_2,\mathrm{Im}\phi_2)$ all the below properties shall also hold. Now, it is worth to notice that the equation solved by the complex-valued snoidal standing wave solution (see \eqref{complex_solit_eq}) can be rewritten in terms of the functional $\mathcal{E}$ and $\mathcal{F}$ as: \[
\mathcal{E}'\big(\vec{\psi}_c^{\mathrm{sn}}\big)- c\mathcal{F}'\big(\vec{\psi}_c^{\mathrm{sn}}\big)=0.
\]
In other words, the snoidal solution is a critical point of the functional $\mathcal{E}-c\mathcal{F}$. Therefore, by following Grillakis-Shatah-Strauss result, the stability/instability property follows from the study of the spectral properties of the linearized Hamiltonian around $\vec{\psi}_c^{\mathrm{sn}}$, that is, \[
\vec{\mathcal{L}}_\mathrm{sn}:=\big(\mathcal{E}''-c\mathcal{F}''\big)(\vec{\psi}_c^{\mathrm{sn}}).
\]
It is important to notice that since we are adopting notation \eqref{complex_eliptic}, the later identity defines a $4\times 4$ matrix operator. On the oher hand, in this case it is convenient to split the analysis into two parts. In fact, we define the operators:  \begin{align}\label{snisnr}
\vec{\mathcal{L}}_{\mathrm{sn},\mathcal{R}}:=\left(\begin{matrix}
-\partial_x^2-1+3 (\psi_c^{\mathrm{sn}})^2 & -c 
\\ -c & 1
\end{matrix}\right), %
\ \ \vec{\mathcal{L}}_{\mathrm{sn},\mathcal{I}}:=\left(\begin{matrix}
-\partial_x^2-1+(\psi_c^{\mathrm{sn}})^2 & c
\\ c & 1
\end{matrix}\right)
\end{align}
where $\vec{\mathcal{L}}_{\mathrm{sn},\mathcal{R}}$ and $\vec{\mathcal{L}}_{\mathrm{sn},\mathcal{I}}$ denote the \emph{real} and \emph{imaginary} parts of the main operator: \[
\vec{\mathcal{L}}_{\mathrm{sn}}=\left(\begin{matrix}
\vec{\mathcal{L}}_{\mathrm{sn},\mathcal{R}} & 0 \\ 0 & \vec{\mathcal{L}}_{\mathrm{sn},\mathcal{I}}
\end{matrix}\right),
\]
where both zeros denote the $2\times 2$ zero matrix. Now, we intend to proceed in a similar fashion as in the latter section. However, as the previous definitions suggest, in this case it is better to split the spectral analysis of $\vec{\mathcal{L}}_{\mathrm{dn}}$ into two different steps. In fact, we start by considering the quadratic form associated to $\vec{\mathcal{L}}_{\mathrm{dn},\mathcal{R}}$, which is given by  \begin{align*}
Q_{{\mathrm{sn},\mathcal{R}}}&:=\langle\vec{\mathcal{L}}_{\mathrm{sn},\mathcal{R}}(\phi_1,\phi_2),(\phi_1,\phi_2)\rangle=\int \big(\phi_{1,x}^2-\phi_1^2+3(\psi_c^{\mathrm{sn}})\phi_1^2-2c\phi_1\phi_2+\phi_2^2\big)dx
\\ & \ = \int\big(\phi_{1,x}^2-(1+c^2)\phi_1^2+3(\psi_c^{\mathrm{sn}})\phi_1^2\big)dx +\int \big(c\phi_1-\phi_2\big)^2dx
\end{align*}
In the same fashion as before, from the latter identity we can recognize the quadratic form associated to the operator \begin{align}\label{op_sn_r}
\mathcal{L}_{\mathrm{sn},\mathcal{R}}:=-\partial_x^2-(1+c^2)+3 (\psi_c^{\mathrm{sn}})^2.
\end{align}
Proceeding similarly with the quadratic form $Q_{\mathrm{sn},\mathcal{I}}$, we find the linear operator \begin{align}\label{op_sn_i}
\mathcal{L}_{\mathrm{sn},\mathcal{I}}:=-\partial_x^2-(1+c^2)+(\psi_c^{\mathrm{sn}})^2.
\end{align}
Thus, we turn our attention to study the spectral properties of $\mathcal{L}_{\mathrm{sn},\mathcal{R}}$ and $\mathcal{L}_{\mathrm{sn},\mathcal{I}}$.
\begin{prop}\label{prop_sni_1}
Under the assumptions of Proposition \ref{MT_SN_COMPLEX}, the operator $\mathcal{L}_{\mathrm{sn},\mathcal{R}}$ given in \eqref{op_sn_r} defined in $L^2(\T_L)$ with domain $H^2(\T_L)$ defines a bounded self-adjoint operator with a unique negative eigenvalue. Moreover, zero is the second eigenvalue, which is simple, and the rest of the spectrum is discrete and bounded away from zero.
\end{prop}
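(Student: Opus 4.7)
The plan is to mirror the proof of Proposition \ref{spectra_sn} verbatim, with only the constants adjusted to reflect that the potential $(\psi_c^{\mathrm{sn}})^2$ now comes from the complex-valued standing wave curve of Proposition \ref{MT_SN_COMPLEX}. First, since $(\psi_c^{\mathrm{sn}})^2$ is a bounded smooth $L$-periodic function, $\mathcal{L}_{\mathrm{sn},\mathcal{R}}$ is a bounded self-adjoint perturbation of $-\partial_x^2-(1+c^2)$ on $L^2(\T_L)$ with domain $H^2(\T_L)$. Weyl's essential spectrum theorem, combined with the compact embedding $H^2(\T_L)\hookrightarrow L^2(\T_L)$, yields that the essential spectrum is empty and that the spectrum is a sequence of real eigenvalues $\lambda_0\leq\lambda_1\leq\lambda_2\leq\cdots\to+\infty$.

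Next, I would perform the rescaling $z=\ell x$ to recast the eigenvalue problem $\mathcal{L}_{\mathrm{sn},\mathcal{R}}y=\lambda y$ in the Jacobian form of the Lam\'e equation
\[
y''+\big(\sigma-6\kappa^2\mathrm{sn}^2(z;\kappa)\big)y=0,\quad y(0)=y(4K),\ y'(0)=y'(4K),
\]
where the spectral parameter correspondence is $\sigma=\ell^{-2}(\omega_c+\lambda)=\tfrac{2}{\beta_1^2}(\omega_c+\lambda)$, and where the factor $6\kappa^2$ comes directly from $3\beta_2^2/\ell^2=6\kappa^2$ via $\kappa^2=\beta_2^2/\beta_1^2$. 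As recalled in the proof of Proposition \ref{spectra_sn}, the second and third periodic eigenvalues of this Lam\'e equation on $[0,4K)$ are explicit: $\sigma_1=1+\kappa^2$ with eigenfunction $Y_1=\mathrm{cn}(z)\mathrm{dn}(z)$, and $\sigma_2=1+4\kappa^2$ with eigenfunction $Y_2=\mathrm{sn}(z)\mathrm{dn}(z)$. Each of these eigenfunctions has exactly two zeros in one fundamental period, so by standard Floquet/Sturm theory they are indeed the second and third eigenvalues; since $\kappa\neq 0$ we have $\sigma_1<\sigma_2$, hence both are simple.

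It remains to translate $\sigma_1,\sigma_2$ back to $\lambda$-values. Using the defining identity $\beta_1^2+\beta_2^2=2\omega_c$ from Proposition \ref{MT_SN_COMPLEX}, one computes that $\sigma_1$ corresponds to $\lambda=0$, which is consistent with the direct observation that differentiating \eqref{complex_solit_eq} yields $\mathcal{L}_{\mathrm{sn},\mathcal{R}}(\psi_c^{\mathrm{sn}})'=0$ (and $(\psi_c^{\mathrm{sn}})'$ is, up to a multiplicative constant, $Y_1$ after the rescaling). Similarly, $\sigma_2$ corresponds to $\lambda=3\bigl(\omega_c-\tfrac{1}{2}\beta_1^2\bigr)$, which is strictly positive because the parameter range $\beta_1\in(\sqrt{\omega_c},\sqrt{2\omega_c})$ forces $\tfrac{1}{2}\beta_1^2<\omega_c$. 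This shows that $\lambda_1=0$ is simple, that $\lambda_2>0$ is bounded away from zero, and hence that $\lambda_0<0$. Simplicity of $\lambda_0$ (so that $\mathcal{L}_{\mathrm{sn},\mathcal{R}}$ has a \emph{unique} negative eigenvalue) follows from the well-known fact that the first periodic eigenvalue of any Hill operator is always simple.

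I do not anticipate any genuine obstacle in this argument: it is essentially a line-by-line transcription of the proof of Proposition \ref{spectra_sn}, the only delicate point being the bookkeeping that identifies the Lam\'e parameter $\sigma$ with $\lambda$ and that $\omega_c-\tfrac{1}{2}\beta_1^2>0$ in the admissible parameter range. Both are straightforward consequences of the defining relations recorded in \eqref{recall_complex_snoidal}.
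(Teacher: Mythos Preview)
Your proposal is correct and follows essentially the same approach as the paper's own proof: reduction via Weyl's theorem and compact embedding to a pure point spectrum, then the rescaling $x\mapsto \ell^{-1}x$ to the Jacobian Lam\'e form with $\sigma=\ell^{-2}(\omega_c+\lambda)$, identification of $\sigma_1=1+\kappa^2$ and $\sigma_2=1+4\kappa^2$ as the second and third periodic eigenvalues via the zero count of $Y_1,Y_2$, and the check that $\sigma_1\leftrightarrow\lambda=0$ while $\sigma_2\leftrightarrow\lambda=3(\omega_c-\tfrac12\beta_1^2)>0$. Your explicit remark on the simplicity of $\lambda_0$ (first periodic eigenvalue of a Hill operator) is a small but welcome addition that the paper leaves implicit.
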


\begin{proof}
The proof follows similar lines as the one of Proposition \ref{spectra_sn} and hence we only sketch its main steps. In fact, from Weyl's essential Theorem, the compact embedding $H^2(\T_L)\hookrightarrow L^2(\T_L)$ and standard theory of compact operators, it follows that $\mathcal{L}_{\mathrm{sn},\mathcal{R}}$ has only point spectra. Moreover, its spectra is given by a countable infinite set of real numbers tending to infinity. On the other hand, by using the transformation $x\mapsto \ell^{-1} x$ and after some direct re-arrangements, the eigenvalue problem for $\mathcal{L}_{\mathrm{sn}}$ is equivalent to the following second order periodic equation: \begin{align}\label{sevensix}
\begin{cases}
\dfrac{d^2y}{dx}+\big(\sigma-6\kappa^2\mathrm{sn}^2(x)\big)y=0
\\ y(0)=y(4K), \ \, y'(0)=y(4K),
\end{cases}
\end{align}
where the eigenvalue $\lambda\in\R$ is related to $\sigma\in\R$ by the equation: \begin{align}\label{eq_eig_sn_complex}
\sigma=\ell^{-2}(1+c^2+\lambda).
\end{align}
Now, we recall that the previous eigenvalue equation is classical. In particular, we have that 
\[
\sigma_1:=1+\kappa^2 \quad \hbox{and}\quad \sigma_2:=1+4\kappa^2,
\]
are the second and third eigenvalues of \eqref{sevensix} respectively. Then, in the same fashion as before, by using the definitions of $\ell$ and $\kappa$ we see that, eigenvalue $\sigma_1$ is associated to the case $\lambda=0$ and $\sigma_2$ with \[
\lambda=\ell^2(1+\kappa^2)-(1+c^2)=\dfrac{\beta_1^2}{2}+2\beta_2^2-(1+c^2)=3(1+c^2)-\dfrac{3}{2}\beta_1^2>0
\]
where inequality holds for all $\beta_1\in(\sqrt{\omega_c},\sqrt{2\omega_c})$. Finally, by using the explicit form of the eigenfunctions $Y_1$ and $Y_2$ defined in \eqref{eigen_functions}, since they have exactly two zeros in $[0,L)$, we deduce that these two values of $\lambda$ correspond to the second and third eigenvalues of our original operator $\mathcal{L}_{\mathrm{sn},\mathcal{R}}$, what concludes the sketch of the proof.
\end{proof}

Regarding the spectral information of $\mathcal{L}_{\mathrm{sn},\mathcal{I}}$, due to the term $(\psi_c^{\mathrm{sn}})^2$, in this case we have two different negative directions.

\begin{prop}\label{op_sn_i_2}
Under the assumptions of Proposition \ref{MT_SN_COMPLEX}, the operator $\mathcal{L}_{\mathrm{sn},\mathcal{I}}$ given in \eqref{op_sn_i} defined in $L^2(\T_L)$ with domain $H^2(\T_L)$ defines a bounded self-adjoint operator with two different negative eigenvalues, which are both simple. Moreover, zero is the third eigenvalue, which is also simple, and the rest of the spectrum is discrete and bounded away from zero.
\end{prop}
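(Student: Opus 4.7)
The plan is to follow the same template used for Propositions \ref{spectra_sn} and \ref{prop_sni_1}. Weyl's essential spectrum theorem, combined with the compact embedding $H^2(\T_L)\hookrightarrow L^2(\T_L)$, yields that $\mathcal{L}_{\mathrm{sn},\mathcal{I}}$ is bounded below with purely discrete spectrum accumulating only at $+\infty$. The associated $L$-periodic Sturm--Liouville problem becomes, after the rescaling $x\mapsto \ell^{-1}x$, the Jacobian Lam\'e equation
\begin{equation*}
\frac{d^2 y}{dx^2}+\big(\sigma-2\kappa^2\mathrm{sn}^2(x;\kappa)\big)y=0,\qquad y(0)=y(4K),\ \ y'(0)=y'(4K),
\end{equation*}
with $\sigma=\ell^{-2}(1+c^2+\lambda)$. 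The only difference with Propositions \ref{spectra_sn} and \ref{prop_sni_1} is that the coefficient in front of $\kappa^2\mathrm{sn}^2$ is now $2$ instead of $6$, so we fall into the $n=1$ Lam\'e setting, and it is precisely this change which produces the second negative direction.

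Second, I would invoke the three classical Lam\'e polynomials of order one, which by direct differentiation using the standard relations among $\mathrm{sn}$, $\mathrm{cn}$, $\mathrm{dn}$ satisfy
\begin{equation*}
\mathrm{dn}(x;\kappa)\longleftrightarrow \sigma=\kappa^2,\qquad \mathrm{cn}(x;\kappa)\longleftrightarrow \sigma=1,\qquad \mathrm{sn}(x;\kappa)\longleftrightarrow \sigma=1+\kappa^2.
\end{equation*}
Counting zeros on $[0,4K)$: $\mathrm{dn}$ has none, while $\mathrm{cn}$ and $\mathrm{sn}$ each have exactly two. By Floquet/oscillation theory for Hill operators (see \cite{An,MaWi}), the no-zero eigenfunction identifies the simple bottom eigenvalue, while the two-zero eigenfunctions identify the second and third; since $\kappa^2<1<1+\kappa^2$ strictly, the three values are simple and exhaust the first three Lam\'e eigenvalues, with a strictly positive spectral gap above $1+\kappa^2$ (the next eigenfunction must carry four zeros in $[0,4K)$).

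Third, translating back via $\lambda=\ell^2\sigma-(1+c^2)$ and using $\ell^2=\beta_1^2/2$ together with $\beta_2^2=2(1+c^2)-\beta_1^2$, the first three eigenvalues of $\mathcal{L}_{\mathrm{sn},\mathcal{I}}$ read
\begin{equation*}
\lambda_0=-\tfrac{\beta_1^2}{2},\qquad \lambda_1=\tfrac{\beta_1^2}{2}-(1+c^2),\qquad \lambda_2=0.
\end{equation*}
Since Proposition \ref{MT_SN_COMPLEX} forces $\beta_1^2\in\big(1+c^2,\,2(1+c^2)\big)$, one immediately obtains $\lambda_0<\lambda_1<0=\lambda_2<\lambda_3$, delivering the two strictly negative simple eigenvalues and the simple null eigenvalue (whose eigenfunction is proportional to $\psi_c^{\mathrm{sn}}$, reflecting gauge invariance), together with the claimed spectral gap.

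The step requiring the most care is the identification of $\mathrm{dn}$, $\mathrm{cn}$, $\mathrm{sn}$ as exhausting the first three eigenfunctions of the rescaled Hill operator, and in particular ruling out any hidden eigenvalue wedged among them or accumulating toward $1+\kappa^2$ from above; this is precisely where the classical Floquet/oscillation structure of the $n=1$ Lam\'e equation is needed. Everything else is a mechanical adaptation of the arguments already displayed in Propositions \ref{spectra_sn} and \ref{prop_sni_1}.
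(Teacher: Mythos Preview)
Your proposal is correct and follows essentially the same approach as the paper: both reduce to the $n=1$ Lam\'e equation via the rescaling $x\mapsto\ell^{-1}x$, identify $\mathrm{dn}$, $\mathrm{cn}$, $\mathrm{sn}$ as the first three eigenfunctions by zero-counting and Floquet theory, and translate the corresponding $\sigma$-values back to obtain $\lambda_0=-\tfrac{\beta_1^2}{2}$, $\lambda_1=-\tfrac{\beta_2^2}{2}$, $\lambda_2=0$. Your presentation is slightly more systematic in listing all three Lam\'e polynomials together and verifying the strict ordering $\lambda_0<\lambda_1<0$ via $\beta_1^2\in(\omega_c,2\omega_c)$, but the argument is the same.
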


\begin{proof}
In fact, once again, by Weyl's essential Theorem, the compact embedding $H^2(\T_L)\hookrightarrow L^2(\T_L)$ and standard theory of compact operators, it follows that $\mathcal{L}_{\mathrm{sn},\mathcal{I}}$ has only point spectra. Moreover, its spectra is given by a countable infite set of real numbers tending to infinity. Additionally, note that from equation \eqref{complex_solit_eq} we have that $\mathcal{L}_{\mathrm{sn},\mathcal{I}}\psi_c^{\mathrm{sn}}\equiv0$. However, $\psi_{c}^{\mathrm{sn}}$ has exactly two zeros in $[0,L)$, and hence, from Floquet Theory we know that it corresponds to either the second or the third eigenvalue. On the other hand, in this case we can easily find the first eigenvalue, as well as the remaining candidate to be the second/third eigenvalue. In fact, in order to rewrite the eigenvalue equation in a more standard form, by using the transformation $x\mapsto \ell^{-1} x$ and by some direct re-arrangements we have that the eigenvalue problem is equivalent to find the values of $\sigma\in\R$ for which the following equation has nontrivial solutions: \begin{align}\label{reduction_complex}
\begin{cases}
\dfrac{d^2y}{dx}+\big(\sigma-2\kappa^2\mathrm{sn}^2(x)\big)y=0
\\ y(0)=y(4K), \ \, y'(0)=y(4K),
\end{cases}
\end{align}
where the eigenvalue $\lambda\in\R$ is related to $\sigma\in\R$ by the equation: \begin{align}\label{sigma_final}
\sigma=\ell^{-2}(1+c^2+\lambda).
\end{align}
As we already said, in this case we shall explicitly find the first and second/third eigenvalue with their associated eigenfunctions. In fact, by explicit computations it is straightforward to check that $\sigma=\kappa^2$ is an (simple, by Floquet Theory) eigenvalue with associated eigenfunction $\mathrm{dn}(x;\kappa)$. Thus, by using relation \eqref{sigma_final} we deduce that $\lambda_1=-\tfrac{1}{2}\beta_1^2$ is the corresponding eigenvalue of our original operator $\mathcal{L}_{\mathrm{sn},\mathcal{I}}$. Moreover, by applying the inverse transformation $x\mapsto \ell x$, we see that $\mathrm{dn}(\ell x;\kappa)$ is the eigenfunction associated to $\lambda_1$. On the other hand, notice that $\mathrm{dn}(\ell x;\kappa)$ has no zeros in $[0,L)$, and hence, by Floquet Theory, it corresponds to the first eigenvalue of $\mathcal{L}_{\mathrm{sn},\mathcal{I}}$. In the same fashion, by explicit computations we see that $\sigma=1$ is an eigenvalue of \eqref{reduction_complex} with associated eigenfunction $\mathrm{cn}(x,\kappa)$. Moreover, by using \eqref{sigma_final} we infer that $\sigma=1$ corresponds to the case $\lambda=-\tfrac{1}{2}\beta_2^2$ and additionally we have that $\mathrm{cn}(\ell x,\kappa)$ has exactly two zeros in $[0,L)$. Therefore, $\psi_c^{\mathrm{sn}}$ correspond to the third eigenvalue of $\mathcal{L}_{\mathrm{sn},\mathcal{R}}$, what concludes the sketch of the proof.
\end{proof}

\begin{rem}
We remark that all the functions associated with negative directions found in the previous two propositions are all \textbf{even}. This fact shall be important for the proof the Theorem \ref{MT_COMPLEX}.
\end{rem}

Gathering the last two propositions we have the following information about the matrix operators 
\begin{cor}\label{final_spectral}
The matrix operator $\vec{\mathcal{L}}_{\mathrm{sn},\mathcal{R}}$ given in \eqref{snisnr} defined in $L^2(\T_L)\times L^2(\T_L)$ with domain $H^2(\T_L)\times H^1(\T_L)$ has exactly one negative eigenvalue. Furthermore, the matrix operator $\vec{\mathcal{L}}_{\mathrm{sn},\mathcal{I}}$ defined in \eqref{snisnr} with domain $H^2(\T_L)\times H^1(\T_L)$ has exactly two negative eigenvalue.
\end{cor}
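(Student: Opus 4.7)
My plan is to exploit the fact that the quadratic forms associated with $\vec{\mathcal{L}}_{\mathrm{sn},\mathcal{R}}$ and $\vec{\mathcal{L}}_{\mathrm{sn},\mathcal{I}}$ already split, as computed in the excerpt, into the quadratic form of the underlying scalar operator plus a manifestly non-negative term:
\begin{align*}
Q_{\mathrm{sn},\mathcal{R}}[(\phi_1,\phi_2)] &= \langle \mathcal{L}_{\mathrm{sn},\mathcal{R}}\phi_1,\phi_1\rangle + \|c\phi_1-\phi_2\|_{L^2}^2,\\
Q_{\mathrm{sn},\mathcal{I}}[(\phi_1,\phi_2)] &= \langle \mathcal{L}_{\mathrm{sn},\mathcal{I}}\phi_1,\phi_1\rangle + \|c\phi_1+\phi_2\|_{L^2}^2.
\end{align*}
The whole point is that in each case $\phi_2$ enters only through a non-negative quadratic form that is minimized (and made to vanish) by a unique choice of $\phi_2$ as a linear function of $\phi_1$. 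Combined with the min-max principle, this should translate directly into an equality between the number of negative eigenvalues of each matrix operator and the number of negative eigenvalues of the corresponding scalar operator.

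For the lower bound, let me explain the argument for $\vec{\mathcal{L}}_{\mathrm{sn},\mathcal{R}}$; the other case is analogous. By Proposition \ref{prop_sni_1}, the scalar operator $\mathcal{L}_{\mathrm{sn},\mathcal{R}}$ admits an $n=1$-dimensional subspace $W \subset H^2(\T_L)$ on which the scalar quadratic form is strictly negative. I would then define
\[
\widetilde W := \{(\phi_1,\, c\phi_1)\in H^2(\T_L)\times H^1(\T_L) : \phi_1\in W\},
\]
which is a $1$-dimensional subspace on which the non-negative term vanishes and thus $Q_{\mathrm{sn},\mathcal{R}}$ is strictly negative. By the min-max principle, $\vec{\mathcal{L}}_{\mathrm{sn},\mathcal{R}}$ therefore has at least one negative eigenvalue. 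The same procedure, choosing $\phi_2 = -c\phi_1$, yields two negative directions for $\vec{\mathcal{L}}_{\mathrm{sn},\mathcal{I}}$ out of the two-dimensional negative subspace of $\mathcal{L}_{\mathrm{sn},\mathcal{I}}$ given by Proposition \ref{op_sn_i_2}.

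For the upper bound, suppose $\vec{\mathcal{L}}_{\mathrm{sn},\mathcal{R}}$ had at least two negative eigenvalues; then there would exist a two-dimensional subspace $V\subset H^2(\T_L)\times H^1(\T_L)$ on which $Q_{\mathrm{sn},\mathcal{R}}<0$. The first observation is that the projection $\pi_1(\phi_1,\phi_2)=\phi_1$ must be injective on $V$, because any $(0,\phi_2)\in V$ would give $Q_{\mathrm{sn},\mathcal{R}}[(0,\phi_2)]=\|\phi_2\|_{L^2}^2 \ge 0$, contradicting strict negativity unless $\phi_2=0$. Hence $\pi_1(V)\subset H^2(\T_L)$ is two-dimensional, and on it the scalar quadratic form satisfies
\[
\langle \mathcal{L}_{\mathrm{sn},\mathcal{R}}\phi_1,\phi_1\rangle \le Q_{\mathrm{sn},\mathcal{R}}[(\phi_1,\phi_2)] < 0
\]
for every nonzero $\phi_1\in\pi_1(V)$, contradicting Proposition \ref{prop_sni_1}. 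The identical argument, now starting from a hypothetical three-dimensional negative subspace for $\vec{\mathcal{L}}_{\mathrm{sn},\mathcal{I}}$, contradicts Proposition \ref{op_sn_i_2}. Combining the two bounds yields the exact counts claimed in the corollary.

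The hard part, to the extent there is one, is purely bookkeeping around the min-max principle for operators whose domain is not the whole Hilbert space; this is routine once the quadratic forms are in the split form above, and everything else reduces to Propositions \ref{prop_sni_1} and \ref{op_sn_i_2}. No new spectral computation is needed.
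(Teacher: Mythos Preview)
Your argument is correct. The quadratic-form splitting plus the min-max principle is exactly enough to transfer the negative-eigenvalue count from the scalar operators $\mathcal{L}_{\mathrm{sn},\mathcal{R}}$, $\mathcal{L}_{\mathrm{sn},\mathcal{I}}$ to the matrix operators, and your injectivity observation for the projection $\pi_1$ on a hypothetical negative subspace is the right mechanism for the upper bound.

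The paper takes a different, more explicit route: rather than comparing quadratic forms, it sets up a direct bijection between negative eigenvalues. Given a negative eigenvalue $-\lambda^2$ of $\vec{\mathcal{L}}_{\mathrm{sn},\mathcal{R}}$ with eigenfunction $(\phi_1,\phi_2)$, the second row of the eigenvalue system forces $(1+\lambda^2)\phi_2=c\phi_1$; substituting into the first row shows $\phi_1$ is an eigenfunction of $\mathcal{L}_{\mathrm{sn},\mathcal{R}}$ with negative eigenvalue $-\mu^2=-\lambda^2\big(1+\tfrac{c^2}{1+\lambda^2}\big)$. Conversely, any negative eigenvalue $-\mu^2$ of the scalar operator can be lifted back by solving this relation for $\lambda^2$ and setting $\phi_2=\tfrac{c}{1+\lambda^2}\phi_1$. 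This yields an explicit one-to-one correspondence, from which the count follows by Propositions~\ref{prop_sni_1} and~\ref{op_sn_i_2}. The paper's approach gives slightly more (the actual relation between the eigenvalues), at the cost of having to check that $\lambda^2\mapsto\mu^2$ is a bijection of $(0,\infty)$ onto itself; your variational argument is cleaner for the bare counting statement and sidesteps that verification entirely.
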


\begin{proof}
In fact, let us start by considering $\vec{\mathcal{L}}_{\mathrm{sn},\mathcal{R}}$. We shall prove that there is an explicit relation between the negative eigenvalues of $\vec{\mathcal{L}}_{\mathrm{sn},\mathcal{R}}$ and the ones of $\mathcal{L}_{\mathrm{sn},\mathcal{R}}$. In fact, first of all notice that, in the same fashion as before, we deduce that $\vec{\mathcal{L}}_{\mathrm{sn},\mathcal{R}}$ has only point spectra. Hence, let us consider any negative eigenvalue $-\lambda^2\in\R$ with associated eigenfunction $(\phi_1,\phi_2)$, that is, $(\phi_1,\phi_2,\lambda)$ satisfies the equation \[
\vec{\mathcal{L}}_{\mathrm{sn},\mathcal{R}}(\phi_1,\phi_2)=-\lambda^2(\phi_1,\phi_2).
\]
Then, notice that the latter equation implies that $(1+\lambda^2)\phi_2=c\phi_1 $. Thus, by plugging this relation into the first component of the eigenvalue equation, we obtain \[
-\phi_{1,xx}-(1+c^2)\phi_{1,x}+3\big(\psi_c^{\mathrm{sn}}\big)^2\phi_1=-\lambda^2\Big(1+\tfrac{c^2}{1+\lambda^2}\Big)\phi_1=:-\mu^2\phi_1.
\]
Therefore, $-\mu^2\in\R$ is a negative eigenvalue of the operator $\mathcal{L}_{\mathrm{sn},\mathcal{R}}$. Now, we conclude the proof by following all the previous steps backwards. In fact, in the same fashion as before, if $-\mu^2\in\R$ is a negative eigenvalue of the operator $\mathcal{L}_{\mathrm{sn},\mathcal{R}}$ with associated eigenfunction $\phi_1$, then, by defining $\lambda^2$ given by the relation \[
\mu^2=\lambda^2\left(1+\tfrac{c^2}{1+\lambda^2}\right) \quad \,\hbox{and} \,\quad \phi_2:=\dfrac{c}{1+\lambda^2}\phi_1,
\]
we obtain that $-\lambda^2$ is a negative eigenvalue of the operator $\vec{\mathcal{L}}_{\mathrm{sn},\mathcal{R}}$ with corresponding eigenfunction $(\phi_1,\phi_2)$. Finally, it is worth to notice that this procedure finds all negative eigenvalues, what finish the proof for $\vec{\mathcal{L}}_{\mathrm{sn},\mathcal{R}}$. The spectral information about $\vec{\mathcal{L}}_{\mathrm{sn},\mathcal{I}}$ can be obtained in the same fashion.
\end{proof}

\subsection{Orbital Stability}

Finally, with all the above analysis we are able to prove the orbital stability result. In fact, we proceed in a similar fashion as in the previous section, however as we shall see, this case shall be slightly different. In fact, due to the presence of more than one unstable direction, we are not  exactly in the setting of \cite{GSS} but in the one of \cite{GSS2}. Hence, with all the above analysis we know that the stability/instability problem reduces to study the convexity/concavity of the scalar function: \[
d(c):=\big(\mathcal{E}-c\mathcal{F}\big)(\vec{\psi}_{c}^{\mathrm{sn}}).
\] 
We recall that in our current setting and by following the notation introduced at the beginning of this section, we have $\vec{\psi}_c^{\mathrm{sn}}=\big(\psi_{c}^{\mathrm{sn}},c\psi_{c}^{\mathrm{sn}},0,0\big)$. On the other hand, recalling that $\vec{\psi}_c^{\mathrm{sn}}$ is a critical point of the action functional, we deduce that \[
d'(c)=-c\int_0^L\big(\psi_c^{\mathrm{sn}}\big)^2dx.
\]
Since we still have to compute $d''(c)$, before going further it is convenient to express $d'(c)$ in terms of well-known functions. In fact, by using formula \eqref{sn_l2norm_formula}, identities in \eqref{recall_complex_snoidal} as well as relation \eqref{period_sn} we have
\[
c\int_0^L\big(\psi_c^{\mathrm{sn}}\big)^2dx=c\beta_2^2\int_0^L\mathrm{sn}^2(\ell x;\kappa)=\dfrac{32c}{L}\Big(K(\kappa)-E(\kappa)\Big)K(\kappa)
\]
Thus, by directly differentiating the latter identity with respect to $c$ we get: \begin{align}\label{derivative_complex_ddc}
\dfrac{L}{32}\dfrac{d}{dc}\left\{c\int_0^L\big(\psi_c^{\mathrm{sn}}\big)^2dx\right\}&=\Big(K(\kappa)-E(\kappa)\Big)K(\kappa)+c\Big(K'(\kappa)-E'(\kappa)\Big)\dfrac{d\kappa}{dc}K(\kappa)\nonumber
\\ & \quad +c\Big(K(\kappa)-E(\kappa)\Big)\dfrac{d\kappa}{dc}K'(\kappa)=:\mathrm{I}+\mathrm{II}+\mathrm{III}.
\end{align}
We shall prove that each of the previous terms $\mathrm{I}$, $\mathrm{II}$ and $\mathrm{III}$ is positive, what shall be enough to conclude the proof. Indeed, first of all notice that since we already know the monotonicity properties of $K$ and $E$, it is enough to study the positivity/negativity of $\tfrac{d\kappa}{dc}$. In fact, on the one-hand, by differentiating equation \eqref{period_sn} with respect to $c$ we obtain
\begin{align}\label{complex_dbdc}
\dfrac{d\beta_1}{dc}=\dfrac{4\sqrt{2}}{L}K'(\kappa)\dfrac{d\kappa}{dc}.
\end{align}
On the other hand, by differentiating the equation defining $\kappa$ in \eqref{recall_complex_snoidal} with respect to $c$ we get
\[
\dfrac{d\kappa}{dc}=-\dfrac{\sqrt{2\omega_c-\beta_1^2}}{\beta_1^2}\cdot \dfrac{d\beta_1}{dc}+\dfrac{2c}{\beta_1\sqrt{2\omega_c-\beta_1^2}}-\dfrac{1}{\sqrt{2\omega_c-\beta_1^2}}\cdot\dfrac{d\beta_1}{dc}.
\]
Gathering both identities and after some direct re-arrangements we obtain \[
\left(\dfrac{L}{4\sqrt{2}}+\dfrac{\sqrt{2\omega_c-\beta_1^2}}{\beta_1^2}K'(\kappa)+\dfrac{1}{\sqrt{2\omega_c-\beta_1^2}}K'(\kappa)\right)\dfrac{d\beta_1}{dc}=\dfrac{2c K'(\kappa)}{\beta_1\sqrt{2\omega_c-\beta_1^2}}.
\]
Therefore, by recalling that $K'>0$ together with the fact that $\beta_1\in(\sqrt{\omega_c},\sqrt{2\omega_c})$ we conclude that $\tfrac{d\beta_1}{dc}>0$, and hence, by using equation \eqref{complex_dbdc} again, we conclude that $\tfrac{d\kappa}{dc}>0$. 

\medskip

Now, we claim that this information is enough to conclude the sign of \eqref{derivative_complex_ddc}. In fact, first of all notice that, since $K(\kappa)>E(\kappa)$ for all $\kappa\in(0,1)$ it immediately follows that \[
\mathrm{I}=\Big(K(\kappa)-E(\kappa)\Big)K(\kappa)>0.
\]
In the same fashion, recalling that $K'(\kappa)>0$ for all $\kappa\in(0,1)$ and by using the fact that $\tfrac{d\kappa}{dc}$ also has a sign, we obtain \[
\mathrm{III}=c\Big(K(\kappa)-E(\kappa)\Big)\dfrac{d\kappa}{dc}K'(\kappa)>0.
\]
Then, recalling that from \eqref{Eprime} we have that $E'(\kappa)<0$, we deduce that \[
\mathrm{II}=c\Big(K'(\kappa)-E'(\kappa)\Big)\dfrac{d\kappa}{dc}K(\kappa)>0.
\]
Hence, gathering all the previous inequalities, going back to the function $d(c)$, we conclude that \[
d''(c)=-\dfrac{d}{dc}\left\{c\int_0^L\big(\psi_c^{\mathrm{sn}}\big)^2dx\right\}<0,
\]
for all $c\in(c_L,+\infty)$, where $c_L$ is defined in Proposition \ref{MT_SN_COMPLEX}. 

\medskip

Finally, we are in position to apply the stability result in \cite{GSS2}. In fact, first of all, we define the space $X_\mathrm{odd}^1$ as the space of functions belonging to $H^1(\T_L)\times L^2(\T_L)$  that are $(\mathrm{odd},\mathrm{odd})$ regarded as functions defined in the whole line. Then, by using Corollary \ref{final_spectral}, recalling that the first eigenfunction found in Proposition \ref{prop_sni_1} as well as the first two eigenfunctions found in Proposition \ref{op_sn_i_2} are the three even, we deduce that \[
n\left(\vec{\mathcal{L}}_\mathrm{sn}\big\vert_{X^1_{\mathrm{odd}}}\right)=0,\]
where $n(\cdot)$ stands for the number of negative eigenvalues of the operator. Moreover, we have that $\vec{\mathcal{L}}_\mathrm{sn}$ has exactly two zero-eigenvalues, more specifically, its kernel is spanned by \[
(\psi_{c,x}^{\mathrm{sn}},c\psi_{c,x}^{\mathrm{sn}},0,0) \quad \hbox{and}\quad (0,0,\psi_c^{\mathrm{sn}},c\psi_c^{\mathrm{sn}}),
\]
and the remaining part of the spectra is bounded away from zero. However, notice that only the last of these two zero-eigenfunctions belongs to $X^1_{\mathrm{odd}}$, and hence, the kernel of the operator $\vec{\mathcal{L}}_{\mathrm{sn}}$ restricted to $X^1_{\mathrm{odd}}$ is \textbf{simple}. On the other hand, we have just proved that $\mathfrak{p}(d''(c))=0$, where $\mathfrak{p}(\cdot)$ stands for the number of positive eigenvalues of $d''(c)$. Therefore we have \[
n\left(\vec{\mathcal{L}}_\mathrm{sn}\big\vert_{X^1_{\mathrm{odd}}}\right)=\mathfrak{p}(d''(c)),
\]
and hence, we conclude by applying the stability theorem in \cite{GSS2}. Specifically, we obtain the following result. 

\begin{thm}\label{stab_final}
Under the assumptions of Proposition \ref{MT_SN_COMPLEX}, the complex-valued snoidal wave solution given by \eqref{sn_complex_def} is orbitally stable in $X^1_{\mathrm{odd}}$ under the periodic flow of the $\phi^4$-equation.
\end{thm}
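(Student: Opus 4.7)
The plan is to apply the extended Grillakis-Shatah-Strauss framework of \cite{GSS2} on the invariant subspace $X^1_{\mathrm{odd}}$ of $(\mathrm{odd},\mathrm{odd})$ perturbations, exploiting the fact that the periodic $\phi^4$-flow preserves this oddness. The key identity to establish is
\[
n\!\left(\vec{\mathcal{L}}_{\mathrm{sn}}\big|_{X^1_{\mathrm{odd}}}\right) \;=\; \mathfrak{p}(d''(c)),
\]
where $d(c) := (\mathcal{E}-c\mathcal{F})(\vec{\psi}_c^{\mathrm{sn}})$, $n(\cdot)$ counts negative eigenvalues and $\mathfrak{p}(\cdot)$ counts positive ones, with the usual requirement that the restricted kernel of $\vec{\mathcal{L}}_{\mathrm{sn}}$ reduces to the gauge direction.

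First I would compute the left-hand side. Corollary \ref{final_spectral} says that $\vec{\mathcal{L}}_{\mathrm{sn}}$ carries three negative eigenvalues in total, one from $\vec{\mathcal{L}}_{\mathrm{sn},\mathcal{R}}$ and two from $\vec{\mathcal{L}}_{\mathrm{sn},\mathcal{I}}$. Inspecting the proofs of Propositions \ref{prop_sni_1} and \ref{op_sn_i_2}, the associated eigenfunctions come from the Lamé problem and are, respectively, $Y_0$, $\mathrm{dn}(\ell x;\kappa)$, and $\mathrm{cn}(\ell x;\kappa)$ — all \emph{even} on $[0,L)$. Hence restricting to $X^1_{\mathrm{odd}}$ kills every negative direction, so $n(\vec{\mathcal{L}}_{\mathrm{sn}}|_{X^1_{\mathrm{odd}}})=0$. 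A similar parity inspection shows the full kernel is spanned by the translation mode $(\psi_{c,x}^{\mathrm{sn}}, c\psi_{c,x}^{\mathrm{sn}},0,0)$, which is even since $\psi_c^{\mathrm{sn}}\propto \mathrm{sn}(\ell x;\kappa)$ is odd, and by the gauge mode $(0,0,\psi_c^{\mathrm{sn}}, c\psi_c^{\mathrm{sn}})$, which is odd. Restriction to $X^1_{\mathrm{odd}}$ preserves only the gauge mode, so the restricted kernel is simple, as required.

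Next I would analyse $d(c)$. Since $\vec{\psi}_c^{\mathrm{sn}}$ is a critical point of $\mathcal{E}-c\mathcal{F}$, one obtains $d'(c) = -c\int_0^L(\psi_c^{\mathrm{sn}})^2\,dx$. Using $\int_0^K\mathrm{sn}^2 = \kappa^{-2}(K-E)$ together with the period constraint $L=\tfrac{4\sqrt{2}}{\beta_1}K(\kappa)$ recasts $d'(c)$ as an explicit expression in $K(\kappa)$, $E(\kappa)$, and $c$. Differentiating once more yields three additive terms whose signs hinge on $\tfrac{d\kappa}{dc}$. To pin this down, I would differentiate both the period identity and the relation $\beta_2^2 = 2(1+c^2)-\beta_1^2$ in $c$, then solve the resulting linear $2\times 2$ system. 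The main obstacle here is purely computational: combining $K'>0$, the explicit form $\kappa=\beta_2/\beta_1$, and the admissible range $\beta_1\in(\sqrt{\omega_c},\sqrt{2\omega_c})$ must be arranged so as to conclude $\tfrac{d\beta_1}{dc}>0$ and $\tfrac{d\kappa}{dc}>0$.

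Once $\tfrac{d\kappa}{dc}>0$ is established, each of the three terms in $d''(c)$ acquires a definite sign from the standard monotonicity properties $K>E$, $K'>0$ and $E'<0$ recorded in Section \ref{preliminaries}, giving $d''(c)<0$ for every $c\in(c_L,+\infty)$. Consequently $\mathfrak{p}(d''(c))=0=n(\vec{\mathcal{L}}_{\mathrm{sn}}|_{X^1_{\mathrm{odd}}})$, the kernel restriction is simple, and the abstract stability theorem of \cite{GSS2} then yields orbital stability of $\vec{\psi}_c^{\mathrm{sn}}$ in $X^1_{\mathrm{odd}}$, completing the proof.
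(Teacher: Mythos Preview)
Your proposal is correct and follows essentially the same route as the paper: you invoke the parity of the negative eigenfunctions (Propositions \ref{prop_sni_1}, \ref{op_sn_i_2} and Corollary \ref{final_spectral}) to obtain $n(\vec{\mathcal{L}}_{\mathrm{sn}}|_{X^1_{\mathrm{odd}}})=0$ with simple restricted kernel, and you compute $d''(c)<0$ via the same period/elliptic-integral identities and the sign of $\tfrac{d\kappa}{dc}$, concluding by the stability criterion of \cite{GSS2}. The only cosmetic difference is that you treat the spectral count before the convexity of $d$, whereas the paper does the reverse.
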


\medskip

\section{Appendix}

\subsection{Proof of Proposition \ref{MT_SN_CURVE}}\label{app_dn_sb_case}

In fact, first of all notice that for $c$ fixed, $T_{\mathrm{sb}}$ as in \eqref{sb_period} regarded as a function of $\beta_1$ satisfies $T_{\mathrm{sb}}((1,\sqrt{2}))=(2\pi\sqrt{\omega_{\mathrm{sb}}},+\infty)$. Moreover, notice that due to condition \eqref{csb_condition} we have the bound $2\pi\sqrt{\omega_{\mathrm{sb}}}<L$. Then, as an application of the Implicit Function Theorem, in order to conclude the uniqueness of $\beta_1(c)$ it is enough to show that $\tfrac{d}{d\beta_1}T_{\mathrm{sb}}<0$. Indeed, notice that by direct differentiation of the equation defining $\kappa$ in  \eqref{sn_sol_def} with respect to $\beta_1$ we have\begin{align*}
\dfrac{d\kappa}{d\beta_1}=-\dfrac{2}{\beta_1^2\sqrt{2-\beta_1^2}}<0 \quad \hbox{for all } \,\beta_1\in(1,\sqrt{2}). 
\end{align*} 
Therefore, recalling that $K(\kappa)$ is an strictly increasing function, and due to the sign of $\kappa'$ given by the latter inequality, by differentiating relation \eqref{sb_period} with respect to $\beta_1$ we conclude 
\begin{align}
\dfrac{d}{d\beta_1}T_{\mathrm{sb}}=-\dfrac{4\sqrt{2\omega_{sb}}}{\beta_1^2}K+\dfrac{4\sqrt{2\omega_\mathrm{sb}}}{\beta_1}K'(\kappa)\dfrac{d\kappa}{d\beta_1}<0,
\end{align} 
what finish the proof.\qed

\bigskip

\textbf{Acknowledgements :} The author is grateful to professor Fabio Natali for pointing out a flaw in the proof of Theorem \ref{stab_final} in a previous version of this work.


\begin{thebibliography}{99}

\smallskip


\bibitem{AlMuPa2} M. A. Alejo, C. Mu\~noz, J. M. Palacios, \emph{On the variational structure of breather solutions I: Sine-Gordon equation}, J. Math. Anal. Appl. 453 (2017), no. 2, 1111–1138.

\bibitem{AlMuPa} M. A. Alejo, C. Mu\~noz, J. M. Palacios, \emph{On the variational structure of breather solutions II: Periodic mKdV equation}. Electron. J. Differential Equations 2017, Paper No. 56, 26 pp.

\bibitem{AlMuPa3} M. A. Alejo, C. Mu\~noz, J. M. Palacios, \emph{On the asymptotic stability of the sine-Gordon kink in the energy space}, preprint arXiv:2003.09358. 

\bibitem{An} J. Angulo Pava, \emph{Nonlinear stability of periodic traveling wave solutions to the Schr\"odinger and the modified Korteweg-de Vries equations}. J. Differential Equations 235 (2007), no. 1, 1–30.

\bibitem{AnNa} J. Angulo Pava, F. Natali, \emph{Positivity properties of the Fourier transform and the stability of periodic travelling-wave solutions}, SIAM J. Math. Anal. 40 (2008), no. 3, 1123–1151.

\bibitem{AnNa2} J. Angulo, F.  Natali,\emph{(Non)linear instability of periodic traveling waves: Klein-Gordon and KdV type equations}. Adv. Nonlinear Anal. 3 (2014), no. 2, 95–123.

\bibitem{BiFr} P. Byrd, M. Friedman, \emph{Handbook of Elliptic Integrals for Engineers and Scientists}, second ed., Springer-Verlag,
New York, 1971.

\bibitem{Cu} S. Cuccagna, \emph{On asymptotic stability in 3D of kinks for the $\phi^4$ model}, Trans. Amer. Math. Soc. 360 (2008), no. 5, 2581–2614.

\bibitem{DeKa} B. Deconinck, T. Kapitula, \emph{The orbital stability of the cnoidal waves of the Korteweg-de Vries equation}. Phys. Lett. A 374 (2010), no. 39, 4018–4022.

\bibitem{DeMc} B. Deconinck, P. McGill, B. Segal, \emph{The stability spectrum for elliptic solutions to the sine-Gordon equation}. Phys. D 360 (2017), 17–35.

\bibitem{DeUp} B. Deconinck, J. Upsal, \emph{The orbital stability of elliptic solutions of the focusing nonlinear Schrödinger equation}, SIAM J. Math. Anal. 52 (2020), no. 1, 1–41.

\bibitem{DeNi} B. Deconinck, M. Nivala, \emph{The stability analysis of the periodic traveling wave solutions of the mKdV equation}, Stud. Appl. Math. 126 (2011), no. 1, 17–48.

\bibitem{De} J-M Delort, \emph{Existence globale et comportement asymptotique pour l'\'equation de Klein-Gordon quasi lin\'eaire \`a donn\'ees petites en dimension 1}, Ann. Sci. Ecole Norm. Sup. ´ 34(4) (2001) pp. 1–61.

\bibitem{GSS} M. Grillakis, J. Shatah, W. Strauss, \emph{Stability theory of solitary waves in the presence of symmetry I}, J. Funct. Anal. 74 (1987), no. 1, 160–197.

\bibitem{GSS2} M. Grillakis, J. Shatah, W. Strauss, \emph{Stability theory of solitary waves in the presence of symmetry II}, J. Funct. Anal. 94 (1990), no. 2, 308–348.

\bibitem{HaNa} N. Hayashi, P. Naumkin, \emph{The initial value problem for the cubic nonlinear Klein-Gordon equation}, Z. Angew. Math. Phys. 59 (2008), no. 6, 1002–1028.

\bibitem{HPW} D. Henry, J.  Perez, W. Wreszinski, \emph{Stability theory for solitary-wave solutions of scalar field equations}, Comm. Math. Phys. 85 (1982), no. 3, 351–361

\bibitem{In} E. Ince, \emph{The periodic Lam\'e functions}, Proc. Roy. Soc. Edinburgh 60 (1940) 47–63.

\bibitem{JoMaMiPl} C. Jones, R. Marangell, P. Miller, R. Plaza, \emph{Spectral and modulational stability of periodic wavetrains for the nonlinear Klein-Gordon equation}, J. Differential Equations 257 (2014), no. 12, 4632–4703.

\bibitem{JoMaMiPl2} C. Jones, R. Marangell, P. Miller, R. Plaza, \emph{On the stability analysis of periodic sine-Gordon traveling waves}, Phys. D 251 (2013), 63–74.


\bibitem{JoMaMiPl3} C. Jones, R. Marangell, P. Miller, R. Plaza, \emph{On the spectral and modulational stability of periodic wavetrains for nonlinear Klein-Gordon equations}. Bull. Braz. Math. Soc. (N.S.) 47 (2016), no. 2, 417–429.


\bibitem{Kato} T. Kato, \emph{ Quasi-Linear Equations of Evolution with Applications to Partial Differential Equations}, Lecture Notes in Math., vol. 448, Springer, 1975, pp. 25–
70.


\bibitem{Kl} S. Klainerman,
\emph{Global existence of small amplitude solutions to nonlinear Klein-Gordon equations in four space-time dimensions}, Comm. Pure Appl. Math. 38 (1985), no. 5, 631–641.

\bibitem{Kl2} Klainerman, Sergiu
\emph{Global existence for nonlinear wave equations}. Comm. Pure Appl. Math. 33 (1980), no. 1, 43–101.

\bibitem{KMM} M. Kowalczyk,  Y. Martel, C. Mu\~noz, 
\emph{Kink dynamics in the $\phi^4$ model: asymptotic stability for odd perturbations in the energy space}. J. Amer. Math. Soc. 30 (2017), no. 3, 769–798. 

\bibitem{KMM2} M. Kowalczyk, Y. Martel, C. Mu\~noz, \emph{Nonexistence of small, odd breathers for a class of nonlinear wave equations}, Lett. Math. Phys. 107 (2017), no. 5, 921–931.


\bibitem{LiSo} H. Lindblad, A. Soffer, \emph{Scattering for the Klein-Gordon equation with quadratic and variable coefficient cubic nonlinearities}. Trans. Amer. Math. Soc. 367 (2015), no. 12, 8861–8909.

\bibitem{MaWi} W. Magnus, S. Winkler, \emph{Hill’s Equation}, Tracts Pure Appl. Math., vol. 20, Wiley, New York, 1976.


\bibitem{MaPa} N. Manton, P. Sutcliffe, \emph{Topological solitons}, Cambridge Monographs on Mathematical
Physics, Cambridge University Press, Cambridge, 2004.

\bibitem{MP} C. Mu\~noz, J. M. Palacios,\emph{Nonlinear stability of 2-solitons of the sine-Gordon equation in the energy space}, Ann. Inst. H. Poincaré Anal. Non Linéaire 36 (2019), no. 4, 977–1034.

\bibitem{NaCa} F. Natali, E. Cardoso, \emph{Stability properties of periodic waves for the Klein-Gordon equation with quintic nonlinearity}, Appl. Math. Comput. 224 (2013), 581–592.

\bibitem{NaPa} F. Natali, A. Pastor Ferreira, \emph{Stability and instability of periodic standing wave solutions for some Klein-Gordon equations},
J. Math. Anal. Appl. 347 (2008), no. 2, 428–441.


\bibitem{PeSc} M. Peskin, D. Schroeder, \emph{An introduction to quantum field theory}, Addison-Wesley Publishing Company, Advanced Book Program, Reading, MA, 1995




\bibitem{RS} M. Reed, B. Simon, \emph{Methods of modern mathematical physics IV, Analysis of operators}. Academic Press, 1978.

\bibitem{Ri} M. J. Rice, Phys. Lett. A 71,152 (1979).

\bibitem{ShSt} J. Shatah, \emph{Stable standing waves of nonlinear Klein-Gordon equations}, Comm. Math. Phys. 91 (1983), no. 3, 313–327.

\bibitem{ShSt2} J. Shatah, W. Strauss, \emph{Instability of nonlinear bound states}. Comm. Math. Phys. 100 (1985), no. 2, 173–190.

\bibitem{St} J. Sterbenz, \emph{Dispersive decay for the 1D Klein-Gordon equation with variable coefficient nonlinearities}. Trans. Amer. Math. Soc. 368 (2016), no. 3, 2081–2113. 

\bibitem{Va} T. Vachaspati, \emph{Kinks and domain walls}, Cambridge University Press, New York, 2006.
An introduction to classical and quantum solitons.


\end{thebibliography}
\end{document}